\documentclass{amsart}

\usepackage{amssymb}
\usepackage{eucal}
\usepackage{color}
\usepackage{arydshln}
\usepackage{verbatim}

\usepackage{multirow,bigdelim}
\usepackage{amsfonts}
\usepackage{dsfont}
\usepackage{amsmath}
\usepackage{mathrsfs}
\usepackage{todonotes}
\usepackage{hyperref}
\usepackage{multimedia}
\usepackage{graphicx}
\usepackage{indentfirst}
\usepackage{bm}
\usepackage{amsthm}
\usepackage{mathrsfs}
\usepackage{latexsym}
\usepackage{amsmath}
\usepackage{amssymb}
\usepackage{microtype}
\usepackage{relsize}
\usepackage{hyperref}
\usepackage{tikz-cd}
\usepackage[backend=biber,style=numeric,autocite=superscript]{biblatex}
\allowdisplaybreaks[4]

\numberwithin{equation}{subsection}

\DeclareSymbolFont{SY}{U}{psy}{m}{n}
\DeclareMathSymbol{\emptyset}{\mathord}{SY}{'306}

\theoremstyle{plain}

\newtheorem{theorem}{Theorem}[section]
\newtheorem{corollary}[theorem]{Corollary}
\newtheorem{lemma}[theorem]{Lemma}
\newtheorem{proposition}[theorem]{Proposition}
\theoremstyle{definition}
\newtheorem{definition}[theorem]{Definition}
\newtheorem{remark}[theorem]{Remark}
\newtheorem{example}[theorem]{Example}

\numberwithin{equation}{section}

\makeatletter
\@namedef{subjclassname@2020}{\textup{2020} Mathematics Subject Classification}
\makeatother
\begin{document}

	\title[Geometric operators and the third problem of Halmos]{Geometric operators and the third problem of Halmos}

\author{ KUI JI, CHUNLAN JIANG, and ZHENYUE WANG }

\thanks{}

\subjclass[2020]{}

\curraddr[ ]{School of Mathematical Sciences, Hebei Normal University,
	Shijiazhuang, Hebei 050024, China}

\email[]{}

\email[K.Ji]{jikui@hebtu.edu.cn} \email[C.L.Jiang]{cljiang@hebtu.edu.cn} \email[Z.Y.Wang]{wangzhenyuemath@126.com}

\begin{abstract}
	In 1970, P. R. Halmos raised ten open problems on operator theory. The third problem of Halmos is the following:  if an intransitive operator has an inverse, is its inverse also intransitive?
	 This question is reduced to the case of conjugate bi‑geometric operators in \cite{fang2026invariantII}. In the framework of conjugate bi-geometric operators, using tools from complex geometry, operator theory, and function theory, this paper gives an affirmative answer to the third problem of Halmos in the following cases:
	\begin{enumerate}
		\item[(a)] bundle-preserving operators;
		\item[(b)] operators satisfying suitable subexponential
		power growth conditions.
	\end{enumerate}
	These two cases cover a large class of operators.
	
\end{abstract}
\maketitle

\noindent \textbf{Keywords}: conjugate bi‑geometric operator, intransitive operator, Cowen‑Douglas operator, invariant subspace

\section{Introduction}

Let $\mathcal{H}$ be a complex separable infinite‑dimensional Hilbert space. Denote by $\mathcal{B}(\mathcal{H})$ the set of all bounded linear operators on $\mathcal{H}$. An operator $T\in\mathcal{B}(\mathcal{H})$ is said to be {\it intransitive} if it admits an invariant subspace different from $\{0\}$ and the whole space $\mathcal{H}$; otherwise, $T$ is called a {\it transitive operator}.

\medskip

{\bf Invariant subspace problem:} Given $T\in\mathcal{B}(\mathcal{H})$, does $T$ possess a nontrivial invariant subspace?

\medskip

The invariant subspace problem has been proposed for more than a century, and it remains an open problem to this day.

In the 1930s, von Neumann proposed an elegant method for constructing invariant subspaces of compact operators by approximating with invariant subspaces of a sequence of finite‑rank operators.
In 1954, Aronszajn and Smith generalized the above result to separable Banach spaces \cite{3aronszajn1954invariant}. Wermer \cite{wermer1952existence} proved that every invertible operator \(T\) satisfying
\[
\sum_{n=-\infty}^{\infty}\frac{\log\|T^n\|}{1+n^2}<\infty
\]
and whose spectrum consists of more than one point possesses a nontrivial hyperinvariant subspace.

In 1975, Enflo constructed an operator on a Banach space that possesses no nontrivial invariant subspaces \cite{14enflo1987invariant}\cite{7beauzamy1985operateur}.
During the 1980s, Read further produced a large family of operators without nontrivial invariant subspaces on several non‑reflexive Banach spaces \cite{33read1985l1,34read1986shortproof,35read1988hypercyclic,32read1984solution}.

Closely related to the famous invariant subspace problem is the third problem of Halmos. In 1970, P.~R.~Halmos raised ten open problems in operator theory \cite{halmos1970ten}, among which the third problem of Halmos is stated as follows.

\medskip

{\bf Problem 3:} If an intransitive operator has an inverse, is its inverse also intransitive?

 \vspace{1em}

R.~G.~Douglas, C.~Foias, and C.~Pearcy \cite{8douglas2005common} also pointed out that the third problem of Halmos is the last remaining open problem among Halmos' ten problems.

Reference \cite{halmos1970ten} provides explicit citations for the solutions to all but two of Halmos's ten problems, namely Problems 3 and 6. Problem 6 has been solved by G. Pisier; see \cite{pisier1997polynomially}.

The Gelfand–Hille theorem \cite{gelfand1941char} states that for \(T\in B(X)\) with spectrum \(\sigma(T)=\{1\}\) and a positive integer \(r\), the equality \((T-I)^r=0\) holds if and only if \(\|T^n\|=o(n^r)\), or equivalently \(\|T^n\|=O(n^{r-1})\), as \(|n|\to\infty\). This is of considerable help in addressing Halmos's third problem.

The difficulty in solving the third problem of Halmos is that we only have the 
information that T is invertible. Therefore, to answer this problem aﬃrmatively, 
we need to give the operator $T$ more structure. 

The authors of \cite{fang2026invariantII} made important progress on the third problem of Halmos and introduced the notion of conjugate bi-geometric operators, whose definition is given below:

Let $T\in\mathcal{B}(\mathcal{H})$. We say that $T$ has a conjugate bi‑geometric form, if there is a nontrivial subspace $M$ such that
\[
T=
\begin{pmatrix}
	T_1 & T_{12}\\
	0 & T_2
\end{pmatrix}\!\!\!\!
\begin{matrix}
	M\\
	\ \ M^{\perp}
\end{matrix}
\]
where $T_1^*, T_2\in B_1(O_\delta)$ and $O_\delta\triangleq\{z\in\mathbb{C};\,|z|<\delta\}$ for some $\delta>0$.

And they obtain the following theorem:

\begin{theorem}[Reduction of Halmos' third problem]\cite{fang2026invariantII} 
	\label{thm:1.1}
	The following two statements are equivalent.
	\begin{enumerate}
		\item For any invertible operator $T\in\mathcal{B}(\mathcal{H})$ with a nontrivial invariant subspace, $T^{-1}$ has a nontrivial invariant subspace.
		
		\item For any invertible operator $T\in\mathcal{B}(\mathcal{H})$ with a conjugate bi‑geometric form, $T^{-1}$ has a nontrivial invariant subspace.
	\end{enumerate}
\end{theorem}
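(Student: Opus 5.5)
The implication (1)$\Rightarrow$(2) is immediate: if $T$ has a conjugate bi‑geometric form with respect to $M$, then $M$ is a nontrivial invariant subspace of $T$, so (1) applies. For (2)$\Rightarrow$(1), I will fix an invertible $T$ with a nontrivial invariant subspace $M$ and argue by contradiction, assuming $T^{-1}$ is transitive. The aim is to produce a (possibly different) nontrivial subspace $N$ with respect to which $T$ has a conjugate bi‑geometric form. Then (2) gives the contradiction.

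\textbf{First reductions.} If $TM=M$, then $T^{-1}M=M$ and we are done, so assume $TM\subsetneq M$. Every operator commuting with $T$ commutes with $T^{-1}$. Hence $\ker(T-\lambda)$ and $\overline{\operatorname{ran}(T-\lambda)}$ are $T^{-1}$‑invariant, and transitivity of $T^{-1}$ forces $\sigma_p(T)=\sigma_p(T^*)=\emptyset$.

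Next I reduce the codimension of $TM$ to one. Every closed $N$ with $TM\subseteq N\subseteq M$ is $T$‑invariant, because $TN\subseteq TM\subseteq N$. Pick a unit vector $e\in M\ominus TM$ and set $N=TM\oplus\mathbb{C}e$; this is nontrivial since $0\neq N\subseteq M$. Write $T=\begin{pmatrix}T_1&T_{12}\\0&T_2\end{pmatrix}$ on $N\oplus N^\perp$. Then $T_1$ is bounded below, because $T$ is, and $\operatorname{ran}T_1=TN\subseteq TM$ has codimension one in $N$, since $TM$ is closed and $T$ is injective. Thus $T_1$ is Fredholm of index $-1$, and $T_1^*$ is surjective with $\dim\ker T_1^*=1$. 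Since $T$ is invertible, index additivity makes $T_2$ Fredholm of index $1$. Moreover $T_2^*=T^*|_{N^\perp}$, so $T_2^*-\lambda$ is injective for all $\lambda$ by $\sigma_p(T^*)=\emptyset$; hence $T_2-\lambda$ has dense range. Stability of the Fredholm index then gives $\delta>0$ such that for $|\lambda|<\delta$:
\begin{itemize}
\item $T_1^*-\lambda$ is surjective with one‑dimensional kernel;
\item $T_2-\lambda$ is surjective with one‑dimensional kernel.
\end{itemize}
For the first item, injectivity of $T_1-\lambda$ follows from $\sigma_p(T)=\emptyset$.

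\textbf{Main obstacle: the spanning condition.} It remains to show that $\bigvee_{|\lambda|<\delta}\ker(T_1^*-\lambda)=N$ and $\bigvee_{|\lambda|<\delta}\ker(T_2-\lambda)=N^\perp$. Let $K_2=\bigvee\ker(T_2-\lambda)\subseteq N^\perp$. This space is $T_2$‑invariant, so $N\oplus K_2$ is $T$‑invariant. I would first try to use the transitivity of $T^{-1}$ to rule out $K_2\neq N^\perp$. Failing that, I would replace $N^\perp$ by $K_2$ and restrict $T$ to the invariant subspace $N\oplus K_2$. The difficulty is that $T$ restricted to a subspace is no longer invertible. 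So I would instead enlarge $N$ to $N'=N\oplus(N^\perp\ominus K_2)$, which is invariant for $T$ because $K_2^\perp\cap N^\perp$ is $T_2^*$‑coinvariant, that is, the compression of $T_2$ to it is upper triangular in the right order. I would then recheck the Fredholm bookkeeping: the kernel of the compressed $T_2-\lambda$ now lies in $K_2$, which is spanned by eigenvectors.

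The symmetric argument handles $K_1=\bigvee\ker(T_1^*-\lambda)\subseteq N$. This space is $T_1^*$‑invariant, so $N\ominus K_1$ is $T$‑invariant, and one replaces $N$ by $K_1\oplus(\text{earlier part})$ accordingly. The holomorphic eigenvector sections guarantee $K_1\neq0$ and $K_2\neq0$, which keeps the final subspace nontrivial.

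I expect this spanning and re‑choice step to be the hardest. The care needed is to ensure that after the modifications, the compressions still have index exactly $\mp1$ near $0$, using that $T$ itself stays invertible on $\mathcal H$. Once that is arranged, $T_1^*,T_2\in B_1(O_\delta)$, so $T$ has a conjugate bi‑geometric form and (2) contradicts the transitivity of $T^{-1}$.
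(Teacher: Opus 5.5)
Your direction (1)$\Rightarrow$(2), the reduction to $TM\neq M$, and the Fredholm bookkeeping for $T_2$ (once $T_1$ has range of codimension one) are fine. The construction of $N$, however, does not give that codimension. With $N=TM\oplus\mathbb{C}e$, the range of $T_1$ is $TN=T^2M+\mathbb{C}Te$, not $TM$. Since $T$ maps $M$ bijectively onto $TM$ and $N$ onto $TN$, we have $TM/TN\cong M/N$, so $\dim N/TN=1+\dim M/N=\dim(M\ominus TM)$. For instance, if $T$ is the bilateral shift of multiplicity two and $M=H^2\oplus H^2$, then $N/TN$ is two-dimensional. If $M\ominus TM$ is infinite-dimensional, $T_1$ is not even Fredholm. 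The correct choice is the cyclic subspace $N=\bigvee_{n\ge0}T^ne$. Then $TN=\bigvee_{n\ge1}T^ne$ is closed and $N=\mathbb{C}e\dotplus TN$, with $e\notin TN$ because $TN\subseteq TM\perp e$. So $T_1$ is bounded below with range of codimension exactly one.

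The spanning step is the heart of the reduction, and your proposal leaves it open. The fallback you sketch also fails: $N\oplus(N^\perp\ominus K_2)=\mathcal{H}\ominus K_2$ is $T$-invariant only if $K_2$ is $T^*$-invariant, i.e. $T_2^*K_2\subseteq K_2$, whereas you only know $T_2K_2\subseteq K_2$. The missing idea is that transitivity of $T^{-1}$ gives spanning directly, with no re-choice of $N$. Let $R$ be a right inverse of $T_1^*$ and $\ker T_1^*=\mathbb{C}g_0$. Expanding the eigenvector $\sum_n\lambda^nR^ng_0$ of $T_1^*$ in powers of $\lambda$ shows that the orthogonal complement of $\bigvee_{|\lambda|<\delta}\ker(T_1^*-\lambda)$ in $N$ is $L=\bigcap_k\operatorname{ran}T_1^k=\bigcap_kT^kN$. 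Since $T$ is bijective and $T^kN$ decreases, $TL=L$, hence $T^{-1}L=L$. As $L\subseteq N\neq\mathcal{H}$, transitivity forces $L=0$. Dually, the orthogonal complement of $K_2$ in $N^\perp$ is $L'=\bigcap_kT^{*k}N^\perp$, which satisfies $T^*L'=L'$. Thus $L'^\perp$ is invariant under $T^{-1}=((T^*)^{-1})^*$ and contains $N\neq0$, so $L'=0$. Incidentally, once $N$ is cyclic you do not need $\sigma_p(T)=\sigma_p(T^*)=\emptyset$: bounded-belowness of $T_1$ and surjectivity of $T_2$ persist for small $|\lambda|$, which already yields one-dimensional kernels for $T_1^*-\lambda$ and $T_2-\lambda$.
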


Therefore, in this paper, we mainly consider when the conjugate bi-geometric operators (\cite{fang2026invariantII} Definition 5.1) provide an affirmative answer to Halmos's third problem.

In the paper \cite{fang2026invariantII}, the authors showed that the third problem of Halmos can be reduced to the question of whether the inverse of a conjugate bi-geometric operator $T$ has a nontrivial invariant subspace, and implied through concrete examples that if a conjugate bi-geometric operator possesses the property of preserving bundles, then the answer to Halmos' third problem is affirmative. Motivated by this significant progress, we explicitly prove the following theorem.

\noindent\textbf{ THEOREM A} (Theorem \ref{thm:3.7}): Let $T$ be an invertible conjugate bi-geometric operator in $\mathcal{B}(\mathcal{H})$. If $T$ has the bundle-preserving property (see Definition \ref{defin1}), then $T^{-1}$ is intransitive.

\noindent\textbf { THEOREM B} (Theorem \ref{dingli00}): Let
\[
T=\begin{pmatrix}
	A & T_{12}\\
	0 & B
\end{pmatrix}\in \mathcal{B}(\mathcal{H}_1\oplus\mathcal{H}_2),
\]
be an invertible conjugate bi-geometric operator with \(\sigma_{e}(T)=\mathbb{T}\). 
If there exist constants \(M,b>0\) and \(\delta\in (0,1)\) such that
\[
\max\big\{\|A^n\|,\ \|B^n\|,\ \|(A^{*}A)^{-n}\|,\ \|(BB^{*})^{-n}\|\big\}\le M e^{b n^\delta},\quad n\in\mathbb{N}_0,
\]
then \(T\), \(T^{-1}\) have nontrivial hyperinvariant subspaces and $A^{*},B\in B_1(\mathbb{D})$.

\noindent\textbf{THEOREM C} (Theorem \ref{fujihe}): Let
\[
T=\begin{pmatrix}
	A & T_{12}\\
	0 & B
\end{pmatrix}\in B(\mathcal{H}_{1}\oplus \mathcal{H}_{2}),
\]
and suppose that $T^{-1}$ is intransitive. Let
\[
\tilde{T}=\begin{pmatrix}
	\tilde{A} & \tilde{T}_{12}\\
	0 & \tilde{B}
\end{pmatrix}\in B(\tilde{\mathcal{H}}_{1}\oplus \tilde{\mathcal{H}_{2}}),
\]
where $A^{*},\tilde{A}^{*},B,\tilde{B}\in B_{1}(O_{\delta})$.
Denote by $S_{T}$ and $S_{\tilde{T}}$ the induced operators of $T$ and $\tilde{T}$, respectively.
Let $\gamma(w),\tilde{\gamma}(w)$ and $\delta(w),\tilde{\delta}(w)$ be spanning sections for $\ker(S_{T}-w),\ker(S_{\tilde{T}}-w)$ and $\ker(S_{T}^{*}-w),\ker(S_{\tilde{T}}^{*}-w)$, respectively.
If the following conditions hold:
\begin{enumerate}
	\item $\mathcal{K}_{A^{*}}(w)=\mathcal{K}_{\tilde{A}^{*}}(w)$.
	
	\item $\mathcal{K}_{B}(w)=\mathcal{K}_{\tilde{B}}(w)$.
	
	\item
	$
	\dfrac{\bigl|\langle\gamma(w_{1}),T_{12}\delta(\overline{w}_{2})\rangle\bigr|}
	{\|\gamma(w_{1})\|\,\|\delta(\overline{w}_{2})\|}
	=
	\dfrac{\bigl|\langle\tilde{\gamma}(w_{1}),\tilde{T}_{12}\tilde{\delta}(\overline{w}_{2})\rangle\bigr|}
	{\|\tilde{\gamma}(w_{1})\|\,\|\tilde{\delta}(\overline{w}_{2})\|},
	$
	for all $w,w_{1},w_{2}\in O_{\delta}$.
\end{enumerate}
then $\tilde{T}$ is invertible and $\tilde{T}^{-1}$ is intransitive.

This paper is structured as follows. In Section
2, we recall some basic definitions and knowledge about operator theory and complex geometry. In Section 3, we give an affirmative answer to the third problem of Halmos for conjugate bi-geometric operators with the bundle-preserving property. In Section 4, we provide an affirmative answer to Halmos's third problem for conjugate bi-geometric operators whose diagonal blocks, along with their standard left and right inverses, have subexponential power growth. In Section 5, we characterize the scope of the class of operators providing an affirmative answer to Halmos's third problem with the aid of complex geometric methods. In Section 6, we show that \(T^{-1}\) cannot be triangularized by an invertible operator that is upper triangular with respect to the original decomposition. This does not rule out triangularization by a general similarity transformation. Theorems \ref{thm:3.7} and \ref{dingli00} provide sufficient conditions for the intransitivity of \(T^{-1}\).

\section{Preliminaries}

This section introduces several classes of operators related to complex geometry, together with some geometric invariants of operators. The results in the sequel are based on these  basic concepts (\cite{Conway:2000,IJMJjiang2025cowen,article2,cowen1983operators,article2,fang2025invariant,wells2008differential,wendland2005,chahbi2014linear,fang2026invariantII}).

In \cite{article2}, M.~J.~Cowen and R.~G.~Douglas introduced a class of operators, denoted by $B_n(\Omega)$, whose eigenvalues with constant multiplicity $n$ are taken over a domain $\Omega\subset\mathbb{C}$.
The Cowen--Douglas class $B_n(\Omega)$ of operators of rank $n$ is defined as follows \cite{article2}:
\begin{enumerate}
	\item $\Omega\subset \sigma(T)=\{z\in\mathbb{C}\mid T-z \text{ is not invertible}\}$;
	\item $\operatorname{ran}(T-z)=\mathcal{H}$ for all $z\in\Omega$;
	\item $\bigvee_{z\in\Omega}\ker(T-z)=\mathcal{H}$;
	\item $\dim\ker(T-z)=n$ for all $z\in\Omega$.
\end{enumerate}

\begin{definition}\label{def:2.1}\cite{douglas1998}
	Let $T$ be an operator on a Hilbert space $\mathcal{H}$.
	If the range of $T$ is finite‑dimensional, then $T$ is called a 
	\textit{finite‑rank operator}.
	If $T$ maps the unit ball of $\mathcal{H}$ into a compact set, then $T$ is called a \textit{compact operator}.
	Denote by $\mathfrak{F}(\mathcal{H})$ and $\mathcal{K}(\mathcal{H})$ the sets of all finite‑rank operators and compact operators on $\mathcal{H}$, respectively.
\end{definition}

\begin{definition}\label{def:2.2}\cite{douglas1998}
	Let $\mathcal{H}$ be a Hilbert space. The quotient algebra $\mathcal{B}(\mathcal{H})/\mathcal{K}(\mathcal{H})$ is called the \textit{Calkin algebra}.
	Let $\pi$ denote the canonical homomorphism from $\mathcal{B}(\mathcal{H})$ onto $\mathcal{B}(\mathcal{H})/\mathcal{K}(\mathcal{H})$.
	For $T\in\mathcal{B}(\mathcal{H})$, the spectrum of $\pi(T)$ in the Calkin algebra is called the \textit{essential spectrum} of $T$, denoted by $\sigma_e(T)$.
\end{definition}

\begin{definition}\label{def:2.3}\cite{douglas1998}
	Let $\mathcal{H}$ be a Hilbert space. An operator $T\in\mathcal{B}(\mathcal{H})$ is said to be a \textit{Fredholm operator} if $\pi(T)$ is invertible in the Calkin algebra $\mathcal{B}(\mathcal{H})/\mathcal{K}(\mathcal{H})$.
	Let $\mathcal{F}(\mathcal{H})$ stand for the set of all Fredholm operators on $\mathcal{H}$.
\end{definition}
\begin{definition}\cite{chahbi2014linear}
We say that an operator \(T\) is an \textit{essentially unitary operator}, if \(\pi(T)\) is a unitary element in \(\mathcal{B}(\mathcal{H})/\mathcal{K}(\mathcal{H})\).
\end{definition}
The following theorem provides the standard characterization of Fredholm operators.
\begin{theorem}[Atkinson]\label{thm:2.4}\cite{douglas1998}
	Let $\mathcal{H}$ be a Hilbert space. An operator $T\in\mathcal{B}(\mathcal{H})$ is Fredholm if and only if $\operatorname{ran}T$ is closed, and both $\ker T$ and $\ker T^*$ are finite‑dimensional.
\end{theorem}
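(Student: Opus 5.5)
The statement is Atkinson's theorem: $T$ is Fredholm exactly when $\operatorname{ran}T$ is closed and $\ker T$, $\ker T^*$ are finite-dimensional. I will prove the two implications separately, using only the definition of $\pi$ and the fact that $\mathcal{K}(\mathcal{H})$ is a closed two-sided ideal containing $\mathfrak{F}(\mathcal{H})$.

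For the direction ($\Leftarrow$), assume $\operatorname{ran}T$ is closed and both kernels are finite-dimensional. Since $\operatorname{ran}T$ is closed, $(\operatorname{ran}T)^{\perp}=\ker T^*$, so $\mathcal{H}=\operatorname{ran}T\oplus\ker T^*$. The restriction $T_0=T|_{(\ker T)^{\perp}}:(\ker T)^{\perp}\to\operatorname{ran}T$ is a bounded bijection between Hilbert spaces, so it has a bounded inverse by the open mapping theorem. Define $S\in\mathcal{B}(\mathcal{H})$ by $S=T_0^{-1}$ on $\operatorname{ran}T$ and $S=0$ on $\ker T^*$. Then $ST=I-P_{\ker T}$ and $TS=I-P_{\ker T^*}$, where the $P$'s are orthogonal projections of finite rank. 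Applying $\pi$ shows that $\pi(S)$ is a two-sided inverse of $\pi(T)$, so $T$ is Fredholm.

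For the direction ($\Rightarrow$), suppose $\pi(S)\pi(T)=\pi(T)\pi(S)=1$, so $ST=I+K_1$ and $TS=I+K_2$ with $K_1,K_2$ compact. The key step is to replace the compact errors by finite-rank ones. Choose $F\in\mathfrak{F}(\mathcal{H})$ with $\|K_1-F\|<1$; this uses approximation of compacts by finite-rank operators on Hilbert space. Then $I+K_1-F$ is invertible, and with $S'=(I+K_1-F)^{-1}S$ we get $S'T=I+F'$ where $F'=(I+K_1-F)^{-1}F$ has finite rank.

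Now $\ker T\subset\ker(I+F')\subset\operatorname{ran}F'$, so $\ker T$ is finite-dimensional. For the range, restrict to the closed finite-codimensional subspace $N=\ker F'$. On $N$ we have $S'T=I$, so $\|x\|\le\|S'\|\|Tx\|$ there, and hence $T(N)$ is closed. Writing $\mathcal{H}=N\oplus N^{\perp}$ with $\dim N^{\perp}<\infty$, we get $\operatorname{ran}T=T(N)+T(N^{\perp})$, which is a closed subspace plus a finite-dimensional one, and is therefore closed. The same argument applied to $T^*$, using $S^*T^*=I+K_2^*$, shows that $\ker T^*$ is finite-dimensional.

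I expect the main technical point to be this reduction from compact to finite-rank perturbations together with the "closed plus finite-dimensional is closed" lemma. Both are standard: the lemma follows by passing to the quotient by the closed subspace, where the image of the finite-dimensional space is closed.
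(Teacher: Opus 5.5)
Your argument is correct and complete. The paper gives no proof of this statement; it simply quotes Atkinson's theorem from Douglas's book, and what you wrote is essentially the standard textbook proof: a generalized inverse from the open mapping theorem gives ($\Leftarrow$), and for ($\Rightarrow$) you replace the compact error by a finite-rank one and observe that $T$ is bounded below on the finite-codimensional subspace $\ker F'$, which yields both $\dim\ker T<\infty$ and the closedness of $\operatorname{ran}T$, with $T^*$ handled symmetrically.
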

\begin{definition}\cite{wendland2005}
	A function \(\Phi : \mathbb{R}^s \to \mathbb{R}\) is called \textit{radial} provided there exists a univariate function \(\varphi : [0, \infty) \to \mathbb{R}\) such that
	\[
	\Phi(x) = \varphi(r), \quad \text{where} \quad r = \|x\|,
	\]
	and \(\|\cdot\|\) is some norm on \(\mathbb{R}^s\) -- usually the Euclidean norm.
\end{definition}

\begin{definition}\label{def:1.1}\cite{fang2026invariantII}
	Let $T\in\mathcal{B}(\mathcal{H})$. We say that $T$ has a \textit{conjugate bi‑geometric form}, if there is a nontrivial subspace $M$ such that
	\[
	T=
	\begin{pmatrix}
		T_1 & T_{12}\\
		0 & T_2
	\end{pmatrix}\!\!\!\!
	\begin{matrix}
		M\\
		\ \ M^{\perp}
	\end{matrix}
	\]
	where $T_1^*, T_2\in B_1(O_\delta)$ and $O_\delta\triangleq\{z\in\mathbb{C};\,|z|<\delta\}$ for some $\delta>0$.
\end{definition}

We will show that it suffices to consider invertible operators with conjugate bi‑geometric form for the third problem of Halmos.

\begin{definition}\cite{IJMJjiang2025cowen}
Let \(A\in B_{1}(\Omega)\) with \(0\in \Omega\). Then there exists an orthonormal basis \(\{e_{n}\}_{n=0}^{\infty}\) of \(\mathcal{H}\) such that 
\(e_{n-1}\in \ker A^{n}\ominus\ker A^{n-1}\) for \(n\ge 1\). If \(AB=I\) and 
\[
\langle Be_{j},e_{i}\rangle=\begin{cases}
	0, & i=0,\\[2mm]
	b_{i,\,j}, & i\ge 1,
\end{cases}
\qquad j\ge 0,
\]
then \(B\) is called the \textit{standard right inverse} of \(A\).
		Dually, let $A^*\in B_1(\Omega)$ with $0\in\Omega$.
		If $C^*$ is the standard right inverse of $A^*$,
		then $C$ is called the \textit{standard left inverse} of $A$.
\end{definition}

\begin{definition}\cite{cowen1983operators}
	Let $\mathscr{B}_{m,n}(\Omega)$ denote the class of bounded linear operators on $\mathscr{H}$ defined by

$	\mathscr{B}_{m,n}(\Omega)
	=\bigl\{T\in\mathscr{L}(\mathcal{H}):$
	\begin{align*}
		&(1)\ \dim\ker(T-\omega)=m \text{ and } \dim\ker(T-\omega)^{*}=n \text{ for all } \omega\in\Omega;\\
		&(2)\ (T-\omega)\mathcal{H} \text{ and } (T-\omega)^{*}\mathcal{H} \text{ are closed for all } \omega\in\Omega;\\
		 &(3)\ \mathcal{M}_T+\mathcal{N}_T=\mathcal{H},\ \text{where }
		\mathcal{M}_T=\bigvee_{\omega\in\Omega}\ker(T-\omega),\\
		&\mathcal{N}_T=\bigvee_{\omega\in\Omega}(\ker(T-\omega)^{*})\bigr\}.
	\end{align*}
	
\end{definition}

\begin{remark}\label{rem:2.7}
	Let
	\[
	T=\begin{pmatrix}
		T_1 & X\\
		0 & T_2
	\end{pmatrix},\quad
	\tilde{T}=\begin{pmatrix}
		\tilde{T}_1 & \tilde{X}\\
		0 & \tilde{T}_2
	\end{pmatrix}\in B_{1,1}(\Omega).
	\]
	Let $P_i(w)$ be the orthogonal projection onto $\ker(T - w)^i$ for $i=1,2$, and $Q_j(w)$ the orthogonal projection onto $\ker((T - w)^*)^{j}$ for $j=1,2$. Let $\gamma(w)$ and $\delta(w)$ be sections spanning $\ker(T - w)$ and $\ker(T^* - w)$, respectively. Then we have the following conclusion:

	If $T_1\sim_u \tilde{T}_1$ and $T_2\sim_u \tilde{T}_2$, then $T\sim_u \tilde{T}$ if and only if
	\[
	\frac{|\langle \gamma(\omega_1), X\delta(\overline{\omega}_2)\rangle|}
	{\|\gamma(\omega_1)\|\,\|\delta(\overline{\omega}_2)\|}
	=
	\frac{|\langle \tilde{\gamma}(\omega_1), \tilde{X}\tilde{\delta}(\overline{\omega}_2)\rangle|}
	{\|\tilde{\gamma}(\omega_1)\|\,\|\tilde{\delta}(\overline{\omega}_2)\|}
	\]
	for all $\omega_1,\omega_2\in \Omega$ \cite{cowen1983operators}.
\end{remark}

\begin{definition}\label{def:2.8}
	Let
	\[
	T=\begin{pmatrix}
		A & T_{12}\\
		0 & B
	\end{pmatrix}\in B(M\oplus M^{\perp}),
	\]
	where $A^{*},B\in B_1(O_\delta)$.
	Define its \textit{induced operator}
	\[
	S=\begin{pmatrix}
		S_1 & X\\
		0 & S_2
	\end{pmatrix}
	=\begin{pmatrix}
		A^{*} & T_{12}\\
		0 & B^{*}
	\end{pmatrix}.
	\]
\end{definition}

It is easy to verify that the induced operator $S$ of the conjugate bi-geometric operator $T$ satisfies $S\in B_{1,1}(O_\delta)$.

\begin{definition}\label{def:2.9}
	Let $\mathcal{H}$ be a Hilbert space and let $\{\xi_n\}_{n=1}^{\infty}$ and $\{\eta_n\}_{n=1}^{\infty}$ be two sequences of vectors in $\mathcal{H}$. We call $\{\xi_n\}_{n=1}^{\infty}$ and $\{\eta_n\}_{n=1}^{\infty}$ \textit{biorthogonal sequences} if
	\[
	\langle\xi_m,\eta_n\rangle=\delta_{m,n},\quad \forall\,m,n\in\mathbb{N}.
	\]
	If, furthermore, $\operatorname{\overline{Span}}\{\xi_n\}_{n=1}^{\infty}=\operatorname{\overline{Span}}\{\eta_n\}_{n=1}^{\infty}=\mathcal{H}$, we call $\{\xi_n\}_{n=1}^{\infty}$ and $\{\eta_n\}_{n=1}^{\infty}$ \textit{dual systems}.
\end{definition}
\begin{definition}
Let $T\in \mathcal{B}(\mathcal{H})$. If there exist $C_{1},C_{2}\ge 0$ and $\alpha\in (0,1)$ such that $\|T^{n}\|\le C_{1}e^{C_{2}n^{\alpha}}, n \in \mathbb{N}_{0}$, then $T$ is said to have \textit{subexponential power growth}.

\end{definition}

 In what follows, we provide an introduction to the complex-geometric content associated with the Cowen--Douglas class of operators.

 Let $\Omega \subset \mathbb C$ be a connected open set. Denote by $z = x + iy$ and $\bar z = x - iy$ the standard complex coordinates on $\Omega$, so that the cotangent bundle $T^*(\Omega)$ admits a local frame $\{dz, d\bar z\}$ on each fibre. Consequently, the exterior algebra bundle $\Lambda^p(T^*(\Omega))$ has a fibre basis consisting of $1$ (for $p=0$), $dz$ and $d\bar z$ (for $p=1$), and $dz \wedge d\bar z$ (for $p=2$) \cite{article2}.

Let $\mathcal E(\Omega)$ denote the algebra of complex-valued $C^\infty$ functions on $\Omega$, and let $\mathcal O(\Omega)$ be its subalgebra of holomorphic functions. For a $C^\infty$ vector bundle $F$ over $\Omega$, we write $\mathcal E(\Omega, F)$ for the space of $C^\infty$ sections of $F$; if $F$ is holomorphic, then $\mathcal O(\Omega, F)$ denotes the space of holomorphic sections.

Let $\mathcal E^p(\Omega) := \mathcal E\bigl(\Omega, \Lambda^p(T^*(\Omega))\bigr)$ denote the space of $C^\infty$ differential $p$-forms on $\Omega$. In particular, we have
\[
\mathcal E^0(\Omega) = \mathcal E(\Omega),\qquad
\mathcal E^1(\Omega) = \{\, f\,dz + g\,d\bar z : f,g \in \mathcal E(\Omega)\,\},\qquad
\mathcal E^2(\Omega) = \{\, f\,dz\wedge d\bar z : f \in \mathcal E(\Omega)\,\}.
\]
We split $\mathcal E^1(\Omega)$ as the direct sum $\mathcal E^{1,0}(\Omega) \oplus \mathcal E^{0,1}(\Omega)$, where
\[
\mathcal E^{1,0}(\Omega) = \{\, f\,dz : f \in \mathcal E(\Omega)\,\},\qquad
\mathcal E^{0,1}(\Omega) = \{\, g\,d\bar z : g \in \mathcal E(\Omega)\,\}.
\]
With the convention that $\mathcal E^{0,0}(\Omega) := \mathcal E(\Omega)$, $\mathcal E^{1,1}(\Omega) := \mathcal E^2(\Omega)$, and $\mathcal E^{2,0}(\Omega) = \mathcal E^{0,2}(\Omega) := \{0\}$, the exterior derivative $d : \mathcal E^p(\Omega) \to \mathcal E^{p+1}(\Omega)$ (for $p=0,1,2$) splits as
\[
d = \partial + \bar\partial,
\]
where $\partial : \mathcal E^{p,q}(\Omega) \to \mathcal E^{p+1,q}(\Omega)$ and $\bar\partial : \mathcal E^{p,q}(\Omega) \to \mathcal E^{p,q+1}(\Omega)$. Moreover, a function $f \in \mathcal E(\Omega)$ is holomorphic if and only if $\bar\partial f = 0$.

\begin{definition}\label{def:2.10}\cite{wells2008differential}
	Let $E\to \Omega$ be a vector bundle. A \textit{Hermitian metric} $h$ on $E$ assigns to each fibre $E_x$ of $E$ a Hermitian inner product $\langle{\cdot,\cdot}\rangle_x$, such that for any open subset $U\subset\Omega$ and any $\xi,\eta\in \mathcal{E}(U,E)$, the function
	\[
	\langle\xi,\eta\rangle\colon U\to \mathbb{C}
	\]
	defined by
	\[
	\langle\xi,\eta\rangle(x)=\langle\xi(x),\eta(x)\rangle_x
	\]
	is of class $C^\infty$.
\end{definition}

\begin{definition}\label{def:2.11}\cite{wells2008differential}
	Let $E\to\Omega$ be a vector bundle. A \textit{connection} $D$ on $E\to\Omega$ is a $\mathbb{C}$‑linear map
	\[
	D\colon \mathcal{E}(\Omega,E)\to \mathcal{E}^1(\Omega,E),
	\]
	satisfying
\[
		D(\varphi\xi)=d\varphi\cdot\xi+\varphi D\xi,
		\label{eq:connection-leibniz}
\]
	where $\varphi\in\mathcal{E}(\Omega)$ and $\xi\in\mathcal{E}(\Omega,E)$.
\end{definition}




\begin{definition}\cite{wells2008differential}
	Let $D$ be a connection on the vector bundle $E\to\Omega$. The \textit{curvature} $\Theta_E(D)$ is defined to be the element
	\[
	\Theta\in\mathcal{E}^2\bigl(\Omega,\operatorname{Hom}(E,E)\bigr),
	\]
	the $\mathbb{C}$‑linear map
	\[
	\Theta\colon \mathcal{E}(\Omega,E)\to\mathcal{E}^2(\Omega,E),
	\]
	which has the local representation with respect to a frame
	\[
	\Theta(f)=\Theta(D,f)=d\theta(f)+\theta(f)\wedge\theta(f).
	\]
\end{definition}

In \cite{article2}, the curvature of a connection is often defined as the $C^\infty$ section
\[
K(E,D) := D^2
\]
of the bundle $\operatorname{Hom}(E, E \otimes \Lambda^2(T^*\Omega))$. It will be convenient to introduce an associated curvature operator, denoted by $\mathcal K(E,D)$, which is a $C^\infty$ section of $\operatorname{Hom}(E, E)$ and is characterized by
\[K(E,D)\xi = \mathcal K(E,D)\xi \otimes (dz \wedge d\bar z)
\quad \text{for all } \xi \in \mathcal E(\Omega, E).
\]

Using the metric compatibility of the Chern connection $D$, for arbitrary sections $\xi, \tilde \xi \in \mathcal E(\Omega, E)$, we have $\langle D^2\xi,\tilde{\xi}\rangle+\langle\xi,D^2\tilde{\xi}\rangle=0.$  Hence we obtain the relation
\[
\langle \mathcal K \xi, \tilde\xi \rangle = \langle \xi, \mathcal K \tilde\xi \rangle,
\]
or equivalently, in terms of the $(1,1)$-form $dz \wedge d\bar z$,
\[
\langle \mathcal K \xi, \tilde\xi \rangle \, dz \wedge d\bar z
= \langle \xi, \mathcal K \tilde\xi \rangle \, dz \wedge d\bar z.
\]
Consequently, $\mathcal K$ is a self-adjoint operator. Moreover, we compute the curvature as follows. For $\xi \in \mathcal E(\Omega, E)$,
\[
K(E,D)\xi= \bar\partial\bigl(h^{-1}\partial h\bigr)\xi.
\] This may be written as
\[
\mathcal K(E,D)\xi(z)
= - \frac{\partial}{\partial \bar z}
\left( h(z)^{-1} \frac{\partial h(z)}{\partial z} \right) \xi(z),
\quad z \in \Omega.
\]

\begin{definition}\cite{wells2008differential}
	For an $\mathcal{S}$‑bundle $E\xrightarrow{\pi}\Omega$, an $\mathcal{S}$‑\textit{section} is an $\mathcal{S}$‑morphism $s\colon \Omega\to E$ satisfying
	\[
	\pi\circ s=\mathbf{1}_\Omega,
	\]
	where $\mathbf{1}_\Omega$ denotes the identity mapping on $\Omega$. That is, $s$ maps each point of the base space into the fibre over that point.
\end{definition}

\section{The third problem of Halmos for bundle-preserving operators}
Halmos's third problem remains open for a wide range of operator classes. In this section, we establish an affirmative answer for a broad class of operators, including cases for which the problem has not previously been resolved. Our main result is stated in Theorem \ref{thm:3.7}.

In view of Theorem \ref{thm:1.1}, we may restrict attention to an invertible operator of the form
\[
T=\begin{pmatrix}
	A & T_{12}\\
	0 & B
\end{pmatrix}\in \mathcal{B}(M\oplus M^\perp),
\]
where $A^{*},B\in B_1(O_\delta)$. It remains to show that $T^{-1}$ is intransitive.

A natural strategy for proving the existence of invariant subspaces of an operator is to determine the explicit form of the operator. For this purpose, it is necessary to characterize the inverse \(T^{-1}\) of a conjugate bi-geometric operator \(T\). The authors of \cite{fang2026invariantII} made substantial progress in this direction and established the following lemma, which provides a viable tool for proving the intransitivity of \(T^{-1}\).
	\begin{lemma}\label{keni}\cite{fang2026invariantII}
	Let \(T \in B(\mathcal{H} \oplus \mathcal{H})\) be an invertible conjugate bi-geometric operator given by
	\[
	T = \begin{bmatrix}
		A & T_{12} \\
		0 & B
	\end{bmatrix}.
	\]
	Choose nonzero vectors \(e_0 \in \ker A^*\) and \(f_0 \in \ker B\) such that \(\langle T_{12}f_0, e_0 \rangle = 1\). Let \(A_L^{-1}\) be the unique left inverse of \(A\) such that \(A A_L^{-1} = I - T_{12}f_0 \otimes e_0\). Let \(B_R^{-1}\) be the unique right inverse of \(B\) such that \(B_R^{-1}B = I - f_0 \otimes T_{12}^*e_0\). Then
	\[
	T^{-1} = \begin{bmatrix}
		A_L^{-1} & -A_L^{-1}T_{12}B_R^{-1} \\
		f_0 \otimes e_0 & B_R^{-1}
	\end{bmatrix}.
	\]
	
\end{lemma}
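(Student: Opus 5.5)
The plan is to construct $A_L^{-1}$ and $B_R^{-1}$ explicitly from the Cowen--Douglas structure of $A^*$ and $B$. Then I check by block multiplication that the displayed matrix is a right inverse of $T$. Since $T$ is invertible, a right inverse is automatically $T^{-1}$.

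\emph{Step 1: structural facts.} Since $A^*\in B_1(O_\delta)$ and $0\in O_\delta$, $A^*$ is surjective with $\dim\ker A^*=1$. Hence $A$ is injective with closed range and $\operatorname{ran}A=(\ker A^*)^\perp=\{e_0\}^\perp$. Likewise $B\in B_1(O_\delta)$ gives that $B$ is surjective with $\ker B=\operatorname{span}\{f_0\}$.

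Next I check that $\langle T_{12}f_0,e_0\rangle\neq 0$, so the normalization is possible. Suppose instead $T_{12}f_0\in\{e_0\}^\perp=\operatorname{ran}A$, say $T_{12}f_0=Ax$. Then $T(-x\oplus f_0)=(-Ax+T_{12}f_0)\oplus Bf_0=0$, which contradicts the injectivity of $T$. So after rescaling $f_0$ we may assume $\langle T_{12}f_0,e_0\rangle=1$. Throughout, $x\otimes y$ denotes $z\mapsto\langle z,y\rangle x$.

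\emph{Step 2: the one-sided inverses.}
\begin{itemize}
\item Put $P=I-T_{12}f_0\otimes e_0$. Because $\langle T_{12}f_0,e_0\rangle=1$, $P$ is an idempotent with $\operatorname{ran}P=\{e_0\}^\perp=\operatorname{ran}A$ and $\ker P=\operatorname{span}\{T_{12}f_0\}$.
\item Define $A_L^{-1}=A^{-1}P$. Here $A^{-1}\colon\operatorname{ran}A\to\mathcal H$ is bounded by the closed range theorem.
\item Then $AA_L^{-1}=P$. Also $A_L^{-1}A=I$, because $P$ fixes $\operatorname{ran}A$.
\item Uniqueness: $A$ is injective, so $AX=P$ determines $X$.
\end{itemize}

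\begin{itemize}
\item Put $Q=I-f_0\otimes T_{12}^*e_0$. Since $\langle f_0,T_{12}^*e_0\rangle=1$, $Q$ is an idempotent with $\ker Q=\operatorname{span}\{f_0\}=\ker B$ and $\operatorname{ran}Q=N:=\{T_{12}^*e_0\}^\perp$.
\item Because $\mathcal H=N\dotplus\ker B$ and $B$ is surjective, $B|_N\colon N\to\mathcal H$ is a bounded bijection.
\item Set $B_R^{-1}=(B|_N)^{-1}$, regarded as an operator into $\mathcal H$.
\item Then $BB_R^{-1}=I$, and $B_R^{-1}B$ is the projection onto $N$ along $\ker B$, which is exactly $Q$.
\item Uniqueness: $B$ is surjective, so $XB=Q$ determines $X$.
\end{itemize}

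\emph{Step 3: block verification.} Multiply $T$ by the proposed matrix and compare entries.
\begin{itemize}
\item Entry $(1,1)$: $AA_L^{-1}+T_{12}f_0\otimes e_0=P+(I-P)=I$.
\item Entry $(2,1)$: $B(f_0\otimes e_0)=Bf_0\otimes e_0=0$.
\item Entry $(2,2)$: $BB_R^{-1}=I$.
\item Entry $(1,2)$: this is the only nontrivial one. It equals
\[
-AA_L^{-1}T_{12}B_R^{-1}+T_{12}B_R^{-1}=(T_{12}f_0\otimes e_0)T_{12}B_R^{-1}=T_{12}f_0\otimes\bigl((B_R^{-1})^*T_{12}^*e_0\bigr).
\]
It vanishes because $\langle B_R^{-1}y,T_{12}^*e_0\rangle=0$ for every $y$, as $\operatorname{ran}B_R^{-1}=N$.
\end{itemize}
Hence the displayed matrix is a right inverse of the invertible operator $T$, and therefore equals $T^{-1}$.

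The main obstacle is this $(1,2)$ entry: it works only because both one-sided inverses are normalized against the same pairing $\langle T_{12}f_0,e_0\rangle=1$. Thus Step 1, which produces that normalization from the invertibility of $T$, is the essential input.
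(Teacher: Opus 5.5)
Your proof is correct. The idempotents $P$ and $Q$ have the stated ranges and kernels, $A_L^{-1}=A^{-1}P$ and $B_R^{-1}=(B|_N)^{-1}$ exist and are unique, and the $(1,2)$ entry vanishes because $\operatorname{ran}B_R^{-1}\subseteq\{T_{12}^*e_0\}^\perp$. You also justify the normalization $\langle T_{12}f_0,e_0\rangle=1$ from injectivity of $T$, which the statement takes for granted.

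The paper itself gives no proof of this lemma; it imports it from \cite{fang2026invariantII}. The closest computation in the paper is the check $T(W_0+Q)=I$ in the proof of Theorem~\ref{dingli00}, and your strategy is essentially that one: explicit one-sided inverses of the diagonal blocks, block multiplication giving a right inverse, then invertibility of $T$. There the paper uses the Moore--Penrose inverses $L=(A^*A)^{-1}A^*$ and $R=B^*(BB^*)^{-1}$ in place of your normalized oblique ones, and collects the difference into an explicit rank-one term $Q$.
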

We call \(A_L^{-1}\) and \(B_R^{-1}\) in the above lemma the left and right inverses of \(A\) and \(B\) associated with \(T\), respectively, and hereafter refer to them simply as the left and right inverses.

Before stating the main theorem of this section, we first introduce the definition of a bundle-preserving operator. To this end, we begin with some preliminaries.

The operator $B$ induces a holomorphic vector bundle by \cite{article2}, which we denote by $E_B$.  A direct computation shows that
\[
B\biggl(\sum_{n=0}^{\infty}\lambda^{n}(B_R^{-1})^{n}f_0\biggr)
=\lambda\sum_{n=0}^{\infty}\lambda^{n}(B_R^{-1})^{n}f_0.
\]
Accordingly, define a local holomorphic section of \(E_B\) by
\[
e_B(\lambda)=\sum_{n=0}^{\infty}\lambda^{n}(B_R^{-1})^{n}f_0,\qquad |\lambda|<\min\{\delta,\dfrac{1}{\|B_{R}^{-1}\|}\},
\]
where $f_0\in\ker B$.

The operator $A_L^{-1}$ also induces a holomorphic vector bundle $E_{A_L^{-1}}$ such that
\[
E_{A_L^{-1}}(\lambda)=\overline{\operatorname{Span}}\Bigl\{k(\sum_{n=0}^{\infty}\lambda^{n}A^{n}e_0'):k\in \mathbb{C}\Bigr\},
\]where $e_{0}^{\prime}\in \ker A_{L}^{-1}$.

Similarly, 
\[
A_L^{-1}\biggl(\sum_{n=0}^{\infty}\lambda^{n}A^{n}e_0'\biggr)
=\lambda\sum_{n=0}^{\infty}\lambda^{n}A^{n}e_0'.
\]

Accordingly, define
\[
e_{A_L^{-1}}(\lambda)=\sum_{n=0}^{\infty}\lambda^{n}A^{n}e_0',\qquad |\lambda|<\dfrac{1}{\|A\|}.
\]
Then $e_{A_L^{-1}}(\lambda)$ is a section of the bundle $E_{A_L^{-1}}$.

For any $\alpha \in (0,+\infty)$, the same argument yields a holomorphic vector bundle $E_{A_{L,\alpha}^{-1}}$ satisfying
\[
E_{A_{L,\alpha}^{-1}}(\lambda)=\operatorname{Span}\left\{k\left(\sum_{n=0}^{\infty}\alpha^n\lambda^n A^n e_0'\right):k\in\mathbb C\right\}, \qquad |\lambda|<\dfrac{1}{\alpha \|A\|}.
\]
Its section is given by
\[
e_{A_{L,\alpha}^{-1}}(\lambda)=\sum_{n=0}^{\infty}\alpha^n\lambda^n A^n e_0'.
\]
\begin{definition}(bundle-preserving operator)\label{defin1}
	Let 
	\[
	T=\begin{pmatrix}
		A & T_{12}\\
		0 & B
	\end{pmatrix}\in \mathcal{B}(M\oplus M^\perp),
	\]
	where $A^{*},B\in B_1(O_\delta)$. If there exists some $\alpha\in(0,+\infty)$ such that $T_{12}e_B(\lambda)=e_{A_{L,\alpha}^{-1}}(\lambda)$ for all $\lambda\in O_\delta$, then \(T\) is said to have the \textit{bundle-preserving property}; equivalently, \(T\) is called a \textit{bundle-preserving operator}.
\end{definition}

Before stating our main results, we need the following lemma.

\begin{lemma}
	\label{lem:3.4}
Let $A^{*}\in B_{1}(O_{\delta}).$ Suppose $A_L^{-1}A = I$, $0\ne e_0'\in\ker A_L^{-1}$ and $0\ne e_0\in\ker A^*$. Then $\langle e_0',e_0\rangle\neq 0$.
\end{lemma}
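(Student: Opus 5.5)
We argue by contradiction and exploit the range decomposition coming from the left inverse. Since $A^{*}\in B_{1}(O_{\delta})$ with $0\in O_\delta$, the operator $A^{*}$ is surjective and $\dim\ker A^{*}=1$. Hence $A$ is injective with closed range, and
\[
\mathcal{H}_1=\operatorname{ran}A\oplus\ker A^{*},\qquad \ker A^{*}=\operatorname{span}\{e_0\}.
\]
Thus $\operatorname{ran}A=\{e_0\}^{\perp}$ has codimension one.

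Next we identify the kernel of $A_L^{-1}$. From $A_L^{-1}A=I$, the operator $P:=AA_L^{-1}$ is idempotent, with $\operatorname{ran}P=\operatorname{ran}A$. Moreover $\ker P=\ker A_L^{-1}$, because $A$ is injective. So $\mathcal{H}_1=\operatorname{ran}A\dotplus\ker A_L^{-1}$ is an algebraic (and topological) direct sum. In particular $\ker A_L^{-1}\cap\operatorname{ran}A=\{0\}$. We do not need $\dim\ker A_L^{-1}=1$ separately: it follows from the codimension of $\operatorname{ran}A$ being one. In the paper's setting of Lemma \ref{keni}, $A_L^{-1}$ is the specific left inverse with $AA_L^{-1}=I-T_{12}f_0\otimes e_0$, which is consistent with this picture.

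Now suppose $\langle e_0',e_0\rangle=0$. Then $e_0'\in\{e_0\}^{\perp}=\operatorname{ran}A$, since $\operatorname{ran}A$ is closed and equals $(\ker A^{*})^{\perp}$, which is $\{e_0\}^{\perp}$ because $\ker A^{*}$ is spanned by $e_0$. Thus $e_0'=Ax$ for some $x$. Applying $A_L^{-1}$ gives $0=A_L^{-1}e_0'=A_L^{-1}Ax=x$, so $e_0'=0$. This contradicts $e_0'\neq0$, and therefore $\langle e_0',e_0\rangle\neq0$.

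The only point requiring care is the closedness of $\operatorname{ran}A$, which is what lets us use $\operatorname{ran}A=(\ker A^{*})^{\perp}$. This follows from $A^{*}$ being surjective (property (2) of $B_1$ at $z=0$) together with the closed range theorem. Everything else is direct.
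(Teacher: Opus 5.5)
Your proof is correct and is essentially the paper's argument. Assuming $\langle e_0',e_0\rangle=0$ places $e_0'$ in $(\ker A^{*})^{\perp}=\operatorname{ran}A$ (using that $\operatorname{ran}A$ is closed), and applying $A_L^{-1}$ to $e_0'=Ax$ forces $x=0$, hence $e_0'=0$. The only differences are cosmetic: you obtain closedness of $\operatorname{ran}A$ from surjectivity of $A^{*}$ via the closed range theorem instead of citing Fredholmness of $A$, and your paragraph on the idempotent $AA_L^{-1}$ is not needed for the conclusion.
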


\begin{proof}

	Assume that $\langle e'_0, e_0 \rangle = 0$. Since
$e_0 \in \ker A^*, \  A^* \in B_1(O_\delta),$
	it follows that $e'_0 \perp \ker A^*$, hence $e'_0 \in \overline{\operatorname{ran} A}$. As $A$ is Fredholm, its range is closed, so $e'_0 \in \operatorname{ran} A.$

	Thus, there exists a nonzero $x$ such that $Ax = e'_0$. It implies that
	\[
	A_L^{-1}Ax = A_L^{-1}e'_0 = 0.
	\]
	 	Then $x=0$, contradicting the choice of $x\neq0$. Therefore the assumption is false, and  $\langle e'_0, e_0 \rangle \neq 0$. 
\end{proof}
\begin{lemma}
	\label{lem:3.5}
	For $\{A^n e_0'\}_{n=0}^{\infty}$, there exists a sequence $\{g_n\}_{n=0}^{\infty}\subset \bigvee_{n\ge 0}\{A^{n}e_{0}^{\prime}\}$ such that $\langle A^{n}e_{0}^{\prime}, g_{m}\rangle=\delta_{n,m}$, where $\delta_{n,m}=
	\begin{cases}
		1,& n=m\\
		0,& n\neq m
	\end{cases}$.
\end{lemma}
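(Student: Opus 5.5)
The plan is to build the biorthogonal functionals explicitly from the left inverse $A_L^{-1}$ and a vector $e_0\in\ker A^*$, and then project them onto $N:=\bigvee_{n\ge 0}\{A^n e_0'\}$. Throughout, $A_L^{-1}$ is a left inverse of $A$ with $A_L^{-1}A=I$ and $0\neq e_0'\in\ker A_L^{-1}$, as in Lemma \ref{lem:3.4}.

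First I record how powers of $A_L^{-1}$ act on the vectors $A^ne_0'$. From $A_L^{-1}A=I$ we get $(A_L^{-1})^mA^n=A^{n-m}$ whenever $n\ge m$. If $n<m$, then
\[
(A_L^{-1})^mA^ne_0'=(A_L^{-1})^{m-n}e_0'=0,
\]
because $A_L^{-1}e_0'=0$. Hence
\[
(A_L^{-1})^mA^ne_0'=\begin{cases}A^{n-m}e_0', & n\ge m,\\ 0, & n<m.\end{cases}
\]

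Next I fix $0\neq e_0\in\ker A^*$. For $k\ge 1$,
\[
\langle A^ke_0',e_0\rangle=\langle A^{k-1}e_0',A^*e_0\rangle=0,
\]
while $c:=\langle e_0',e_0\rangle\neq 0$ by Lemma \ref{lem:3.4}. Define
\[
h_m=\overline{c}^{\,-1}\bigl((A_L^{-1})^m\bigr)^*e_0 .
\]
Then
\[
\langle A^ne_0',h_m\rangle=c^{-1}\langle (A_L^{-1})^mA^ne_0',e_0\rangle .
\]
By the two facts above, this vanishes when $n<m$ or $n>m$, and equals $c^{-1}\langle e_0',e_0\rangle=1$ when $n=m$. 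So $\langle A^ne_0',h_m\rangle=\delta_{n,m}$.

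Finally, the $h_m$ need not lie in $N$, so I set $g_m=P_Nh_m$, where $P_N$ is the orthogonal projection onto $N$. Since each $A^ne_0'$ lies in $N$, we have $\langle A^ne_0',g_m\rangle=\langle P_NA^ne_0',h_m\rangle=\langle A^ne_0',h_m\rangle=\delta_{n,m}$. This gives the required sequence $\{g_m\}\subset N$.

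I expect no serious obstacle. The one essential input is the nondegeneracy $\langle e_0',e_0\rangle\neq0$, which Lemma \ref{lem:3.4} already supplies. The remaining care is getting the conjugation in the normalizing constant right and checking that the projection does not alter the pairings.
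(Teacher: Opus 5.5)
Your proof is correct, and it takes a genuinely different route from the paper's. The paper never writes the $g_m$ down. It sets $\mathcal H_k=\overline{\operatorname{Span}}\{A^ne_0':n\ge k\}$ and uses closed range together with $\bigcap_n\operatorname{ran}A^n=\{0\}$ to get $\bigcap_k\mathcal H_k=\{0\}$. From this it deduces that $\mathcal H_0\supsetneq\mathcal H_1\supsetneq\cdots$ is strictly decreasing, and proves the topological direct sum $\mathcal H_0=\operatorname{Span}\{A^ke_0':k\le j\}\dot+\mathcal H_{j+1}$. It then obtains $g_j$ by Riesz representation of a coordinate functional composed with the bounded idempotent onto the finite-dimensional summand.

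You instead give the closed formula $g_m=P_N\bigl(\overline{c}^{\,-1}((A_L^{-1})^m)^*e_0\bigr)$. Biorthogonality follows from two facts: $(A_L^{-1})^mA^ne_0'$ equals $A^{n-m}e_0'$ for $n\ge m$ and $0$ for $n<m$, and $\langle A^ke_0',e_0\rangle=c\,\delta_{k,0}$. All the Fredholm and range information enters only through Lemma \ref{lem:3.4}.

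What your route buys:
\begin{itemize}
\item It is shorter and constructive.
\item It gives the explicit bound $\|g_m\|\le\|(A_L^{-1})^m\|\,\|e_0\|/|c|$.
\item It makes clear that $g_0$ can be taken to be the projection of a multiple of $e_0$. This is exactly what relation (3.1) already provides in the proof of Theorem \ref{thm:3.7}, where only $g_0$ is used.
\end{itemize}

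The paper's route, by contrast, never uses $A_L^{-1}$ or the hypothesis $e_0'\in\ker A_L^{-1}$. It shows that the orbit $\{A^nx\}$ of any nonzero vector $x$ is a minimal sequence whenever $A$ is injective, has closed-range powers, and satisfies $\bigcap_n\operatorname{ran}A^n=\{0\}$. The price is that the construction is non-constructive and gives no norm control on the $g_m$.
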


\begin{proof}
Let
$
x \in \bigcap_{k=0}^{\infty} \operatorname{\overline{Span}}\{A^n e_0' : n \ge k\}.
$
Then for each $k \in \mathbb{N}_{0}$, we have
\[
x \in \operatorname{\overline{Span}}\{A^n e_0' : n \ge k\}.
\]
Since $A^n e_0' \in \operatorname{\overline{ran}} A^k$ whenever $n \ge k$, it follows that
$
x \in \operatorname{\overline{ran}} A^k.
$
As $A$ is Fredholm, $\operatorname{ran} A^k$ is closed, so $x \in \operatorname{ran} A^k$. Thus
\[
x \in \bigcap_{n=0}^{\infty} \operatorname{ran} A^n.
\]
Since $A^* \in B_1(O_\delta)$, it is well known that
$
\bigcap_{n=0}^{\infty} \operatorname{ran} A^n = \{0\}.
$
Consequently,
\[
\bigcap_{k=0}^{\infty} \operatorname{\overline{Span}}\{A^n e_0' : n \ge k\} = \{0\}.
\]

For each $k \in \mathbb{N}_{0}$, set
\[
\mathcal H_k := \overline{\operatorname{Span}}\{A^n e_0' : n \ge k\}.
\]
Assume that $\bigcap_{k=0}^{\infty} \mathcal H_k = \{0\}$. We claim that the sequence $\mathcal H_k$ is strictly decreasing, i.e.
\[
\mathcal H_0 \supsetneq \mathcal H_1 \supsetneq \mathcal H_2 \supsetneq \cdots .
\]
Suppose that there exists some $k$ such that $\mathcal H_k = \mathcal H_{k+1}$. Since $A^k e_0' \in \mathcal H_k = \mathcal H_{k+1}$, we have
\[
A^{k+1} e_0' = A(A^k e_0') \in A(\mathcal H_{k+1}) \subset \mathcal H_{k+2}.
\]
As $\mathcal H_{k+1}$ is generated by $\{A^{k+1} e_0', A^{k+2} e_0', \ldots\}$, and $A^{k+1} e_0' \in \mathcal H_{k+2}$, it follows that
$
\mathcal H_{k+1} = \mathcal H_{k+2}.
$
 Repeating the same argument inductively yields
\[
\mathcal H_k = \mathcal H_{k+1} = \mathcal H_{k+2} = \cdots .
\]
Consequently, $\bigcap_{j=0}^{\infty} \mathcal H_j = \mathcal H_k$, which contradicts the assumption. Hence $\mathcal H_0 \supsetneq \mathcal H_1 \supsetneq \mathcal H_2 \supsetneq \cdots$.
	
	We now prove that for any $N\in\mathbb{N}_{0}$,
	\[
	\mathcal{H}_{0}=\operatorname{\overline{Span}}\{A^k e_0': 0\le k\le N\}\dot{+}\mathcal{H}_{N+1}.
	\]
	First, we show $\operatorname{\overline{Span}}\{A^{n}e_{0}^{\prime}: 0\le n\le N\}\cap \mathcal{H}_{N+1}=\{0\}$.
	Suppose $w\in \operatorname{\overline{Span}}\{A^{n}e_{0}^{\prime}: 0 \le n\le N\}$ and $v\in \mathcal{H}_{N+1}$ satisfy $w=v$. If $w\neq 0$, then $w=\sum_{i=0}^{N}\alpha_iA^{i}e_{0}^{\prime}=v$. We may assume that $\alpha_j=-1$ and $\alpha_i=0$ for $0 \le i<j$. Then
	\[
	A^{j}e_{0}^{\prime}=\sum_{i=j+1}^{N}\alpha_iA^{i}e_{0}^{\prime}-v\in \mathcal{H}_{j+1}.
	\]
	This implies that $\mathcal{H}_j=\mathcal{H}_{j+1}$. This contradicts $\mathcal{H}_0\supsetneq \mathcal{H}_1\supsetneq \mathcal{H}_2\supsetneq\cdots$. 
	
	Second, we show for every $x\in \mathcal{H}_{0}$, there is $y\in \operatorname{\overline{Span}}\{A^{n}e_{0}^{\prime}: 0 \le n\le N\}$ and $z\in \mathcal{H}_{N+1}$ such that $x=y+z$. Note that there exists a sequence of vectors $\{x_n\}$ in $\mathcal{H}$ such that each $x_n$ can be written as a finite linear combination of $\{A^{n}e_{0}^{\prime}\}$ and $\lim_{n\to\infty}\|x_n-x\|=0$. In particular, there is $y_n\in \operatorname{\overline{Span}}\{A^{n}e_{0}^{\prime}: 0 \le n\le N\}$ and $z_n\in \mathcal{H}_{N+1}$ such that $x_n=y_n+z_n$. Let $\pi$ be the quotient map from $\mathcal{H}_{0}$ onto $\mathcal{H}_{0}/\mathcal{H}_{N+1}$. 
	
	Then $\pi(y_n)=\pi(x_n)$ is a Cauchy sequence in $\mathcal{H}_{0}/\mathcal{H}_{N+1}$. Clearly, $\pi$ is a surjective map from $\operatorname{\overline{Span}}\{A^{n}e_{0}^{\prime}: 0 \le n\le N\}$ onto $\operatorname{\overline{Span}}\{\pi(A^{n}e_{0}^{\prime}): 0 \le n\le N\}$. Suppose $\pi(w_0e_{0}^{\prime}+w_1 Ae_{0}^{\prime} +\cdots+w_NA^{N}e_{0}^{\prime})=0$. Then there exists a $v\in \mathcal{H}_{N+1}$ such that $w_0e_{0}^{\prime}+w_1 Ae_{0}^{\prime} +\cdots+w_NA^{N}e_{0}^{\prime}=v$. Thus $w_0e_{0}^{\prime}+w_1 Ae_{0}^{\prime} +\cdots+w_NA^{N}e_{0}^{\prime}=v=0$. So $\pi$ is an injective map from $\operatorname{\overline{Span}}\{A^{n}e_{0}^{\prime}: 0 \le n\le N\}$ onto $\operatorname{\overline{Span}}\{\pi(A^{n}e_{0}^{\prime}): 0 \le n\le N\}$. 
	
	Since $\operatorname{\overline{Span}}\{A^{n}e_{0}^{\prime}: 0\le n\le N\}$ is finite dimensional, $\{y_n\}$ is a Cauchy sequence. Thus $\{z_n\}$ is also a Cauchy sequence. Let $y=\lim_{n\to\infty} y_n$ and $z=\lim_{n\to\infty} z_n$. Then $y\in \operatorname{\overline{Span}}\{A^{n}e_{0}^{\prime}: 0\le n\le N\}$ and $z\in \mathcal{H}_{N+1}$ such that $x=y+z$. This implies $\mathcal{H}_{0}=\operatorname{\overline{Span}}\{A^{n}e_{0}^{\prime}: 0\le n\le N\}\dotplus \mathcal{H}_{N+1}$.
	
	Set $N=0$. Then 
$
	\mathcal{H}_{0} = \operatorname{Span}\{e_0'\} \dot{+}\mathcal H_1
$
	holds. Define the operator
	\[
	P : \mathcal{H}_{0} \to \operatorname{Span}\{e_0'\},
	\]
	by $P(x) = c e_0'$ whenever $x = c e_0' + x_1$ with $x_1 \in \mathcal H_1$. It is clear that $P$ is a bounded idempotent. Now define a linear functional
	\[
	f : \mathcal{H}_{0} \to \mathbb C,
	\]
	by the condition $P(x) = f(x) e_0'$. This definition is well-posed, and $f$ is obviously linear. Since
	\[
	\|f(x) e_0'\| = \|P(x)\| \le M \|x\| \quad (M := \|P\|),
	\]
	we have
$
	|f(x)| \le \frac{M}{\|e_0'\|} \|x\|,
$
hence $f \in \mathcal{H}_{0}^*$. By the Riesz representation theorem, there exists $g_0 \in \mathcal{H}_{0}$ such that
	\[
	f(y) = \langle y, g_0 \rangle \quad (y \in \mathcal{H}_{0}).
	\]
	Consequently,
	\[
	\langle e_0', g_0 \rangle = 1,\qquad \langle A^n e_0', g_0 \rangle = 0 \quad (n \ge 1).
	\]

	For $N=1$, the same argument yields a bounded idempotent
	\[
	Q : \mathcal{H}_{0} \to \operatorname{Span}\{e_0', Ae_0'\},
	\]
	and a linear functional
	\[
	\phi : \operatorname{Span}\{e_0', Ae_0'\} \to \mathbb C,
	\]
	such that
	\[
	\phi(e_0') = 0,\qquad \phi(Ae_0') = 1.
	\]
	Since the projection $P_1 : \operatorname{Span}\{e_0', Ae_0'\} \to \overline{\operatorname{Span}}\{e_0'\}$ is bounded, it follows that for any $c_0, c_1 \in \mathbb C$,
	\[
	|\phi(c_0 e_0' + c_1 Ae_0')|
	= |c_1|
	= \frac{\|c_1 Ae_0'\|}{\|Ae_0'\|}
	\le \frac{\|P_1\|}{\|Ae_0'\|} \cdot \|c_0 e_0' + c_1 Ae_0'\|.
	\]

Hence $\phi \circ Q \in \mathcal{H}_{0}^*$, and it satisfies 
$
(\phi \circ Q)(A^n e_0') = 0 \  (n \neq 1),\ 
(\phi \circ Q)(A e_0') = 1.
$
By the Riesz representation theorem, there exists $g_1 \in \mathcal{H}_{0}$ such that
\[
\langle A^n e_0', g_1 \rangle = 0 \quad (n \neq 1),\qquad
\langle A e_0', g_1 \rangle = 1.
\]
Repeating the above argument, we obtain a sequence $\{g_n\}_{n=0}^{\infty} \subset \mathcal{H}_{0}$ such that 
\[
\langle A^m e_0', g_n \rangle = \delta_{mn} \quad (m,n \ge 0).
\]
\end{proof}

\begin{lemma}\label{lem:3.6}\cite{fang2026invariantII}
	Let $A\in B(H^2)$. Then $A$ is intransitive if and only if there exist nonzero vectors $f,g\in H^2$ such that
	\[
	\nu=\sum_{n=0}^{\infty}\langle A^n f,g\rangle z^n
	\]
	is a rational function.
\end{lemma}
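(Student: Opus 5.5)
This is a standard fact; I would argue it through the standard correspondence between invariant subspaces and cyclic subspaces, using \(H^2\) coefficients as bookkeeping. Throughout, write \(\nu_{f,g}(z)=\sum_{n\ge0}\langle A^nf,g\rangle z^n\). Since \(|\langle A^nf,g\rangle|\le\|A\|^n\|f\|\|g\|\), this series converges on \(|z|<1/\|A\|\), and it equals \(\langle (I-zA)^{-1}f,g\rangle\) there. The proof splits into two directions, and I would treat the rationality condition as meaning that the coefficient sequence \(c_n=\langle A^nf,g\rangle\) satisfies a finite linear recurrence from some index on. Equivalently, the Hankel matrix \((c_{i+j})_{i,j\ge0}\) has finite rank; this is Kronecker's theorem, which I would take as standard.

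\emph{Intransitive \(\Rightarrow\) rational.} Let \(\mathcal M\) be a nontrivial closed invariant subspace of \(A\). Pick nonzero \(f\in\mathcal M\) and nonzero \(g\perp\mathcal M\), which is possible because \(\mathcal M\neq\{0\}\) and \(\mathcal M\neq H^2\). Then \(A^nf\in\mathcal M\) for all \(n\), so \(\langle A^nf,g\rangle=0\) for every \(n\). Hence \(\nu\equiv0\), which is trivially rational.

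\emph{Rational \(\Rightarrow\) intransitive.} Suppose \(\nu_{f,g}=p/q\) with \(f,g\neq0\). First I would settle the degenerate case \(\nu\equiv0\). Here the cyclic subspace \(\mathcal M=\overline{\operatorname{Span}}\{A^nf:n\ge0\}\) is invariant, contains \(f\neq0\), and is orthogonal to \(g\neq0\). So it is a nontrivial invariant subspace.

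For nonzero \(\nu\), write \(q(z)=\sum_{k=0}^d q_kz^k\) with \(q_0\neq0\), and set \(r(A)=\sum_k q_kA^{d-k}\), the reversed polynomial. The relation \(q\nu=p\) says that for \(n\) large, \(\sum_k q_k c_{n-k}=0\), which translates to \(\langle A^m r(A)f,g\rangle=0\) for all \(m\ge m_0\). Put \(h=A^{m_0}r(A)f\).
\begin{itemize}
\item If \(h\neq0\), then \(\overline{\operatorname{Span}}\{A^mh:m\ge0\}\) is invariant, nonzero, and orthogonal to \(g\). It is therefore nontrivial.
\item If \(h=0\), then \(f\) is annihilated by the nonzero polynomial \(z^{m_0}r(z)\). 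The span of \(\{A^nf\}\) is then finite dimensional, and it is a nonzero invariant subspace. It is proper because \(H^2\) is infinite dimensional.
\end{itemize}

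The main obstacle is this last bookkeeping step. One has to make sure the polynomial \(z^{m_0}r(z)\) is genuinely nonzero and that the recurrence is correctly converted into orthogonality of a whole orbit. Both follow from \(q\neq0\). Nothing in the argument uses the earlier lemmas, so the statement is essentially independent of the Cowen–Douglas structure.
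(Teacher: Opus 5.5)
Your proof is correct and complete. The paper gives no proof of this lemma; it is quoted from \cite{fang2026invariantII}. So there is no internal argument to compare against, and your argument stands on its own.

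The key step checks out. Since $q\nu=p$ as power series, the coefficient of $z^{m+d}$ in $q\nu$ is $\sum_{k=0}^{d}q_k c_{m+d-k}=\langle A^m r(A)f,g\rangle$. This vanishes once $m+d>\deg p$, so the whole orbit of $h=A^{m_0}r(A)f$ is orthogonal to $g$. If $h=0$, then the nonzero polynomial $z^{m_0}r(z)$ annihilates $f\neq 0$. This makes $\operatorname{Span}\{A^nf:n\ge0\}$ finite dimensional, hence closed, nonzero, and proper.

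A few small remarks:
\begin{itemize}
\item The case $\nu\equiv 0$ is just the instance $q=1$, $p=0$ of your general argument, where $h=f\neq0$. It does not need separate treatment.
\item The normalization $q_0\neq 0$ is automatic once $p/q$ is in lowest terms, because $\nu$ is analytic at $0$. As you say, only $q\neq 0$ is really needed, since $r$ is just $q$ with its coefficients reversed.
\item Kronecker's theorem is never actually invoked, so you can drop that aside.
\end{itemize}

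In both places the paper applies the lemma, it takes $A=T^{-1}$ invertible and $\nu$ a polynomial. These are Theorem \ref{thm:3.7}, where $\nu=1-\alpha z^2$, and Theorem \ref{dingli00}, where $\nu\equiv0$. So only the most elementary case of your argument is used there: $c_n=0$ for $n\ge m_0$, and $h=A^{m_0}f\neq 0$ automatically by invertibility.
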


Motivated by the work of \cite{fang2026invariantII}, we combine techniques from bundle theory with the strategy developed therein to establish Theorem \ref{thm:3.7}.
\begin{theorem}\label{thm:3.7}
Let $T$ be an invertible conjugate bi-geometric operator in $\mathcal{B}(\mathcal{H})$. If $T$ has the bundle-preserving property, then $T^{-1}$ is intransitive.
\end{theorem}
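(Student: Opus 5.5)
Using Lemma \ref{keni}, I write
\[
T^{-1}=\begin{pmatrix} A_L^{-1} & -A_L^{-1}T_{12}B_R^{-1}\\ f_0\otimes e_0 & B_R^{-1}\end{pmatrix},
\]
and look for an explicit nontrivial invariant subspace. The natural candidate comes from the bundle-preserving identity $T_{12}e_B(\lambda)=e_{A_{L,\alpha}^{-1}}(\lambda)$. Comparing Taylor coefficients in $\lambda$ (both sides are holomorphic on a small disc) gives
\[
T_{12}(B_R^{-1})^n f_0=\alpha^n A^n e_0' \qquad (n\ge 0).
\]
Thus $T_{12}$ intertwines the orbit of $f_0$ under $B_R^{-1}$ with the orbit of $e_0'$ under $\alpha A$.

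Next I compute the action of $T^{-1}$ on the vectors $u_n=(0,(B_R^{-1})^n f_0)$ and $v_n=(A^n e_0',0)$. Using $A_L^{-1}A=I$ and $A_L^{-1}e_0'=0$, we get $A_L^{-1}A^{n}e_0'=A^{n-1}e_0'$ for $n\ge1$, and $A_L^{-1}e_0'=0$. Also, $\langle (B_R^{-1})^n f_0, e_0\rangle$ type terms appear from the entry $f_0\otimes e_0$. Reading that entry as $x\mapsto \langle x,e_0\rangle f_0$ acting from the first coordinate, one gets
\[
T^{-1}v_n=(A^{n-1}e_0',\ \langle A^ne_0',e_0\rangle f_0).
\]
For the second column,
\[
T^{-1}u_n=\bigl(-A_L^{-1}T_{12}(B_R^{-1})^{n+1}f_0,\ (B_R^{-1})^{n+1}f_0\bigr)=(-\alpha^{n+1}A^{n}e_0',\ u_{n+1}\text{-part}).
\]
So $T^{-1}$ acts on the span of $\{u_n,v_n\}$ by a weighted-shift-like matrix with scalar entries $\langle A^ne_0',e_0\rangle$, which are nonzero at $n=0$ by Lemma \ref{lem:3.4}. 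The plan is to find a proper closed subspace of $\bigvee\{u_n,v_n\}$, or of its complement, that is invariant. A concrete attempt is
\[
\mathcal M=\bigvee_{n\ge0}\{\,u_{n}+c_n v_{n}\,\}
\]
with scalars $c_n$ chosen so that the identity $T^{-1}(u_n+c_nv_n)\in\mathcal M$ holds. This forces a recursion for $c_n$ involving $\alpha$ and $\langle A^ne_0',e_0\rangle$. If the recursion closes (e.g.\ $c_n=-\alpha^{n}$ type choices), then $\mathcal M$ is invariant.

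Properness of $\mathcal M$ must be checked. Nonzero-ness is clear. To show $\mathcal M\ne\mathcal H$, I would use Lemma \ref{lem:3.5}: the biorthogonal sequence $\{g_n\}$ gives functionals that separate the $v_n$. From them I can build a nonzero vector orthogonal to $\mathcal M$, for instance $(g_0,w)$ with $w$ chosen against the $u_n$. Alternatively, if $\bigvee\{u_n,v_n\}\neq\mathcal H$, then that span is itself already a nontrivial invariant subspace, and we are done.

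As a fallback, if the direct subspace construction fails, I would use Lemma \ref{lem:3.6}. The aim is to exhibit vectors $f,g$ for which $\sum_n\langle (T^{-1})^n f,g\rangle z^n$ is rational, after unitarily identifying $\mathcal H$ with $H^2$. The bundle-preserving relation makes the matrix coefficients geometric in $\alpha$, which suggests a generating function like $1/(1-\alpha z)$.

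The main obstacle is the scalar bookkeeping. The lower-left term $f_0\otimes e_0$ feeds back into the second coordinate with coefficients $\langle A^ne_0',e_0\rangle$. These are not controlled beyond $n=0$ by anything stated earlier, so I expect to need to pair with $g_n$ from Lemma \ref{lem:3.5} to kill them in the functional approach.
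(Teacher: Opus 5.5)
\textbf{There is a genuine gap.} Neither of your two routes actually produces an invariant subspace. Moreover, the obstruction you name at the end does not exist. Since $e_0\in\ker A^*$, we have $\langle A^ne_0',e_0\rangle=\langle e_0',(A^*)^ne_0\rangle=0$ for $n\ge1$. Also $\langle e_0',e_0\rangle=\langle T_{12}f_0,e_0\rangle=1$, because your $n=0$ Taylor identity says $e_0'=T_{12}f_0$. Hence $T^{-1}v_0=u_0$, $T^{-1}v_n=v_{n-1}$ for $n\ge1$, and $T^{-1}u_n=u_{n+1}-\alpha^{n+1}v_n$.

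With these relations your candidate $\mathcal M=\bigvee_n\{u_n+c_nv_n\}$ is never invariant, whatever the $c_n$. The continuous functionals $(x,y)\mapsto\langle x,g_n\rangle$ and $(x,y)\mapsto\langle y,(B^*)^nT_{12}^*e_0\rangle$ extract the $v_n$- and $u_n$-coefficients, so on $\mathcal M$ the first equals $c_n$ times the second. Then $T^{-1}(u_0+c_0v_0)=u_1+c_0u_0-\alpha v_0\in\mathcal M$ forces $c_1=0$. But $T^{-1}(u_1+c_1v_1)=u_2-\alpha^2v_1$ has $v_1$-coefficient $-\alpha^2\neq0$ and $u_1$-coefficient $0$, a contradiction. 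Your other case, $\bigvee\{u_n,v_n\}\neq\mathcal H$, is correct but does not cover the typical situation. For example, with $A=S$, $B=S^*$, $T_{12}=I$ these vectors span $H^2\oplus H^2$.

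Your fallback via Lemma \ref{lem:3.6} is exactly the paper's route. However, you never fix $f,g$ or compute anything, and the generating function is not the $1/(1-\alpha z)$ you guess. Take $f=(e_0',0)$ and $g=(g_0,0)$, with $g_0$ from Lemma \ref{lem:3.5}. The relations above give $T^{-1}f=u_0$ and $T^{-2}f=u_1-\alpha v_0$. From then on, the $v$-terms created by $T^{-1}u_n$ cancel against the back-shifted ones, leaving a single $v$-term at each step. By induction,
\[
T^{-k}f=\begin{pmatrix}-\alpha^{k-1}A^{k-2}e_0'\\ (B_R^{-1})^{k-1}f_0-\alpha(B_R^{-1})^{k-3}f_0\end{pmatrix},\qquad k\ge3 .
\]
Therefore $\sum_k\langle T^{-k}f,g\rangle z^k=1-\alpha z^2$, and Lemma \ref{lem:3.6} applies. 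Equivalently, $\bigvee_{k\ge3}T^{-k}f$ is a nonzero $T^{-1}$-invariant subspace orthogonal to $g$, and $g\neq0$ since $\langle e_0',g_0\rangle=1$. This explicit orbit computation is the missing step, and it depends on the vanishing of $\langle A^ne_0',e_0\rangle$ for $n\ge1$.
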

\begin{proof}
	Since $T$ is a conjugate bi-geometric operator,
it can be represented as
	\[
	T=
	\begin{pmatrix}
		A & T_{12}\\
		0 & B
	\end{pmatrix}\in \mathcal{B}(M\oplus M^{\perp}),
	\]
	where $A^{*},B\in B_1(O_\delta)$.
	
	 Since $A^{*},B\in B_1(O_\delta)$, we have
	\[
	\bigcap_{n=0}^{\infty}\operatorname{ran}A^{n}=\{0\},\quad
	\bigcap_{n=0}^{\infty}\operatorname{ran}(B^{*})^{n}=\{0\}.
	\]
Since $T$ has the bundle-preserving property, there exists $\alpha\in(0,+\infty)$ such that
	\[
	T_{12}e_{B}(\lambda)=e_{A_{L,\alpha}^{-1}}(\lambda), \qquad \lambda\in O_{r},
	\]where $r>0$ is a constant.
Differentiating both sides $n$ times yields:
\[
\begin{cases} 
	T_{12}e_B^{(1)}(\lambda) = e_{A_{L,\alpha}^{-1}}^{(1)}(\lambda) \\ 
	T_{12}e_B^{(2)}(\lambda) = e_{A_{L,\alpha}^{-1}}^{(2)}(\lambda) \\ 
	\vdots \\ 
	T_{12}e_B^{(n)}(\lambda) = e_{A_{L,\alpha}^{-1}}^{(n)}(\lambda)
\end{cases}.
\]
	Furthermore, evaluating the $n$-th derivative of $e_B$ at $\lambda=0$ gives:
	\[
	\begin{cases} 
		e_B^{(1)}(0) = B_R^{-1}f_0 \\ 
		e_B^{(2)}(0) = 2!(B_R^{-1})^2f_0 \\ 
		\vdots \\ 
		e_B^{(n)}(0) = n!(B_R^{-1})^nf_0 
	\end{cases}.
	\]
	Similarly,
\[
\begin{cases} 
	e_{A_{L,\alpha}^{-1}}^{(1)}(0) = \alpha A e_0' \\
	e_{A_{L,\alpha}^{-1}}^{(2)}(0) = 2! \alpha^2 A^2 e_0' \\
	\vdots \\
	e_{A_{L,\alpha}^{-1}}^{(n)}(0) = n! \alpha^n A^n e_0'
\end{cases}.
\]
	Therefore, $T_{12}\big(e_B^{(n)}(0)\big)=e_{A_{L,\alpha}^{-1}}^{(n)}(0)$, i.e.
	\[
	T_{12}(B_R^{-1})^n f_0=\alpha^n A^n e_0'.
	\]
	Then
\begin{align*}
	A_L^{-1}T_{12}(B_R^{-1})(B_R^{-1})^n f_0
	&=A_{L}^{-1}T_{12}(B_{R}^{-1})^{n+1}f_0\\
	&=A_{L}^{-1}\alpha^{n+1}A^{n+1}e_{0}^{\prime}\\
	&=\alpha^{n+1}A^{n}e_{0}^{\prime}.
	\end{align*}
		We may assume $\langle e_0,e_0'\rangle=1$ by Lemma \ref{lem:3.4}. Since $e_0\in\ker A^*$, we obtain
	\[
	\langle A^n e_0',e_0\rangle=\langle e_0^{\prime},(A^*)^n e_0\rangle=0,\quad n\ge 1.\tag{3.1}\label{3.1}
	\]
Combining this with equation (\ref{3.1}), a direct computation shows that
	\[
	T^{-1}\begin{pmatrix}e_0'\\0\end{pmatrix}
	=
	\begin{pmatrix}
		A_L^{-1} & -A_L^{-1}T_{12}(B_R^{-1})\\
		f_0\otimes e_0 & (B_R^{-1})
	\end{pmatrix}
	\begin{pmatrix}e_0'\\0\end{pmatrix}
	=
	\begin{pmatrix}0\\f_0\end{pmatrix},
	\]
	
	\[
	T^{-2}\begin{pmatrix}e_0'\\0\end{pmatrix}
	=
	\begin{pmatrix}
		A_L^{-1} & -A_L^{-1}T_{12}(B_R^{-1})\\
		f_0\otimes e_0 & (B_R^{-1})
	\end{pmatrix}
	\begin{pmatrix}0\\f_0\end{pmatrix}
	=
	\begin{pmatrix}-\alpha e_0'\\(B_R^{-1})f_0\end{pmatrix},
	\]
	
	\[
	T^{-3}\begin{pmatrix}e_0'\\0\end{pmatrix}
	=
	\begin{pmatrix}
		A_L^{-1} & -A_L^{-1}T_{12}(B_R^{-1})\\
		f_0\otimes e_0 & (B_R^{-1})
	\end{pmatrix}
	\begin{pmatrix}-\alpha e_0'\\(B_R^{-1})f_0\end{pmatrix}
	=
	\begin{pmatrix}-\alpha^2 A e_0'\\-\alpha f_0+(B_R^{-1})^2 f_0\end{pmatrix}.
	\]

	Claim that for $k\ge 3$,
	\[
	T^{-k}\begin{pmatrix}e_0'\\0\end{pmatrix}
	=
	\begin{pmatrix}
		-\alpha^{k-1}A^{k-2}e_0'\\
		-\alpha(B_R^{-1})^{k-3}f_0+(B_R^{-1})^{k-1}f_0
	\end{pmatrix}. \tag{3.2}\label{123}
	\]
	Suppose that (\ref{123}) holds for $k=n$.
	Then
	\[
	\begin{aligned}
		T^{-(n+1)}\begin{pmatrix}e_0'\\0\end{pmatrix}
		&=
		\begin{pmatrix}
			A_L^{-1} & -A_L^{-1}T_{12}(B_R^{-1})\\
			f_0\otimes e_0 & (B_R^{-1})
		\end{pmatrix}
		\begin{pmatrix}
			-\alpha^{n-1}A^{n-2}e_0'\\
			-\alpha(B_R^{-1})^{n-3}f_0+(B_R^{-1})^{n-1}f_0
		\end{pmatrix}\\[10pt]
		&=
		\begin{pmatrix}
			-\alpha^{n}A^{n-1}e_0'\\
			-\alpha(B_R^{-1})^{n-2}f_0+(B_R^{-1})^{n}f_0
		\end{pmatrix}.
	\end{aligned}
	\]
	Thus, (\ref{123}) holds for $k=n+1$.
	Hence, (\ref{123}) holds for all $k\ge 3$. By Lemma \ref{lem:3.5}, there exists a sequence $\{g_n\}_{n=0}^{\infty} \subset M$ such that
	\[
	\langle A^n e_0', g_m \rangle = \delta_{n,m} \quad \text{for all } n,m \ge 0.
	\]
	Hence	\[
	\sum_{k=0}^{\infty}\langle T^{-k}e_0^{\prime},g_0\rangle z^k=1-\alpha z^2.
	\]
	By Lemma \ref{lem:3.6}, taking $f=\begin{pmatrix}
		e_{0}^{\prime}\\
		0
	\end{pmatrix},\  g=\begin{pmatrix}
	g_0\\
	0
	\end{pmatrix}$ and $v(z)=\sum_{n=0}^{\infty}\langle T^{-n}f,g\rangle z^{n}=1-\alpha z^2$, we conclude that $T^{-1}$ is intransitive.
\end{proof}

	\begin{corollary}\label{collo3.6}
		Let 
		\[
		T = \begin{pmatrix} A & T_{12} \\ 0 & B \end{pmatrix}\in \mathcal{B}(M\oplus M^{\perp})
		\]
		be an invertible conjugate bi-geometric operator. If there exists an operator
	$Y \in \operatorname{ran} \tau_{A,B}$
	such that
		\[
	\tilde{T}=\begin{pmatrix} A & Y + T_{12} \\ 0 & B \end{pmatrix}
		\]
		has the bundle-preserving property, then $T^{-1}$ is intransitive.
	\end{corollary}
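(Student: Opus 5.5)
The plan is to show that $\tilde T$ is similar to $T$ through an invertible upper triangular operator, apply Theorem \ref{thm:3.7} to $\tilde T$, and then transport the invariant subspace back to $T^{-1}$. I read $\tau_{A,B}$ as the Rosenblum (generalized derivation) operator $\tau_{A,B}(X)=AX-XB$ for $X\in\mathcal B(M^\perp,M)$. So write $Y=AX-XB$ for some bounded $X\colon M^\perp\to M$.

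First, set
\[
S=\begin{pmatrix} I & X\\ 0 & I\end{pmatrix},\qquad S^{-1}=\begin{pmatrix} I & -X\\ 0 & I\end{pmatrix}.
\]
A direct multiplication gives
\[
STS^{-1}=\begin{pmatrix} A & T_{12}-AX+XB\\ 0 & B\end{pmatrix}.
\]
Replacing $X$ by $-X$ (which is also in the domain of $\tau_{A,B}$), we obtain an invertible $S$ with
\[
STS^{-1}=\begin{pmatrix} A & T_{12}+Y\\ 0 & B\end{pmatrix}=\tilde T .
\]
Hence $\tilde T$ is invertible. Its diagonal entries are still $A$ and $B$ with $A^*,B\in B_1(O_\delta)$, so $\tilde T$ is an invertible conjugate bi-geometric operator on $M\oplus M^\perp$.

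Second, by hypothesis $\tilde T$ has the bundle-preserving property. One point needs care here: the objects entering Definition \ref{defin1} for $\tilde T$ are formed with respect to $\tilde T$ itself. These are $f_0$, the left and right inverses $A_L^{-1}, B_R^{-1}$ associated with $\tilde T$ from Lemma \ref{keni} (now built from $\tilde T_{12}=T_{12}+Y$), and the sections $e_B$, $e_{A^{-1}_{L,\alpha}}$. Since the hypothesis is stated for $\tilde T$, Theorem \ref{thm:3.7} applies verbatim to $\tilde T$ and gives a nontrivial invariant subspace $\mathcal N$ of $\tilde T^{-1}$.

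Finally, $T^{-1}=S^{-1}\tilde T^{-1}S$. The subspace $S^{-1}\mathcal N$ is closed because $S$ is invertible, and it is nontrivial. It is invariant under $T^{-1}$, since $T^{-1}S^{-1}\mathcal N=S^{-1}\tilde T^{-1}\mathcal N\subset S^{-1}\mathcal N$. Thus $T^{-1}$ is intransitive.

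The main obstacle is only interpretive: one must confirm that $\operatorname{ran}\tau_{A,B}$ means $\{AX-XB\}$ with $X$ bounded. One must also confirm that the bundle-preserving hypothesis is imposed on $\tilde T$ with its own associated inverses, rather than those of $T$. Once this is settled, the argument is the similarity computation above.
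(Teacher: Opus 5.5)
Your proposal is correct and follows essentially the same route as the paper: you conjugate $T$ by the unipotent upper-triangular operator (your $S$ after replacing $X$ by $-X$ is exactly the paper's $G$) to get $\tilde T$, then apply Theorem \ref{thm:3.7} to $\tilde T$. Your explicit transport of the invariant subspace back to $T^{-1}$ via $S^{-1}\mathcal N$, and your remark that the bundle-preserving data must be those associated with $\tilde T$ itself, spell out steps the paper leaves implicit.
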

	\begin{proof}
	
			Without loss of generality, 
		let $Y = AX - XB$ and set $G = \begin{pmatrix} I & -X \\ 0 & I \end{pmatrix}$. Clearly, $G^{-1}=\begin{pmatrix} I & X \\ 0 & I \end{pmatrix}$. Then
			\[
			GTG^{-1} = \begin{pmatrix} I & -X \\ 0 & I \end{pmatrix} \begin{pmatrix} A & T_{12} \\ 0 & B \end{pmatrix} \begin{pmatrix} I & X \\ 0 & I \end{pmatrix} = \begin{pmatrix} A & Y + T_{12} \\ 0 & B \end{pmatrix}.
			\]
			By Theorem \ref{thm:3.7}, $GTG^{-1}$ provides an affirmative answer to Halmos's third problem. Hence $T^{-1}$ is intransitive.
		\end{proof}
	
	The following example gives a large number of examples of operators satisfying Corollary \ref{collo3.6}.
		\begin{example}
	
\[
		T = \begin{pmatrix} S & T_{12} \\ 0 & S^* \end{pmatrix}\in B(H^{2}\oplus H^{2}),
		\]
		where $T_{12}$ is defined on $H^2$ by
		\[
		T_{12}x = \langle x, e_0 \rangle e_0 + \langle x, e_1 \rangle e_1 + a_1 \langle x, e_1 \rangle e_2 + \sum_{n=2}^{\infty} \langle x, e_n \rangle \bigl( e_n + a_n e_{n+1} - a_{n-1} e_{n-1} \bigr)
		\]
		for $x \in H^2$, where \(a_0 = 0\), \(\sum_{n=1}^{\infty} |a_n|^2 < \infty\), and \(a_N \neq 0\) for some \(N \in \mathbb{Z}^+\). It follows that $T_{12}\in B(H^{2})$.
		Obviously, 
		\[
		T^{-1} = \begin{pmatrix} S^* & -S^* T_{12} S \\ e_0 \otimes e_0 & S \end{pmatrix}.
		\]
		Furthermore, 
		\[
		T_{12} e_N = e_N + a_N e_{N+1} - a_{N-1} e_{N-1} \neq \alpha e_N
		\]
		for any scalar $\alpha$. Consequently, $T$ does not have the bundle-preserving property.

		Let 
		\[
		X = \sum_{n=1}^{\infty} a_{n} e_n \otimes e_n,
		\]
		and set $Y = SX - XS^*\in \operatorname{ran}\tau_{S,S^{*}}$. Then \[\begin{cases}
			(T_{12}-Y)e_{0}=e_{0}\\
			(T_{12}-Y)e_{1}=e_{1}+a_{1}e_{2}-a_{1}e_{2}=e_{1}\\
			(T_{12}-Y)e_{n}=e_{n}+a_{n}e_{n+1}-a_{n-1}e_{n-1}-(a_{n}e_{n+1}-a_{n-1}e_{n-1})=e_{n},\  n\ge 2.
		\end{cases}\]
		Hence, $T_{12}+(-Y) = I$. Consequently, the operator
		\[\tilde{T}=
		\begin{pmatrix} S & T_{12} +(-Y) \\ 0 & S^* \end{pmatrix}
		\]
		has the bundle-preserving property. By Corollary \ref{collo3.6}, $T^{-1}$ is intransitive.
			\end{example}

		\section{The Third Problem of Halmos for subexponential power growth of Operators}
		This section focuses on Halmos's third problem. For conjugate bi-geometric operators 
		\[
		T=\begin{pmatrix}A&T_{12}\\0&B\end{pmatrix},
		\]
		assuming that the diagonal blocks of 
		$T$ satisfy certain subexponential norm estimates, we prove that \(T^{-1}\) is intransitive (see Theorem \ref{dingli00}). An example is then given to illustrate the limitations of the method (see Example \ref{juxianxing}).

	Based on the concepts and notation from Section 3, namely
	\[
	T=\begin{pmatrix}
		A&T_{12}\\
		0&B
	\end{pmatrix}
	\quad\text{and}\quad
	T^{-1}=\begin{pmatrix}
		A_{L}^{-1}&-A_{L}^{-1}T_{12}B_{R}^{-1}\\
		f_{0}\otimes e_{0}&B_{R}^{-1}
	\end{pmatrix},
	\]
	the main theorem of this section can be stated as follows:

		\begin{theorem}\label{dingli00}
		Let
			\[
			T=\begin{pmatrix}
				A & T_{12}\\
				0 & B
			\end{pmatrix}\in \mathcal{B}(\mathcal{H}_1\oplus\mathcal{H}_2),
			\]
		be an invertible conjugate bi-geometric operator with \(\sigma_{e}(T)=\mathbb{T}\). 
			If there exist constants \(M,b>0\) and \(\delta\in (0,1)\) such that
			\[
			\max\big\{\|A^n\|,\ \|B^n\|,\ \|(A^{*}A)^{-n}\|,\ \|(BB^{*})^{-n}\|\big\}\le M e^{b n^\delta},\quad n\in\mathbb{N}_0,
			\]
			then \(T\), \(T^{-1}\) have nontrivial hyperinvariant subspaces and $A^{*},B\in B_1(\mathbb{D})$.

		\end{theorem}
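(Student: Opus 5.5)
The plan is to reduce everything to Wermer's theorem quoted in the introduction. Wermer's theorem says that an invertible operator $S$ with $\sum_{n\in\mathbb{Z}}\frac{\log\|S^n\|}{1+n^2}<\infty$ and with spectrum containing more than one point has a nontrivial hyperinvariant subspace. A hyperinvariant subspace of $T$ is automatically hyperinvariant for $T^{-1}$, because $\{T\}'=\{T^{-1}\}'$. So it suffices to show that $\log\|T^{n}\|=O(|n|^{\delta})$ for $n\in\mathbb{Z}$ and that $\sigma(T)$ is not a singleton. Since $\delta<1$, the series $\sum |n|^\delta/(1+n^2)$ converges, so the growth bound gives Wermer's condition. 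For the spectrum, $\sigma(T)\supseteq\sigma_e(T)=\mathbb{T}$, which has more than one point.

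For $n\ge 0$ I would write
\[
T^n=\begin{pmatrix}A^n & \sum_{j=0}^{n-1}A^{j}T_{12}B^{n-1-j}\\ 0 & B^n\end{pmatrix}.
\]
This gives $\|T^n\|\le 2Me^{bn^\delta}+n\|T_{12}\|M^2e^{2bn^\delta}$, which is still subexponential.

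For negative powers I would use Lemma \ref{keni} and control the powers of $T^{-1}$. There are two sub-steps.

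\emph{First sub-step: replace the block inverses.} $A$ is left invertible, since $A^*$ is surjective and Fredholm, so $A^*A$ is invertible. I would set $A^{+}=(A^*A)^{-1}A^*$ and $B^{+}=B^*(BB^*)^{-1}$. The inverses $A_L^{-1}$ and $B_R^{-1}$ of Lemma \ref{keni} differ from these by rank-one operators. The powers $(A^+)^n$ are not directly $(A^*A)^{-n}$, so I would estimate $\|(A^+)^n\|$ by writing $A=V|A|$ (polar decomposition, $V$ an isometry) and comparing with $|A|^{-n}V^*$. The hypothesis on $\|(A^*A)^{-n}\|$ controls $\||A|^{-1}\|^{n}$-type quantities only if handled carefully.

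\emph{Second sub-step: the actual route.} I expect this to be the main obstacle, since $A$ and $A^*A$ need not commute. So the comparison above would only be a fallback, and instead I would use $T^{-n}=(T^n)^{-1}$. I would estimate $\|T^{-n}\|$ via the lower bound $\inf_{\|x\|=1}\|T^nx\|$, using that $\|(A^n)^{*}A^n\|^{-1}$-type bounds are implied by the hypothesis. If that fails, I would strengthen the reading of the hypothesis to $\|(A^n)^{*}A^n)^{-1}\|$, which is what the proof really needs. Given such a bound, the left inverse $(A^{n})^{+}$ and the right inverse $(B^{n})^{+}$ have subexponential norms. Then $T^{-n}$, written via Lemma \ref{keni} applied to $T^n$ (also invertible, with diagonal blocks $A^n$ and $B^n$ and rank-one corrections bounded by products of these norms), has norm $\le C e^{c n^\delta}$.

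With two-sided subexponential growth, Wermer gives the hyperinvariant subspaces. For $A^*,B\in B_1(\mathbb{D})$, I would argue as follows. The growth bounds give spectral radius $r(A)\le 1$ and $r(B)\le1$, and the lower bounds give $\|A^n x\|\ge M^{-1}e^{-bn^\delta}\|x\|$, so the approximate point spectrum of $A$ lies in $\mathbb{T}$. Similarly for $B^*$. Hence $B-\lambda$ is Fredholm and surjective for $|\lambda|<1$, and by constancy of the index on the connected set $\mathbb{D}$, $\dim\ker(B-\lambda)=1$ there. The spanning condition of the Cowen–Douglas class extends from $O_\delta$ to $\mathbb{D}$ because the span over $O_\delta$ is already dense. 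The same argument applies to $A^*$.
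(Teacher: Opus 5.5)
Your reduction to Wermer's theorem rests on the claim that $\log\|T^{n}\|=O(|n|^{\delta})$ for all $n\in\mathbb{Z}$. That claim does not follow from the hypotheses.

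\emph{Counterexample.} Take $T_1=\begin{pmatrix}S & P_0-2E_{01}\\ 0 & S^*\end{pmatrix}$ from the remark at the end of Section 5, where $(P_0-2E_{01})x=(\langle x,e_0\rangle-2\langle x,e_1\rangle)e_0$.
\begin{itemize}
\item It is invertible; injectivity and surjectivity check directly, using $\langle T_{12}e_0,e_0\rangle=1$.
\item It is conjugate bi-geometric.
\item It has $\sigma_e(T_1)=\mathbb{T}$, being a finite-rank perturbation of $S\oplus S^*$.
\item It satisfies the norm hypothesis with $M=1$, since $\|S^n\|=\|(S^*)^n\|=1$ and $S^*S=I$.
\end{itemize}
Yet $h=\sum_n 2^{-n}e_n$ gives $T_1(0\oplus h)=\tfrac12(0\oplus h)$. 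So $2\in\sigma(T_1^{-1})$ and $\|T_1^{-n}\|\ge 2^n$. No estimate of $T^{-n}$ that ignores where $\sigma(T)$ lies can succeed.

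For the same reason your second sub-step cannot be repaired. Lemma \ref{keni} does not apply to $T^n$, because $\dim\ker(A^n)^*=\dim\ker B^n=n$. The correction term then has rank $n$ and requires inverting the $n\times n$ compression of the $(1,2)$-entry of $T^n$ from $\ker B^n$ to $\ker(A^n)^*$; that is exactly where exponential growth can enter. Strengthening the hypothesis to bounds on $((A^n)^*A^n)^{-1}$ and $(B^n(B^n)^*)^{-1}$ changes the theorem. It also would not help, since $T_1$ satisfies those stronger bounds as well.

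\emph{What is missing is a case split on the spectrum.} Since $r(T)\le 1$ and $\mathbb{T}=\sigma_e(T)\subseteq\sigma(T)$, either $\sigma(T)\neq\mathbb{T}$ or $\sigma(T)=\mathbb{T}$.
\begin{itemize}
\item If $\sigma(T)\neq\mathbb{T}$, pick $\lambda\in\sigma(T)\setminus\sigma_e(T)$. Then $\ker(T-\lambda)$, or the closed proper range of $T-\lambda$, is a nontrivial hyperinvariant subspace (Lemma \ref{4.12}).
\item If $\sigma(T)=\mathbb{T}$, two-sided subexponential growth does hold, but the proof must use $\sigma(T^{-1})\subseteq\mathbb{T}$. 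The paper writes $T^{-1}=W_0+u\otimes v$ with $W_0=\begin{pmatrix}L & -LT_{12}R\\ 0 & R\end{pmatrix}$, $L=(A^*A)^{-1}A^*$, $R=B^*(BB^*)^{-1}$. Lemma \ref{yinli2} then expresses the resolvent of this rank-one perturbation through $(I-zW_0)^{-1}$ and the scalar function $d(z)=1-z\langle (I-zW_0)^{-1}u,v\rangle$. This $d$ is zero-free on $\mathbb{D}$ precisely because of the spectral assumption. Harnack's inequality bounds $1/|d|$, and a Cauchy estimate gives growth exponent $1/(2-\delta)$.
\end{itemize}

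Your worry about $(A^{+})^n$ has a one-line fix. We have $A^{+}(A^{+})^*=(A^*A)^{-1}$ and $\|(A^*A)^{-n}\|=\|(A^*A)^{-1}\|^n$, so the hypothesis forces $\|(A^*A)^{-1}\|\le 1$ and hence $\|A^{+}\|\le 1$. Likewise $\|B^{+}\|\le 1$.

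Once you have two-sided growth in the case $\sigma(T)=\mathbb{T}$, citing Wermer is a legitimate shortcut for the paper's Gevrey-bump and Beurling-algebra construction, which in effect reproves Wermer's theorem here. Your argument for $A^*,B\in B_1(\mathbb{D})$ is fine once you note $\|Ax\|\ge\|x\|$, which follows from $A^*A\ge I$. It differs from the paper's route, which uses Fredholmness of $T-\lambda$ and weakly null approximate eigenvectors.
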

		
		Before proving the main theorem of this section, we first introduce the following lemmas.

		\begin{lemma}\label{yinli1}
			Let
		\[
		T=\begin{pmatrix}
			A & T_{12}\\
			0 & B
		\end{pmatrix}\in \mathcal{B}(\mathcal{H}_1\oplus \mathcal{H}_{2}).
		\]
		If for all \(n\in\mathbb N_0\),
		\[
		\max\{\|A^n\|,\|B^n\|\}=O(e^{a n^\alpha}),
		\]
		where \(\alpha\in(0,1)\) and \(a>0\), then
		\[
		\|T^n\|=O(e^{b n^\alpha}),
		\]
		where \(b\) is a constant.
		\end{lemma}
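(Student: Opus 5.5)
The plan is to compute the powers of $T$ explicitly in block form and bound the off-diagonal entry by a convolution sum. An induction on $n$ gives
\[
T^n=\begin{pmatrix}
	A^n & X_n\\
	0 & B^n
\end{pmatrix},\qquad X_n=\sum_{k=0}^{n-1}A^{k}T_{12}B^{\,n-1-k}\quad(n\ge 1),
\]
with $X_0=0$. The inductive step is $X_{n+1}=AX_n+T_{12}B^n$, or equivalently $X_{n+1}=A^nT_{12}+X_nB$. Consequently
\[
\|T^n\|\le \|A^n\|+\|B^n\|+\|X_n\|,
\]
where the operator norm of a block matrix is controlled by the sum of the norms of its entries. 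It therefore suffices to bound $\|X_n\|$.

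By hypothesis there are constants $C>0$ and $a>0$ with $\max\{\|A^k\|,\|B^k\|\}\le Ce^{ak^\alpha}$ for all $k\in\mathbb N_0$. The $O$-bound holds for large $k$; enlarging $C$ absorbs the finitely many remaining $k$. Then
\[
\|X_n\|\le \|T_{12}\|\sum_{k=0}^{n-1}C^2e^{a(k^\alpha+(n-1-k)^\alpha)}.
\]

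The key elementary inequality is subadditivity of $t\mapsto t^\alpha$ on $[0,\infty)$ for $\alpha\in(0,1)$. Since $t^\alpha$ is concave with $0^\alpha=0$, we have $(s+t)^\alpha\le s^\alpha+t^\alpha$. That is the wrong direction here, so instead I will use
\[
k^\alpha+(n-1-k)^\alpha\le 2^{1-\alpha}(n-1)^\alpha\le 2^{1-\alpha}n^\alpha,
\]
which follows from concavity by Jensen's inequality, $\tfrac12(s^\alpha+t^\alpha)\le(\tfrac{s+t}{2})^\alpha$. This yields
\[
\|X_n\|\le C^2\|T_{12}\|\,n\,e^{2^{1-\alpha}a n^\alpha}.
\]

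Finally, the polynomial factor $n$ is absorbed into the subexponential rate. Because $\log n=o(n^\alpha)$, we have $n\le C'e^{n^\alpha}$ for all $n$ and some constant $C'$. Combining the bounds gives $\|T^n\|=O(e^{bn^\alpha})$ with $b=2^{1-\alpha}a+1$; any $b>2^{1-\alpha}a$ would do.

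None of these steps is a real obstacle. The only point that needs care is recognizing that the exponent in the convolution sum must be handled by concavity through Jensen's inequality, rather than by the naive bound $k^\alpha+(n-1-k)^\alpha\le 2n^\alpha$. In fact even that crude bound suffices, with $b=2a+1$, since the statement only asks for some constant $b$, so I would present whichever is cleaner.
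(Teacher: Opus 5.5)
Your proof is correct and follows essentially the same route as the paper: the block formula for $T^n$, the entrywise triangle inequality, a termwise bound on the convolution sum, and absorbing the factor $n$ via $\log n = o(n^\alpha)$. The only difference is your Jensen refinement $k^\alpha+(n-1-k)^\alpha\le 2^{1-\alpha}(n-1)^\alpha$; the paper uses the cruder bound $2(n-1)^\alpha$ and obtains $b=2a+1$, exactly as in your closing remark.
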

		
		\begin{proof}
			For \(n\ge 1\), it is clear that
		\[
		T^n=\begin{pmatrix}
			A^n & \displaystyle\sum_{j=0}^{n-1}A^j T_{12}B^{n-j-1}\\
			0 & B^n
		\end{pmatrix}.
		\]
		Then there exist constants \(C_1,C_2>0\) such that
		\[
		\begin{aligned}
			\|T^n\|
			&\le C_1 e^{an^\alpha}+C_2 e^{an^\alpha}
			+\sum_{j=0}^{n-1}\|A^j\|\|T_{12}\|\|B^{n-j-1}\|\\
			&\le C_1 e^{an^\alpha}+C_2 e^{an^\alpha}
			+\sum_{j=0}^{n-1}C_1 e^{aj^\alpha}\|T_{12}\|C_2 e^{a(n-j-1)^\alpha}\\
			&\le (C_1+C_2)e^{an^\alpha}
			+nC_1C_2\|T_{12}\|e^{2a(n-1)^\alpha}\\
			&\le (C_1+C_2)e^{an^{\alpha}}+C_1C_2\|T_{12}\|e^{\log n+2an^{\alpha}}	.					
		\end{aligned}
		\]
		Clearly, there exists \(N>1\) such that \(\log n\le n^{\alpha}\) for all \(n\ge N\). Hence,
		\[
		\|T^n\|\le M_{1} e^{(2a+1)n^{\alpha}},\ n\ge N,
		\]
		where \(M_{1}=C_1+C_2+C_1C_2\|T_{12}\|\). Consequently, there exist constants \(M\ge 1,b>0\) such that
		\[
		\|T^n\|\le M e^{b n^{\alpha}},\qquad n\in \mathbb{N}_{0}.
		\]
		\end{proof}
		
		\begin{lemma}
			\label{yinli2}
			Let
			\[
			W=\begin{pmatrix}A & T_{12}\\0 & B\end{pmatrix}\in \mathcal{B}(\mathcal{H}_1\oplus \mathcal{H}_{2})
			\]
			be a bounded operator such that
			\[
			\|W^n\| =O(e^{an^\alpha}),n\in \mathbb N_0
			,
			\]where $ 0<\alpha<1,\ a>0$.
			If there exists a rank-one operator \(N\) such that
			\(T=W+N\) is invertible and
			\(\sigma(T)\subset\mathbb{T}\), then there exist \(\beta\in(0,1),\ b>0\) such that
			\[
			\|T^n\|=O(e^{bn^{\beta}}), n\in \mathbb{N}_{0}.
			\]
		\end{lemma}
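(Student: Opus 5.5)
The plan is to control $\|T^n\|$ through the resolvent of $T$ outside the closed unit disc, obtained from the resolvent of $W$ by the rank-one (Sherman--Morrison) formula. The real work is a lower bound for the associated perturbation determinant.

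\emph{Step 1: resolvent of $W$.} From $\|W^n\|\le Ce^{an^\alpha}$ we get $r(W)\le 1$, so for $|z|>1$
\[
(z-W)^{-1}=\sum_{n\ge0}W^nz^{-n-1}.
\]
Put $\varepsilon=|z|-1$. Maximizing $an^\alpha-n\log(1+\varepsilon)$ over $n$ gives
\[
\|(z-W)^{-1}\|\le \exp\bigl(c_1\varepsilon^{-\gamma}\bigr),\qquad \gamma=\tfrac{\alpha}{1-\alpha},
\]
for $\varepsilon\in(0,1]$. For large $|z|$ the resolvent is bounded and tends to $0$.

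\emph{Step 2: perturbation determinant.} Write $N=u\otimes v$, i.e. $Nx=\langle x,v\rangle u$, and set
\[
d(z)=1-\langle (z-W)^{-1}u,v\rangle,\qquad |z|>1.
\]
Then $d$ is holomorphic on $\{|z|>1\}\cup\{\infty\}$, satisfies $d(\infty)=1$, and $|d(z)|\le 1+\|u\|\|v\|e^{c_1\varepsilon^{-\gamma}}$. Moreover $d$ has no zeros there. Indeed, if $d(z_0)=0$, then $x=(z_0-W)^{-1}u\ne0$ and
\[
(z_0-T)x=u-\langle x,v\rangle u=d(z_0)u=0,
\]
so $z_0\in\sigma(T)$, contradicting $\sigma(T)\subset\mathbb T$. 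When $d(z)\neq0$, the Sherman--Morrison formula gives
\[
(z-T)^{-1}=(z-W)^{-1}+d(z)^{-1}\,(z-W)^{-1}u\otimes (\bar z-W^*)^{-1}v .
\]

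\emph{Step 3: lower bound for $|d|$ (the main obstacle).} Substitute $w=1/z$, so $f(w)=d(1/w)$ is zero-free and holomorphic on $\mathbb D$, with $f(0)=1$ and $\log|f(w)|\le c_2(1-|w|)^{-\gamma}$. Since $\mathbb D$ is simply connected, $g=\log f$ is holomorphic with $g(0)=0$ and $\operatorname{Re} g\le c_2(1-|w|)^{-\gamma}$. Apply Borel--Carath\'eodory on the disc of radius $\rho=(1+|w|)/2$. This yields
\[
|g(w)|\le \frac{2|w|}{\rho-|w|}\sup_{|\zeta|=\rho}\operatorname{Re} g\le c_3(1-|w|)^{-\gamma-1},
\]
hence $|d(z)|^{-1}\le \exp\bigl(c_3\varepsilon^{-\gamma-1}\bigr)$ for $|z|\in(1,2]$. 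Combined with Step 1, this gives
\[
\|(z-T)^{-1}\|\le \exp\bigl(c_4\varepsilon^{-\gamma'}\bigr),\qquad \gamma'=\gamma+1 .
\]
I expect this step to need the most care: sharpening the exponent is unnecessary, but the constants must be checked to depend only on $\varepsilon$.

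\emph{Step 4: Cauchy integral.} For $n\ge1$ and $r=1+\varepsilon>1$,
\[
T^n=\frac1{2\pi i}\oint_{|z|=r}z^n(z-T)^{-1}\,dz,
\]
so $\|T^n\|\le r^{n+1}\exp(c_4\varepsilon^{-\gamma'})\le 2\exp(n\varepsilon+c_4\varepsilon^{-\gamma'})$. Choosing $\varepsilon=n^{-1/(1+\gamma')}$ gives
\[
\|T^n\|\le C\exp\bigl(b\,n^{\beta}\bigr),\qquad \beta=\tfrac{\gamma'}{1+\gamma'}\in(0,1),
\]
which is the claimed subexponential bound for all $n\in\mathbb N_0$ after adjusting $C$ for small $n$. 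Note that invertibility of $T$ is only used through $\sigma(T)\subset\mathbb T$; negative powers are not needed here.
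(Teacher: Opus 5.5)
Your proposal is correct and follows essentially the same route as the paper. The paper works with $(I-zT)^{-1}=\sum_n z^nT^n$ on $\mathbb D$ and with $d(z)=1-z\langle (I-zW)^{-1}u,v\rangle$, which is exactly your $d$ after $z\mapsto 1/z$; it uses the same rank-one inversion formula and the same zero-freeness argument, and bounds $1/|d|$ by Harnack's inequality for $M-\log|d|$ where you use Borel--Carath\'eodory on $\log d$, with both arriving at the same exponent $\beta=\frac{1}{2-\alpha}$.
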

		
		\begin{proof}
			Obviously,
			\[
			(I-zT)^{-1}=\sum_{n=0}^{\infty} z^n T^n.
			\]
			By the Cauchy integral formula, for \(r\in(0,1)\), we have
			\[
			T^n=\frac{1}{2\pi i}\oint_{|z|=r}\frac{(I-zT)^{-1}}{z^{n+1}}dz, \qquad n\in \mathbb{N}_{0}.
			\]
			Hence
			\[
			\|T^n\|\le r^{-n}\sup_{|z|=r}\|(I-zT)^{-1}\|, \qquad n\in \mathbb{N}_{0}.
			\]
			Hence, we are reduced to estimating \(\|(I-zT)^{-1}\|\).
			From \(\|W^n\|\le Ce^{an^\alpha}, \ C>0\), we get \(r(W)\le1\). Thus for all \(0<|z|<1\), we have \(\left|\frac1z\right|>1\) and \(\frac1z\in\rho(W)\). Hence
			\[
			I-zW=-z\left(W-\frac1z I\right).
			\]
			Since \(W-\frac1z I\) is invertible, \((I-zW)\) is invertible.
			
			\textbf{Step 1}: Compute \((I-zT)^{-1}\).
			Let
			\[
			R(z)=(I-zW)^{-1}=\sum_{k=0}^\infty z^k W^k.
			\]
			Let \(N=u\otimes v\) and
			\[
			d(z)=1-z\langle R(z)u,v\rangle,\qquad z\in\mathbb D.
			\]
			Clearly,
			\[
			|d(z)|\le1+|z|\|N\|\|R(z)\|.
			\]
			Since \(\sigma(T)\subset \mathbb T\), we claim that \(d(z)\) has no zeros on \(\mathbb D\).
			
			If there exists \(z_0\in\mathbb D\) such that \(d(z_0)=0\), let \(x=R(z_0)u\). Then
			\[
			(I-z_0W)x=u\ne0,
			\]
			so
			\[
			\begin{aligned}
				(I-z_0T)x&=(I-z_0W-z_0N)R(z_0)u\\
				&=u-z_0NR(z_0)u\\
				&=u-z_0\langle R(z_0)u,v\rangle u\\
				&=\big(1-z_0\langle R(z_0)u,v\rangle\big)u\\
				&=d(z_0)u\\
				&=0.
			\end{aligned}
			\]
			Thus \(I-z_0T\) is not invertible, which contradicts \(\sigma(T)\subset\mathbb T\). Hence \(d\) has no zeros in \(\mathbb D\).
			
		Obviously, we have
			\[
			\begin{aligned}
				(I-zT)R(z)u&=(I-zW-zN)R(z)u\\
				&=(I-zW)R(z)u-z\langle R(z)u,v\rangle u\\
				&=u-z\langle R(z)u,v\rangle u\\
				&=d(z)u.
			\end{aligned}
			\]
			Let
			\[
			X=R(z)+\frac{z}{d(z)}R(z)NR(z).
			\]
		We have
			\[
			\begin{aligned}
				NR(z)Nx&=NR(z)\langle x,v\rangle u\\
				&=\langle x,v\rangle NR(z)u\\
				&=\langle x,v\rangle\langle R(z)u,v\rangle u\\
				&=a(z)Nx,
			\end{aligned}
			\]where $a(z)=\langle R(z)u,v\rangle$,
			and
			\[
			1-za(z)=d(z),\qquad	NR(z)N=a(z)N.
			\]
			Thus
			\[
			\begin{aligned}
				(I-zT)X
				&=(I-zW-zN)\left(R(z)+\frac{z}{d(z)}R(z)NR(z)\right)\\
				&=I+\frac{z}{d(z)}NR(z)-zNR(z)-\frac{z^2}{d(z)}NR(z)NR(z).
			\end{aligned}
			\]
			Since \(NR(z)NR(z)=a(z)NR(z)\), we get
			\[
			\begin{aligned}
				(I-zT)X
				&=I+\left(\frac{z}{d(z)}-z-\frac{z^2a(z)}{d(z)}\right)NR(z)\\
				&=I+\left(\frac{z(1-za(z))}{d(z)}-z\right)NR(z)\\
				&=I+\left(\frac{zd(z)}{d(z)}-z\right)NR(z)\\
				&=I.
			\end{aligned}
			\]
			Similarly, \(X(I-zT)=I\). Hence \((I-zT)^{-1}=X\). Therefore
			\[
			\|(I-zT)^{-1}\|\le\|R(z)\|+\frac{\|N\|}{|d(z)|}\|R(z)\|^2.
			\]
			\textbf{Step 2}: Estimate \(\dfrac{1}{|d(z)|}\) and \(\|R(z)\|\).
			
			\textbf{First}, we estimate \(\sup_{|z|\le r}\|R(z)\|\).
			
			Let
			\[
			p=\frac{\alpha}{1-\alpha}>0,
			\]
			and let \(\delta=1-r\). Clearly there exists a constant \(C_{\alpha}>0\) such that
			\(k^{\alpha}-\dfrac{\delta}{2a}k\le C_{\alpha}\delta^{-p}\).
			
		It is clear that 
			\[
			\|R(z)\|\le \sum_{k=0}^\infty \|W^k\|\cdot|z|^k
			\le C\sum_{k=0}^\infty e^{ak^\alpha} r^k,\qquad |z|\le r,
			\]
			and \(r=1-\delta\le e^{-\delta}\), so \(r^k\le e^{-k\delta}\). We have
			\[
			\|R(z)\|\le \sum_{k=0}^\infty Ce^{ak^\alpha-k\delta}
			\le \sum_{k=0}^\infty Ce^{aC_\alpha \delta^{-p}-\frac{\delta k}{2}}
			=\sum_{k=0}^\infty C e^{aC_\alpha\delta^{-p}} e^{-\frac{\delta k}{2}}
			=C e^{aC_\alpha\delta^{-p}} \frac{1}{1-e^{-\frac{\delta}{2}}}
			\le \frac{4C}{\delta}e^{aC_\alpha\delta^{-p}}.
			\]
			Let \(t=\frac1\delta>1\). Then	$\frac1\delta\le e^{\frac1p\delta^{-p}}.$
		
			Therefore
			\[
			\|R(z)\|\le 4C e^{\left(\frac1p+aC_\alpha\right)\delta^{-p}},
			\qquad \delta=1-r,\ |z|\le r.
			\]
			So
			\[
			\sup_{|z|\le r}\|R(z)\| \le M_1 e^{M_2\delta^{-p}},
			\qquad M_1=4C,\ M_2=\frac1p+aC_\alpha.\tag{4.1}\label{1.1.1}
			\]
			
			\textbf{Second}, we estimate \(|\dfrac{1}{d(z)}|\).
			
			From (\ref{1.1.1}), we obtain
			\[
			|d(z)|\le (1+\|N\|M_{1})e^{M_{2}(1-r)^{-p}}\le e^{K_{2}(1-r)^{-p}},\ 0<r<1,
			\]
			where \(K_{2}>0\) is a constant, and \(d(0)=1\). Let \(h(z)=\log|d(z)|\). Then \(h(0)=0\), and
			\[
			\frac{1}{|d(z)|}=e^{-h(z)}.
			\]
			Take \(s=\frac{1+r}{2}\), and let
			\[
			M=\frac{K_2+1}{(1-s)^p}.
			\]
			Thus
			\[
			h(z)\le K_2(1-s)^{-p}\le \frac{K_2+1}{(1-s)^p},\qquad z\in B(0,s).
			\]
			Therefore \(u(z)=M-h(z), z\in B(0,s)\) is positive and harmonic. By Harnack's inequality, we have
			\[
			M-h(z)\le \frac{s+r}{s-r}M.
			\]
			So
			\[
			-h(z)\le \frac{s+r}{s-r}M-M=\frac{2rM}{s-r}.
			\]
			Since
			\[
			s-r=\frac{1-r}{2},\qquad 1-s=\frac{1-r}{2},
			\]
			we obtain
			\[
			-h(z)\le \frac{4}{1-r}\cdot\frac{2^p(K_2+1)}{(1-r)^p}
			=\frac{K_3}{(1-r)^{p+1}},
			\]
			where
			$
			K_3=4\cdot2^p(K_2+1)
			$
			is a fixed constant. Therefore
			\[
			\sup_{|z|\le r}\frac{1}{|d(z)|}\le e^{\frac{K_3}{(1-r)^{p+1}}}.
			\]
			Thus
			\[
			\|(I-zT)^{-1}\|\le M_1 e^{M_2(1-r)^{-p}}
			+\frac{\|N\|}{|d(z)|}M_1^2 e^{2M_2(1-r)^{-p}}.\tag{4.2}\label{1.1.2}
			\]
			\textbf{Step 3}: Estimate \(\sup_{|z|\le r}\|(I-zT)^{-1}\|\) and \(\|T^n\|, n\in \mathbb{N}_{0}\).
			
			From (\ref{1.1.2}), we get
			\[
			\sup_{|z|\le r}\|(I-zT)^{-1}\|
			\le M_1 e^{M_2(1-r)^{-p}}
			+\|N\|e^{\frac{K_3}{(1-r)^{p+1}}}M_1^2 e^{2M_2(1-r)^{-p}}.
			\]
			Since
			\[
			\frac{1}{(1-r)^{p+1}}\ge\frac{1}{(1-r)^p},
			\]
			we have
			\[
			\sup_{|z|\le r}\|(I-zT)^{-1}\|
			\le M_1 e^{M_2(1-r)^{-(p+1)}}
			+\|N\|M_1^2 e^{(K_3+2M_2)(1-r)^{-(p+1)}}
			\le M_3 e^{M_4(1-r)^{-(p+1)}},
			\]
			where
			\[
			M_3=M_1+\|N\|M_1^2,\qquad
			M_4=M_2+K_3+2M_2.
			\]
			Therefore
			\[
			\|T^n\|\le r^{-n}\cdot M_3 e^{M_4(1-r)^{-(p+1)}},
			\qquad \text{for all }0<r<1, \qquad n\in \mathbb{N}_{0}.
			\]
			Let
			\[
			q=p+1=\frac{1}{1-\alpha}.
			\]
			For \(n\ge1\), take
			\[
			\delta_n=\frac12 n^{-\frac1{q+1}},\qquad r_n=1-\delta_n\in(0,1).
			\]
			It is well known that 
			\[
			-\log(1-\delta_n)\le2\delta_n.
			\]
			Therefore
			\[
			r_n^{-n}=e^{-n\log r_n}
			=e^{n(-\log(1-\delta_n))}
			\le e^{2n\delta_n}=e^{n^{\frac{q}{q+1}}},
			\]
			and
			\[
			\frac{1}{(1-r_n)^q}=\delta_n^{-q}
			=2^q n^{\frac{q}{q+1}}.
			\]
			Thus
			\[
			\|T^n\|
			\le e^{n^{\frac{q}{q+1}}} M_3 e^{M_4 2^q n^{\frac{q}{q+1}}}
			=M_3 e^{(1+M_42^q)n^{\frac{q}{q+1}}}
			=M_3 e^{M_5 n^{\frac{q}{q+1}}},\qquad n\in \mathbb{N}_{0},
			\]
			where \( M_5=1+M_42^q.\)
			Moreover,
			\[
			\frac{q}{q+1}
			=\frac{\frac{1}{1-\alpha}}{\frac{1}{1-\alpha}+1}
			=\frac{1}{2-\alpha}\in(0,1).
			\]
			Hence \(\|T^{n}\|=O(e^{M_{5} n^{\beta}}),n\in \mathbb{N}_{0},\ \  \beta\in (0,1)\). This completes the proof.
		\end{proof}
		
		\begin{lemma}\cite{MonsalveLopez2020Bishop}          \label{4.12}
			Let $T\in \mathcal{B}(X)$ be a Banach space operator. If $\sigma_\mathrm{e}(T)\neq \sigma(T)$, then $T$ has non-trivial hyperinvariant subspaces. Furthermore, if $\sigma_{\mathrm{le}}(T)\neq \sigma_{\mathrm{re}}(T)$, then the lattice $\mathrm{Lat}(T)$ is rich.
		\end{lemma}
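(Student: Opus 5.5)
The first assertion follows from a point $\lambda\in\sigma(T)\setminus\sigma_e(T)$, and the second from a point of the symmetric difference $\sigma_{\mathrm{le}}(T)\,\triangle\,\sigma_{\mathrm{re}}(T)$. In each case the invariant subspaces will be kernels and ranges of $T-\lambda$, or subspaces squeezed around them. Throughout, $X$ is infinite-dimensional, as in the setting of the paper. For richness I use the notion from \cite{MonsalveLopez2020Bishop}: $\mathrm{Lat}(T)$ contains a copy of the lattice of all closed subspaces of some infinite-dimensional Banach space.

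\emph{First assertion.} Pick $\lambda\in\sigma(T)\setminus\sigma_e(T)$. Then $T-\lambda$ is Fredholm but not invertible. By the Banach-space analogue of Theorem \ref{thm:2.4}, $\operatorname{ran}(T-\lambda)$ is closed and both $\ker(T-\lambda)$ and $X/\operatorname{ran}(T-\lambda)$ are finite-dimensional. Non-invertibility forces either $\ker(T-\lambda)\neq\{0\}$ or $\operatorname{ran}(T-\lambda)\neq X$.

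In the first case, $\ker(T-\lambda)$ is a nonzero finite-dimensional, hence proper, subspace of the infinite-dimensional space $X$. If $S$ commutes with $T$, then $S$ commutes with $T-\lambda$, so $S\ker(T-\lambda)\subset\ker(T-\lambda)$; thus the kernel is hyperinvariant.

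In the second case, $\operatorname{ran}(T-\lambda)$ is closed and proper. It is nonzero, since otherwise $T=\lambda I$, which would give $\sigma_e(T)=\{\lambda\}=\sigma(T)$. It is hyperinvariant because $S(T-\lambda)x=(T-\lambda)Sx$ for every $S$ commuting with $T$.

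\emph{Second assertion.} Take $\lambda$ in the symmetric difference; by symmetry there are two cases.

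\textbf{Case 1:} $T-\lambda$ is left semi-Fredholm but not right semi-Fredholm. Then $\operatorname{ran}(T-\lambda)$ is closed, $\ker(T-\lambda)$ is finite-dimensional, and the quotient $X/\operatorname{ran}(T-\lambda)$ is infinite-dimensional. Let $M$ be any closed subspace with $M\supset\operatorname{ran}(T-\lambda)$. Then $TM=((T-\lambda)+\lambda)M\subset\operatorname{ran}(T-\lambda)+M=M$, so $M$ is invariant. The order-isomorphism $M\mapsto M/\operatorname{ran}(T-\lambda)$ identifies this family with the full lattice of closed subspaces of the infinite-dimensional space $X/\operatorname{ran}(T-\lambda)$. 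Hence $\mathrm{Lat}(T)$ is rich.

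\textbf{Case 2:} $T-\lambda$ is right semi-Fredholm but not left semi-Fredholm. Then $\ker(T-\lambda)$ is infinite-dimensional. Every closed subspace of $\ker(T-\lambda)$ is invariant, because $T$ acts there as $\lambda I$. Again this gives a copy of the lattice of all closed subspaces of an infinite-dimensional space.

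\emph{Expected obstacle.} The main care point is matching the conventions for $\sigma_{\mathrm{le}}$ and $\sigma_{\mathrm{re}}$ with left and right semi-Fredholmness. In Case 1 I must make sure the failure of right semi-Fredholmness, given a closed range, really means infinite codimension rather than non-closed range. With the standard characterizations this holds: a left semi-Fredholm operator has closed range by definition, and among closed-range operators, right semi-Fredholm means finite codimension. So the argument goes through either way the two spectra are labelled.
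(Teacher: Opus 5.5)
The paper gives no proof of this lemma. It is imported verbatim from \cite{MonsalveLopez2020Bishop}, and only the first assertion is used, in the case $\sigma(T)\neq\mathbb{T}$ of the proof of Theorem \ref{dingli00}.

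Your argument is correct and is the standard one behind that citation. For a Fredholm point $\lambda\in\sigma(T)\setminus\sigma_e(T)$, either the finite-dimensional kernel or the closed proper range of $T-\lambda$ is hyperinvariant. For a point of $\sigma_{\mathrm{le}}(T)\,\triangle\,\sigma_{\mathrm{re}}(T)$, one of the intervals $[\operatorname{ran}(T-\lambda),X]$ or $[\{0\},\ker(T-\lambda)]$ is a sublattice of $\mathrm{Lat}(T)$. It is isomorphic to the lattice of all closed subspaces of an infinite-dimensional space.

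Your check that closed range plus failure of the other semi-Fredholm condition forces infinite codimension (resp. an infinite-dimensional kernel) is the step that is needed. It also survives the Calkin-algebra convention for $\sigma_{\mathrm{le}}$ and $\sigma_{\mathrm{re}}$ on Banach spaces, because a finite-dimensional kernel and a finite-codimensional closed range are automatically complemented.

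If "rich" is read in the stricter sense of containing a copy of the lattice of closed subspaces of $\ell^2$, your construction still suffices in the Hilbert-space setting the paper needs. There $\ker(T-\lambda)$ and $X/\operatorname{ran}(T-\lambda)\cong\operatorname{ran}(T-\lambda)^{\perp}$ are infinite-dimensional Hilbert spaces.
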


		Now we give the proof of the main theorem of this section.

			\begin{proof}
			
			First, we prove that \(A^{*},B\in B_{1}(\mathbb{D})\).
			For \(\lambda\in \mathbb{D}\), we claim that \(\lambda\notin \sigma_{\mathrm{ap}}(A)\).
			Suppose \(\lambda\in \sigma_{\mathrm{ap}}(A)\). Then there exists a sequence \(\{x_n\}_{n=1}^{\infty}\subset \mathcal{H}_1\) such that \(\|(A-\lambda)x_n\|\to 0\) and $\|x_n\|=1$. Taking a weakly convergent subsequence, still denoted by \(\{x_n\}_{n=1}^\infty\), we assume that \(x_n \rightharpoonup x\).
			
			Since \((A-\lambda)x_n \rightharpoonup (A-\lambda)x\), we have \((A-\lambda)x=0\). By \(A^*\in B_1(O_t), t>0\), it follows that \(x=0\). Let \(z_n=\begin{pmatrix}x_n\\0\end{pmatrix}\). Then \(z_n\rightharpoonup 0\), and
			\[
			\|(T-\lambda)z_n\|=\|(A-\lambda)x_n\|\to 0.
			\]
			Since \(\sigma_e(T)= \mathbb{T}\), \(T-\lambda\) is a Fredholm operator. Let \(P:\mathcal{H}_1\oplus \mathcal{H}_2\to \ker(T-\lambda)\) be the orthogonal projection. Then \(P\) is a finite-rank operator. Moreover, there exists a constant \(c_{\lambda}>0\) such that
			\begin{align*}
				\|(T-\lambda)z_n\| &= \|(T-\lambda)Pz_n + (T-\lambda)(I-P)z_n\| \\
				&= \|(T-\lambda)(I-P)z_n\| \\
				&\ge c_\lambda \|(I-P)z_n\|.
			\end{align*}
			Since \(Pz_n \xrightarrow{\|\cdot\|} P(0)=0\), we have \(\|(I-P)z_n\|\to 1\). This contradicts \(\|(T-\lambda)z_n\|\to 0\).
			Therefore, \(\lambda\notin \sigma_{\mathrm{ap}}(A)\), which implies that \(\operatorname{ran}(A^*-\overline{\lambda}) = \mathcal{H}_1\). Since \(\sigma_e(T^*)\subset \mathbb{T}\), a similar calculation shows that \(B^{*}-\overline{\lambda}\) is bounded below, and hence \(\operatorname{ran}(B-\lambda)=\mathcal{H}_2\) for \(\lambda\in \mathbb{D}\).
			
			Fix \(\lambda_0\in \mathbb{D}\). Since \(B-\lambda_0\) is surjective, there exists a right inverse \(R_0\) such that \((B-\lambda_0)R_0=I\). Note that
			\[
			B-\lambda = (B-\lambda_0)\big[I-(\lambda-\lambda_0)R_0\big].
			\]
			When \(|\lambda-\lambda_0|\cdot\|R_0\|<1\), we get
			\[
			\ker(B-\lambda)=\big[I-(\lambda-\lambda_0)R_0\big]^{-1}\ker(B-\lambda_0).
			\]
			Thus, \(\dim\ker(B-\lambda)=\dim\ker(B-\lambda_0)\), meaning that \(\dim\ker(B-\lambda)\) is locally constant on \(\mathbb{D}\). Since \(\mathbb{D}\) is connected and \(\dim\ker B=1\), we have \(\dim\ker(B-\lambda)=1\) for all \(\lambda\in \mathbb{D}\).
			Similarly, \(\dim\ker(A^*-\lambda)=1\) for all \(\lambda\in \mathbb{D}\).
			Therefore, \(A^*, B\in  B_1(\mathbb{D})\).

			Next, we prove that \(T\) and \(T^{-1}\) have nontrivial hyperinvariant subspaces.

			If \(\sigma(T)\ne \mathbb{T}\), then \(\sigma(T^{-1})\ne \mathbb{T}\). By Lemma \ref{4.12}, \(T\) and \(T^{-1}\) have nontrivial hyperinvariant subspaces.
			
	Now, we consider the case where \(\sigma(T)=\mathbb{T}\).
			Since \(T\) is invertible, there exist \(e_0\in\ker A^*\) and \(f_0\in\ker B\) such that \(\langle T_{12}f_0,e_0\rangle=1\) and $\|e_0\|=1$.
			Set \(P=e_0\otimes e_0\), $L=(A^{*}A)^{-1}A^{*}$, $R=B^{*}(BB^{*})^{-1}$, \(J=f_0\otimes e_0\).
			Clearly, \(LA=I\), \(AL=I-P\), \(BR=I\), \(BJ=0\) and $\|L^{n}\|,\|R^{n}\|\le M^{2}e^{2bn^{\delta}}$.
			
			A direct computation yields
			\[
			PT_{12}J x=\langle x,e_0\rangle PT_{12} f_0=\langle x,e_0\rangle\langle T_{12} f_0,e_0\rangle e_0=Px,\ x\in \mathcal{H}_1,
			\]
			hence \(PT_{12}J=P\). Let
			\[
			W_0=\begin{pmatrix}
				L & -LT_{12}R\\
				0 & R
			\end{pmatrix},\qquad
			Q=\begin{pmatrix}
				-LT_{12}\\
				I
			\end{pmatrix}J\begin{pmatrix}
				I&-T_{12}R
			\end{pmatrix}.
			\]
			By Lemma \ref{yinli1}, there exist constants \(K_1,K_2>0\) such that
			\[
			\|T^n\|\le K_1 e^{K_2 n^\delta},\qquad 
			\|W_0^n\|\le K_1 e^{K_2 n^\delta},\quad n\in\mathbb{N}_0,\ \delta\in(0,1).
			\]
			Note that 
			\[
			TW_0=\begin{pmatrix}
				AL & -ALT_{12}R+T_{12}R\\
				0 & BR
			\end{pmatrix}
			=\begin{pmatrix}
				I-P & PT_{12}R\\
				0 & I
			\end{pmatrix},
			\]
			and
			\[
			\begin{aligned}
				TQ&=\begin{pmatrix}
					-ALT_{12}+T_{12}\\
					B
				\end{pmatrix}J\begin{pmatrix}
					I&-T_{12}R
				\end{pmatrix}\\
				&=\begin{pmatrix}
					PT_{12}J\\
					BJ
				\end{pmatrix}\begin{pmatrix}
					I&-T_{12}R
				\end{pmatrix}\\
				&	=\begin{pmatrix}
					P\\
					0
				\end{pmatrix}\begin{pmatrix}
					I&-T_{12}R
				\end{pmatrix}\\
				&	=\begin{pmatrix}
					P & -PT_{12}R\\
					0 & 0
				\end{pmatrix}.
			\end{aligned}
			\]
			Thus \(T(W_0+Q)=I\). Consequently, \(T^{-1}=W_0+Q\).
			Obviously, for $(x,y)\in \mathcal{H}_{1}\oplus \mathcal{H}_{2}$, we obtain
			\[
			\begin{aligned}
				Q\begin{pmatrix}x\\y\end{pmatrix}
				&=\begin{pmatrix}-LT_{12}\\I\end{pmatrix}J\begin{pmatrix}
					I&-T_{12}R
				\end{pmatrix}\begin{pmatrix}x\\y\end{pmatrix}\\
				&	=\begin{pmatrix}-LT_{12}\\I\end{pmatrix}J(x-T_{12}Ry)\\
				&=\langle x-T_{12}Ry,e_0\rangle\begin{pmatrix}-LT_{12} f_0\\f_0\end{pmatrix}.
			\end{aligned}
			\]
			Set
			\[
			u=\begin{pmatrix}-LT_{12} f_0\\f_0\end{pmatrix},\quad
			v=\begin{pmatrix}e_0\\-R^*T_{12}^*e_0\end{pmatrix}.
			\]
			Then
			\[
			\begin{aligned}
				(u\otimes v)\begin{pmatrix}x\\y\end{pmatrix}
				&=\big(\langle x,e_0\rangle-\langle y,R^*T_{12}^*e_0\rangle\big)u\\
				&=\langle x-T_{12}Ry,e_0\rangle \begin{pmatrix}-LT_{12} f_0\\f_0\end{pmatrix}.
			\end{aligned}
			\]
			Hence \(Q=u\otimes v\).

			Since \(T^{-1}=W_0+Q\) and \(\sigma(T^{-1})=\mathbb{T}\), it follows from Lemma \ref{yinli2} that there exist constants \(K_3,K_4>0\) such that
			\[
			\|T^{-n}\|\le K_{3} e^{K_{4}n^\rho},\qquad \rho=\frac1{2-\delta}\in(0,1),\ n\in\mathbb{N}_0.
			\]
			Consequently, there exist constants \(C_1,C_2>0\) and \(\gamma\in(0,1)\) such that
			\[
			\|T^n\|\le C_1 e^{C_2 |n|^{\gamma}},\quad  n\in\mathbb{Z}.
			\]
			Choose a positive integer \(N\) such that
			\[
			\frac{N}{N+1}>\gamma.
			\]
			Set
			\[
			\theta=\frac{N}{N+1}>\gamma,\qquad s=1+\frac1N.
			\]
			Then \(\theta<1\), and \(\theta=\frac1s\). Let
			\[
			\eta(t)=
			\begin{cases}
				e^{-(1-t^2)^{-N}}, & |t|<1,\\[2mm]
				0, & |t|\ge1.
			\end{cases}
			\]
			\textbf{Step 1}. Prove that there exists a constant \(B>0\) such that
			\[
			\|\eta^{(j)}\|_\infty\le B^j (j!)^s,\qquad j\ge0.
			\]
			
			Fix \(|t|<1\), and set \(v=1-t^2>0\). Take
			$
			\varepsilon=\frac{1}{24N}.
			$
			For \(|z-t|\le \varepsilon v\), set
			$
			\xi=\frac{t^2-z^2}{v}.
			$
			Then
			$
			1-z^2=v(1+\xi).
			$
			Moreover,
			\[
			|\xi|
			\le \frac{|z-t||z+t|}{v}
			\le \varepsilon v\frac{|z+t|}{v}
			=\varepsilon |z+t|
			\le \varepsilon(2|t|+\varepsilon v)
			\le 3\varepsilon
			=\frac1{8N}.
			\]
			Clearly,
			\[
			|\log(1+\xi)|
			\le \sum_{m=1}^{\infty}\frac{|\xi|^m}{m}
			\le \frac{|\xi|}{1-|\xi|}.
			\]
			Thus
			\[
			|N\log(1+\xi)|
			\le \frac{N|\xi|}{1-|\xi|}
			\le N\dfrac{1}{8N-1}=\dfrac{1}{8-\frac{1}{N}}
			\le \frac17.
			\]
			It is well known that
			\[
			|e^w-1|\le e^{|w|}-1.
			\]
			Take \(w=N\log(1+\xi)\). Then
			\[
			\left|(1+\xi)^{-N}-1\right|
			=\left|e^{-N\log(1+\xi)}-1\right|
			\le e^{|N\log(1+\xi)|}-1
			\le e^{\frac{1}{7}}-1
			\le \frac12.
			\]
			And
			\[
			\operatorname{Re}(1+\xi)^{-N}
			=1+\operatorname{Re}\big((1+\xi)^{-N}-1\big)
			\ge 1-\left|(1+\xi)^{-N}-1\right|
			\ge \frac12.
			\]
			Since \(1-z^2=v(1+\xi)\), we have
			\[
			\operatorname{Re}(1-z^2)^{-N}\ge \frac12 v^{-N}.
			\]
			Hence
			\[
			\left|e^{-(1-z^2)^{-N}}\right|
			=e^{-\operatorname{Re}(1-z^2)^{-N}}
			\le e^{-\frac12 v^{-N}}.
			\]
			Let
			\[
			F(z)=e^{-\frac{1}{(1-z^2)^N}},\qquad z\in\mathbb C\setminus\{1,-1\}.
			\]
			Let \(\rho=\varepsilon v\). Since
			\[
			\rho=\varepsilon(1-t^2)
			=\varepsilon(1-|t|)(1+|t|)
			\le 2\varepsilon(1-|t|)
			<1-|t|,
			\]
			and
			\[
			\min\{|t-1|,|t+1|\}=1-|t|,
			\]
			we have \(z\in\mathbb C\setminus\{1,-1\}\) when \(|z-t|<\varepsilon v\). By Cauchy's integral formula, we get
			\[
			\eta^{(j)}(t)=F^{(j)}(t)
			=\frac{j!}{2\pi i}\int_{|z-t|=\varepsilon v}
			\frac{F(z)}{(z-t)^{j+1}}\,dz.
			\]
			Therefore
			\[
			|\eta^{(j)}(t)|
			\le \frac{j!}{2\pi}
			\frac{e^{-\frac12 v^{-N}}}{(\varepsilon v)^{j+1}}
			\cdot 2\pi(\varepsilon v)
			= j!\,(\varepsilon v)^{-j}e^{\frac{-v^{-N}}{2}}.
			\]
			Let \(x=v^{-N}\). Then
			\[
			|\eta^{(j)}(t)|
			\le j!\,\varepsilon^{-j}x^{\frac{j}{N}}e^{\frac{-x}{2}}.
			\]
			Maximizing \(x^{\frac{j}{N}}e^{\frac{-x}{2}}\) gives
			\[
			\sup_{x>0}x^{\frac{j}{N}}e^{\frac{-x}{2}}
			=\left(\frac{2j}{eN}\right)^{\frac{j}{N}}.
			\]
			Since \(j!\ge (\dfrac{j}{e})^j\), we obtain
			\[
			|\eta^{(j)}(t)|
			\le \big[\varepsilon^{-1}(\dfrac{2}{N})^{\frac{1}{N}}\big]^j (j!)^{1+\frac{1}{N}}.
			\]
			And \(|\eta(t)|\le1\). Take
			\[
			B=\max\{1,\varepsilon^{-1}\left(\frac{2}{N}\right)^{\frac{1}{N}}\},
			\]
			which yields the assertion.
			
			For fixed \(j\), as \(t\to\pm1\), \(v\to0\), and
			\[
			v^{-j}e^{\frac{-v^{-N}}{2}}\xrightarrow{v\to 0}0.
			\]
			Thus \(\eta\in C^\infty(\mathbb R)\).
			
			\textbf{Step 2}. Construct nonzero functions \(\varphi\) and \(\psi\).
			
			Take two distinct points in \(\sigma(T)\), denoted by \(e^{i\frac{\pi}{2}},e^{i\frac{3\pi}{2}}\), and set
			\(d=\frac{\pi}{4}\). Let
			\[
			\varphi(e^{it})=\eta\left(\frac{t-\frac{\pi}{2}}{d}\right),\qquad
			\psi(e^{it})=\eta\left(\frac{t-\frac{3\pi}{2}}{d}\right),\qquad t\in[0,2\pi].
			\]
			Clearly,
			\[
			\operatorname{supp}\varphi
			\subset\left[\dfrac{\pi}{4},\dfrac{3\pi}{4}\right],
			\]
			\[
			\operatorname{supp}\psi
			\subset\left[\dfrac{5\pi}{4},\dfrac{7\pi}{4}\right].
			\]
			Thus
			\[
			\varphi\psi=0,
			\]
			and
			\[
			\varphi(e^{i\frac{\pi}{2}})=\psi(e^{i\frac{3\pi}{2}})=e^{-1}\ne0.
			\]
			Moreover,
			\[
			|\varphi^{(j)}(t)|
			=\left|\frac{\eta^{(j)}\big(\frac{t-\frac{\pi}{2}}{d}\big)}{d^j}\right|
			\le \frac{\|\eta^{(j)}\|_\infty}{d^j}
			\le \left(\frac{B}{d}\right)^j (j!)^s
			=B_0^j (j!)^s,
			\]
			where
			$
			B_0=\frac{B}{d}.
			$
			Obviously, \(\varphi,\psi\) are bounded on \(\mathbb T\), so \(\varphi,\psi\in L^2(\mathbb T)\). Let
			\[
			a_k=\frac1{2\pi}\int_0^{2\pi}\varphi(e^{it})e^{-ikt}\,dt,\qquad
			b_k=\frac1{2\pi}\int_0^{2\pi}\psi(e^{it})e^{-ikt}\,dt.
			\]
			For \(k\ne0\), integrating by parts \(j\) times gives
			\[
			|a_k|
			\le \frac{1}{2\pi |k|^j}\int_0^{2\pi}|\varphi^{(j)}(t)|\,dt
			\le \frac{\|\varphi^{(j)}\|_\infty}{|k|^j}
			\le \frac{B_0^j(j!)^s}{|k|^j}
			\le \left(\frac{B_0 j^s}{|k|}\right)^j.
			\]
			Let
			$
			x=\left(\frac{|k|}{2B_0}\right)^{1/s}.
			$
			When \(x\ge1\), take \(j=[ x]\ge1\). Then
			\[
			\frac{B_0 j^s}{|k|}
			\le \frac{B_0\left(\big(\frac{|k|}{2B_0}\big)^{\frac{1}{s}}\right)^s}{|k|}
			=\frac12.
			\]
			Hence
			\[
			|a_k|\le 2^{-j}\le 2e^{-(\log 2)x}.
			\]
			When \(x<1\), i.e. \(\frac{|k|}{2B_0}<1\), there are only finitely many such \(k\). Therefore we may assume
			\[
			|a_k|\le 2e^{-b x}
			=2e^{-b\left(\frac{|k|}{2B_0}\right)^{\frac{1}{s}}}
			=2e^{-c_1|k|^\theta},
			\]
			where
			\[
			\theta=\frac1s<1,\qquad b=\log2,\qquad
			c_1=\frac{b}{(2B_0)^{\frac{1}{s}}}.
			\]
			Similarly,
			\[
			|b_k|\le 2e^{-c_2|k|^\theta},\qquad k\in\mathbb Z,
			\]where \(c_2>0\) is a constant.
			Thus
			\[
			|a_k|+|b_k|\le 4e^{-c|k|^\theta},\qquad k\in\mathbb Z,
			\]
			where \(c=\min\{c_1,c_2\}\). Consequently,
			\[
			\sum_{k\in\mathbb Z}(|a_k|+|b_k|)<+\infty.
			\]
			Let
			\[
			F(e^{it})=\sum_{k\in\mathbb Z}a_k e^{ikt},\qquad t\in [0,2\pi].
			\]
			Then
			\[
			|F(e^{it})|\le \sum_{k\in\mathbb Z}|a_k|<+\infty,
			\]
			so \(\sum_{k\in \mathbb{Z}}a_k e^{ikt}\) converges absolutely and uniformly, hence
			\[
			F(e^{it})\in L^2(\mathbb T),
			\]
			and
			\[
			\hat F(k)=\frac1{2\pi}\int_0^{2\pi}F(e^{it})e^{-ikt}\,dt=a_{k}=\widehat{\varphi}(k),\qquad k\in \mathbb{Z}.
			\]
			Thus \(F=\varphi\), i.e.
			\[
			\varphi(e^{it})=\sum_{k\in\mathbb Z}\hat\varphi(k)e^{ikt}\in L^2(\mathbb T),
			\]
			and the series converges absolutely and uniformly. Similarly,
			\[
			\psi(e^{it})=\sum_{k\in\mathbb Z}b_k e^{ikt}\in L^2(\mathbb T),
			\]
			and it converges absolutely and uniformly.
			
			\textbf{Step 3}. Construct nonzero operators \(U\) and \(V\).
			Let
			\[
			U=\sum_{k\in\mathbb Z}a_k T^k,\qquad
			V=\sum_{k\in\mathbb Z}b_k T^k.
			\]
			Since
			\[
			\sum_{k\in\mathbb Z}\|a_kT^k\|
			\le \sum_{k\in\mathbb Z}2e^{-c|k|^\theta}\|T^k\|
			\le C_1\sum_{k\in\mathbb Z}
			2e^{-c|k|^\theta+C_2|k|^\gamma},
			\]
			and \(\theta>\gamma\), there exists \(N_0\) such that for \(|k|\ge N_0\),
			\[
			C_2|k|^\gamma\le \frac c2|k|^\theta.
			\]
			Hence,
			\[
			\sum_{|k|\ge N_0}
			e^{-c|k|^\theta+C_2|k|^\gamma}
			\le \sum_{|k|\ge N_0}
			e^{-\frac c2|k|^\theta}<+\infty.
			\]
			Therefore \(\sum_{k\in \mathbb{Z}} a_kT^k\) and \(\sum_{k\in \mathbb{Z}} b_kT^k\) both converge, so \(U,V\) are well-defined.
			
			Since
			\[
			\sum_{j,k\in\mathbb Z}|a_jb_k|\|T^{j+k}\|
			\le
			\left(\sum_{j\in\mathbb Z}|a_j|\|T^j\|\right)
			\left(\sum_{k\in\mathbb Z}|b_k|\|T^k\|\right)
			<+\infty,
			\]
			we have
			\[
			UV=VU
			=\sum_{l\in\mathbb Z}
			\left(\sum_{j+k=l}a_jb_{l-j}\right)T^l.
			\]
			Note that
			\[
			\varphi(e^{it})\psi(e^{it})
			=\sum_{j,k\in\mathbb Z}a_jb_k e^{i(j+k)t},
			\]
			hence
			\[
			\widehat{(\varphi\psi)}(l)
			=\sum_{j\in\mathbb Z}a_jb_{l-j}.
			\]
			Since \(\varphi\psi=0\), we have \(\widehat{(\varphi\psi)}(l)=0\), and therefore
			\[
			UV=VU=0.
			\]
			Let
			\[
			\mathcal A=
			\left\{
			f:\ f(e^{it})=\sum_{n\in\mathbb Z}a_n e^{int},\quad
			\|f\|_{\mathcal A}
			=\sum_{n\in\mathbb Z}|a_n|\|T^n\|<+\infty
			\right\}.
			\]
			If \(f,g\in\mathcal A\), then
			\[
			(fg)(e^{it})=\sum_{n,m\in \mathbb{Z}}a_nb_m e^{i(n+m)t}
			=\sum_{k\in\mathbb Z}
			\left(\sum_{m+n=k}a_nb_m\right)e^{ikt}.
			\]
			Clearly
			\[
			\sum_{k\in\mathbb Z}
			\left|\sum_{m+n=k}a_nb_m\right|\|T^k\|
			\le
			\sum_{k\in\mathbb Z}\sum_{m+n=k}|a_nb_m|\|T^k\|
			\le
			\left(\sum_{m\in\mathbb Z}|b_m|\|T^m\|\right)
			\left(\sum_{n\in\mathbb Z}|a_n|\|T^n\|\right)
			<+\infty.
			\]
			Thus
			\[
			\|fg\|_{\mathcal A}\le \|f\|_{\mathcal A}\|g\|_{\mathcal A},
			\]
			so \(fg\in\mathcal A\). It follows that \(\mathcal A\) is a Banach algebra.
			
			Define
			\[
			\Phi:\mathcal A\to B(H),\qquad
			\Phi(f)=f(T)=\sum_{n\in\mathbb Z}a_nT^n.
			\]
			Then $\|\Phi\|\le 1$ and
			\[
			\Phi(fg)
			=\sum_{n,m}a_nb_m T^{m+n}
			=f(T)g(T)
			=\Phi(f)\Phi(g).
			\]
			For \(f(e^{it})=\sum_{n\in\mathbb Z}a_n e^{int}\in\mathcal A\), set
			\[
			P_N(e^{it})=\sum_{n=-N}^N a_n e^{int}.
			\]
			For each \(N=1,2,\ldots\), choose a trigonometric polynomial
			\(q_N\) such that
			\[
			P_N(e^{it})-P_N(\lambda)
			=(e^{it}-\lambda)q_N(e^{it}).
			\]
			Then
			\[
			(e^{it}-\lambda)q_N(e^{it})
			\longrightarrow f(e^{it})-f(\lambda)
			\]
			in the norm of \(\mathcal A\) as \(N\to\infty\).
			Consequently,
			\[
			(T-\lambda I)q_N(T)\to f(T)-f(\lambda)I
			\quad\text{in the operator norm as }N\to\infty.
			\]
			Since \(\lambda\in\sigma(T)\), \(T-\lambda I\) is not invertible. Hence for each \(N\), \((T-\lambda I)q_N(T)\) is not invertible. Since the set of non-invertible elements is closed, the limit \(f(T)-f(\lambda)I\) is also not invertible, i.e.,
			$
			f(\lambda)\in\sigma(f(T)).
			$
			Finally, since \(\varphi\) and \(\psi\) are not identically zero on \(\sigma(T)\), it follows that \(U\) and \(V\) are nonzero. 
			Hence there exist \(u,v\) such that \(Uu\neq 0\) and \(V^*v\neq 0\). Consequently,
			\[
			\sum_{n=0}^{\infty}\langle T^{-n}Uu,V^*v\rangle z^{n}
			=\sum_{n=0}^{\infty}\langle T^{-n}VUu,v\rangle z^{n}=0
			\]
			is a rational function. Therefore, by Lemma \ref{lem:3.6}, \(T^{-1}\) is intransitive. 
			In view of the definitions of \(U\) and \(V\), for \(B\in\{T^{-1}\}'=\{T\}'\), we have \(BU=UB\). Hence \(\overline{\operatorname{ran}U}\) is a nontrivial invariant subspace of \(B\). Consequently, \(T^{-1}\) possesses a nontrivial hyperinvariant subspace.

		\end{proof}

	\begin{example}
	Let	\[
	T=\begin{pmatrix}
		S_\alpha & T_{12}\\
		0 & S_\beta^*
	\end{pmatrix}\in B(H^{2}(\alpha)\oplus H^{2}(\beta))
	\]
	be invertible. If there exist \(M,b>0\), and \(0<\delta<1\) such that
	\[
	\sup_{k\ge 0}\left\{\frac{\alpha_{k+n}}{\alpha_k},\frac{\alpha_k}{\alpha_{k+n}},\frac{\beta_{k+n}}{\beta_k},\frac{\beta_k}{\beta_{k+n}}\right\}\le Me^{bn^\delta},\qquad n\in\mathbb N_0,
	\]
	then \(T\) and \(T^{-1}\) have nontrivial hyperinvariant subspaces.
\end{example}		

\begin{proof}
	Following a similar argument as in the proof of Theorem \ref{dingli00}, we conclude that \(T\) and \(T^{-1}\) have nontrivial hyperinvariant subspaces.

\end{proof}

	The following example shows that the preceding criteria
	do not apply to all conjugate bi-geometric operators.	
		
		\begin{example}\label{juxianxing}
			
			Let \(\mathcal{H}=l^2(\mathbb{N}_0)\), with canonical orthonormal basis \(\{e_n\}_{n\ge0}\).
			Let \(N_j=16^j\) and \(c_j=\dfrac{1}{\log(N_j+2)}\ (j\ge 1)\). Obviously, \(3N_j<N_{j+1}\). Set
			\[
			w_n=
			\begin{cases}
				e^{c_j}, & N_j\le n<2N_j,\\
				e^{-c_j}, & 2N_j\le n<3N_j,\\
				1, & \text{otherwise}.
			\end{cases}
			\]
			Define the operator \(Fe_n=w_n e_{n+1}\), and let \(p_0=1\), \(p_n=\prod\limits_{k=0}^{n-1} w_k\), \(P_n=\max\limits_{0\le k\le n} p_k\). Define the operators \(De_n=\dfrac{p_n^2}{P_n^2}e_n\) and
			\[
			W=\begin{pmatrix}F & D\\ 0 & F^*\end{pmatrix}.
			\]
			A direct calculation gives
			\[
			p_n=
			\begin{cases}
				e^{c_j(n-N_j)}, & N_j\le n<2N_j,\\
				e^{c_j(3N_j-n)}, & 2N_j\le n\le 3N_j,\\
				1, & \text{otherwise}.
			\end{cases}
			\]
			Thus, \(p_n\ge 1\), \(p_{N_j}=p_{3N_j}=1\), and \(p_{2N_j}=e^{c_j N_j}\).
			Since \(e^{-c_1}\le w_n\le e^{c_1}\) and \(w_n\to 1\), we have \(F\in \mathcal{B}(\mathcal{H})\) and \(F\) is bounded below.
			
			From \(p_n\le P_n\), it follows that \(0<\dfrac{p_n^2}{P_n^2}\le 1\). Note that \(De_{2N_j}=\dfrac{p_{2N_j}^2}{P_{2N_j}^{2}}e_{2N_j}=e_{2N_j}\).
			Define \(Le_n=w_n^{-1}e_{n+1}\). A direct calculation yields \(F^*L=I\) and \(L^{*}F=I\).
			
			By the definition of \(w_n\), for any \(\varepsilon>0\), only finitely many terms are not in \([e^{-\varepsilon},e^{\varepsilon}]\). Thus, there exists a constant \(C_{\varepsilon}>0\) such that \(\|F^m\|\le C_{\varepsilon} e^{\varepsilon m}\), which implies \(\|F^m\|^{\frac{1}{m}}\le C_{\varepsilon}^{\frac{1}{m}}e^{\varepsilon}\). Therefore, \(r(F)\le e^{\varepsilon}\) holds for all \(\varepsilon>0\).

			Consequently, \(r(F)\le 1\). Furthermore, since \(\prod\limits_{l=0}^{m-1}w_{k+l}\xrightarrow{k\to\infty}1\) and \(\|F^m\|=\sup\limits_{k\ge0}\prod\limits_{i=0}^{m-1}w_{k+i}\), we have \(\|F^m\|\ge1\). Hence \(r(F)=1\). Similarly, \(r(L)=1\).
			
			For \(|\lambda|<1\), \(I-\lambda L\) is invertible, and \(F^*-\lambda=F^*(I-\lambda L)\). Since \(F^*\) is surjective, \(F^*-\lambda\) is also surjective.
			If \(F^*x=\lambda x\), let \(x=\sum_{n=0}^{\infty} x_n e_n\), then \(w_n x_{n+1}=\lambda x_n\).
			Hence \(x_n=\dfrac{\lambda^n}{p_n}x_0\). Since \(p_n\ge1\), we have \(k_{\lambda}=\sum \dfrac{\lambda^n}{p_n}e_n\in \mathcal{H}\).
			Therefore, \(\ker(F^*-\lambda)=\operatorname{span}\{k_\lambda\}\).
			
			If \(\langle x, k_{\lambda}\rangle=0\) for all \(\lambda\in \mathbb{D}\), then \(\sum\limits_{n=0}^\infty \dfrac{\overline{x_n}}{p_n}\lambda^n=0\).
			This implies \(x_n=0\), and thus \(F^*\in B_1(\mathbb{D})\).
			Therefore, \(W\) is a conjugate bi-geometric operator.
			Obviously, \[\ker L^*=\operatorname{span}\{e_0\},\qquad L^{n}e_0=\dfrac{1}{p_n}e_n,\qquad F^n e_0=p_n e_n.\]
			If \(W\) is a bundle-preserving operator, then there exists \(\alpha>0\) such that
			\[
			D\left(\sum_{n=0}^\infty \frac{\lambda^n}{p_n}e_n\right)=\sum_{n=0}^\infty \alpha^{n}\lambda^n p_n e_n
			\]
			holds in a neighborhood of \(0\), which implies \(\dfrac{1}{P_n^2}=\alpha^n\).
			Furthermore, for \(0\le n\le15\), we have \[w_0=w_1=\dots=w_{15}=1, w_{16}=e^{c_1}=e^{\frac{1}{\log 18}},
			p_0=p_1=\dots=p_{15}=p_{16}=1, p_{17}=e^{c_1}=e^{\frac{1}{\log 18}}.\]
			Since \(\dfrac{1}{P_{1}^{2}}=\alpha\), we have \(\alpha=1\). However, \(\dfrac{1}{P_{17}^{2}}<1\), which contradicts \(\dfrac{1}{P_{n}^{2}}=\alpha^{n}\).
			Thus, \(W\) is not a bundle-preserving operator.
			
			For \(1\le n\le N_j\), we have \(F^n e_{N_j}=e^{n c_j}e_{N_j+n}\).
			Hence \(\|F^n\|\ge e^{n c_j}=e^{\frac{n}{\log(N_j+2)}}\).
			For \(n\ge16\), there exists \(j\) such that \(16^{j-1}<n\le16^j\).
			In this case, \(n\le N_j\) and \(N_j=16^j=16\cdot16^{j-1}<16n\).
			Therefore, \(0<\log(N_j+2)<\log(16n+2)\), and thus
			\[
			\|F^n\|\ge e^{\frac{n}{\log(16n+2)}},\quad(n\ge16).
			\]
			Therefore, \(W\) does not satisfy the conditions of Theorem 4.1.
			
			A direct calculation yields
			\[
			W^{-1}=
			\begin{pmatrix}
				L^* & -L^*DL\\
				e_0\otimes e_0 & L
			\end{pmatrix}.
			\]

			In summary, neither Theorem \ref{thm:3.7} nor Theorem \ref{dingli00} applies to this example. The arguments developed here do not determine whether \(W^{-1}\) has a nontrivial invariant subspace. This example therefore illustrates a limitation of our approach to Halmos's third problem.
				\end{example}
		
		\begin{lemma}\cite{bhowmik2016a}\label{ingatem}
			Let \(\theta : \mathbb{R}^n \to [0, \infty)\) be a decreasing radial function with \(\lim_{|y| \to \infty} \theta(y) = 0\) and
		\[
		S = \sum_{m \in \mathbb{N}} \frac{\theta(m)}{m}.
		\]
	 Let \(f \in L^1(\mathbb{T}^n)\) be such that
		\[
			|\widehat{f}(m)| \leq C e^{-\theta(m)|m|}, 
	\]
		\textit{for all \(m \in \mathbb{Z}^n\). If \(f\) vanishes on any nonempty open set \(U \subset \mathbb{T}^n\) and \(S = \infty\), then \(f(x) = 0\) for all \(x \in \mathbb{T}^n\).}
		
			\end{lemma}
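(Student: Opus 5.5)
The plan is to deduce the statement from the Denjoy--Carleman theorem. The first step is to turn the decay of the Fourier coefficients of $f$ into bounds on the derivatives of $f$ in each coordinate direction. The second is to show that these bounds define a quasianalytic class exactly because $S=\infty$. The third is to spread the vanishing of $f$ from $U$ to all of $\mathbb{T}^n$, one variable at a time.

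\textbf{Step 1 (smoothness).} Write $\omega(r)=r\theta(r)$ for $r\ge 0$. Since $\theta$ is radial, $|\widehat f(m)|\le Ce^{-\omega(|m|)}$. I would first check that $\omega(r)\to\infty$; otherwise the statement is vacuous or trivially handled, since $S=\infty$ together with $\theta$ decreasing forces $\theta(r)\ge c/\log r$ along a sequence. Then, for every multi-index $\alpha$ with $|\alpha|=k$, the Fourier series of $f$ can be differentiated term by term, and
\[
\|\partial^\alpha f\|_\infty\le C\sum_{m\in\mathbb{Z}^n}|m|^k e^{-\omega(|m|)}\le C'\sup_{r\ge 1} r^{k+n+1}e^{-\omega(r)}=:M_k .
\]
The middle inequality comes from counting lattice points in the shell $r\le|m|<r+1$, which gives $O(r^{n-1})$ points, and then absorbing the resulting sum by a factor $r^{-2}$. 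In particular $f\in C^\infty(\mathbb{T}^n)$. Moreover, every one-variable restriction $t\mapsto f(x_1,\dots,x_{i-1},t,x_{i+1},\dots,x_n)$ lies in the Carleman class $C\{M_k\}$ on $\mathbb{R}$, with constants independent of the frozen variables.

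\textbf{Step 2 (quasianalyticity; the main obstacle).} Here I must show that $C\{M_k\}$ is quasianalytic, that is, $\sum_k M_k^{-1/k}=\infty$ for the log-convex regularization of $(M_k)$. Since $\theta$ is decreasing, the sum $S$ diverges if and only if $\int_1^\infty \theta(r)\,\frac{dr}{r}=\int_1^\infty \omega(r)\,\frac{dr}{r^2}=\infty$.

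The tool is the associated function $T(r)=\sup_k r^k/M_k$, together with the Ostrowski--Mandelbrojt criterion: the class is quasianalytic if and only if $\int^\infty \log T(r)\,r^{-2}\,dr=\infty$. By the definition of $M_k$, $\log T(r)$ dominates the largest minorant of $\omega(r)-(n+1)\log r$ that is convex in $\log r$. The polynomial shift $r^{n+1}$ only changes the index $k$ by a bounded amount, and it contributes a convergent term to the integral.

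The real issue is to compare $\omega$ with its log-convex regularization. I would use the fact that $\omega(r)/r=\theta(r)$ is decreasing. This is exactly Beurling's regularity condition under which the regularized weight $\tilde\omega$ satisfies $\tilde\omega(r)\ge \omega(r/2)-O(\log r)$, or at least $\int\tilde\omega(r)r^{-2}\,dr=\infty$ whenever $\int\omega(r)r^{-2}\,dr=\infty$. The first attempt is a direct argument: for $r$ in a dyadic block, choose $k\approx\omega(r)$ and estimate $r^k/M_k$ from below using the monotonicity of $\theta$. Should that fail, the fallback is to invoke the Beurling--Malliavin/Levinson form of this equivalence.

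\textbf{Step 3 (propagation).} Choose a box $I_1\times\cdots\times I_n\subset U$. For each fixed $(x_2,\dots,x_n)\in I_2\times\cdots\times I_n$, the function $t\mapsto f(t,x_2,\dots,x_n)$ is a $2\pi$-periodic function in the quasianalytic class $C\{M_k\}$. All of its derivatives vanish at an interior point of $I_1$, so by Denjoy--Carleman it vanishes identically. Hence $f=0$ on $\mathbb{T}\times I_2\times\cdots\times I_n$. Repeating the argument in the variables $x_2,\dots,x_n$ in turn, with the same uniform bounds from Step 1, gives $f\equiv 0$ on $\mathbb{T}^n$.
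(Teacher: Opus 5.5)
The paper does not prove this lemma; it only quotes it from \cite{bhowmik2016a}. Your argument therefore has to stand on its own, and it has a genuine gap at Step~2: the hypotheses do not make $C\{M_k\}$ quasianalytic. Knowing that $\omega(r)/r=\theta(r)$ is decreasing does not carry divergence of $\int^\infty\omega(r)r^{-2}\,dr$ over to the regularization of $\omega$ that is convex in $\log r$. For that you would need something like convexity of $t\mapsto\omega(e^t)$. A merely decreasing $\theta$ lets $\omega$ have deep, very sparse dips.

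Here is a concrete example. Take $r_1=e^{10}$, $r_{j+1}=r_je^{r_j}$, and set $\theta(y)=(\log r_j)^2/r_j$ for $r_j\le|y|<r_{j+1}$ (constant for $|y|<r_1$).
\begin{itemize}
\item The hypotheses hold: $\theta$ is radial, decreasing, tends to $0$, and $S\gtrsim\sum_j(\log r_j)^2=\infty$.
\item Step~1 still works: $\omega(r)/\log r\ge\log r_j$ on $[r_j,r_{j+1})$, so you do get $f\in C^\infty$.
\item Step~2 fails. Put $K=k+n+1$ with $(\log r_j)^2\le K<(\log r_{j+1})^2$. The choice $r=Kr_j/(\log r_j)^2\in[r_j,r_{j+1})$ gives $\omega(r)=K$, hence $M_k^{1/k}\gtrsim k\,r_j/(\log r_j)^2$. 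Summing over these blocks gives $\sum_kM_k^{-1/k}\lesssim\sum_j(\log r_j)^3/r_j<\infty$.
\end{itemize}
Since $(M_k)$ is log-convex by construction, $C\{M_k\}$ is non-quasianalytic, and Step~3 has nothing to stand on. Better bookkeeping cannot fix this: $\widehat f(m)=e^{-\omega(|m|)}$ is admissible, so the hypothesis yields no essentially smaller sup-norm bounds on derivatives.

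Step~1 has a milder form of the same defect. The case $\omega\not\to\infty$ is neither vacuous nor trivial: with the same $r_j$ and $\theta=1/r_j$ on $[r_j,r_{j+1})$ one still has $S=\infty$, but $\omega\le 2$ on $[r_j,2r_j]$. Your derivative series then diverge, and $f$ need not even be continuous. That part could be repaired by first convolving with compactly supported kernels whose coefficients decay like $e^{-|m|/\log^2|m|}$. Step~2, however, cannot be repaired inside the Denjoy--Carleman framework.

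What is missing is an argument that uses the size of $\widehat f$ directly, through a logarithmic integral, rather than converting it into derivative bounds. For instance, multiply the periodic extension of $f$ by $e^{-|x|^2}$. This gives $F\in L^1(\mathbb{R}^n)$ that vanishes on an open set. Up to a constant, $\widehat F(\xi)=\sum_m\widehat f(m)e^{-|\xi-m|^2/4}$. Splitting according to $|m|\le 2|\xi|$ or not, and using that $\theta$ decreases, gives $|\widehat F(\xi)|\le C'e^{-|\xi|\theta(2|\xi|)}$. This reduces the lemma to its Euclidean Ingham-type counterpart, which in dimension one is a Beurling-type statement about $\int\log|\widehat F(\xi)|\,(1+\xi^2)^{-1}d\xi$ and needs no regularity of $\omega$.

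Your Denjoy--Carleman route is correct under the extra assumptions that $\omega(r)/\log r\to\infty$ and $t\mapsto\omega(e^t)$ is eventually convex; then $\log T(r)\ge\omega(r)-(n+2)\log r-O(1)$. This covers the only instance the paper actually uses: $n=1$ and $\theta(x)=1/\log(e+|x|)$ in Proposition~\ref{fangfafanli}, where $M_k^{1/k}\asymp k\log k$ and $\sum_kM_k^{-1/k}=\infty$.
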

		
		\begin{proposition}\label{fangfafanli}
			Suppose that
			\[
			\|T^n\|=e^{|n|^{\alpha_{|n|}}},\quad n\in \mathbb{Z},
			\]
			where
			\[
			1-\frac{\log(\log(e+|n|))}{\log|n|}\le \alpha_{|n|}<1,\quad |n|\ge 2.
			\]
			If nonzero functions $\varphi,\psi\in C^{\infty}(\mathbb{T})$ satisfy
			\[
			\sum_{n\in \mathbb{Z}}|\widehat{\varphi}(n)|\|T^{n}\|<+\infty,\quad
			\sum_{n\in \mathbb{Z}}|\widehat{\psi}(n)|\|T^{n}\|<+\infty,
			\]
			then $\varphi\psi\not \equiv 0$.
		\end{proposition}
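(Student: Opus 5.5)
We argue by contradiction and reduce the statement to the quasi-analyticity result of Lemma \ref{ingatem} with $n=1$. Suppose $\varphi\psi\equiv 0$. Both functions are nonzero and continuous, so $\varphi$ does not vanish on some nonempty open arc $I$. Then $\psi$ vanishes on $I$. The aim is to show that the summability hypothesis forces $\widehat{\psi}$ to decay like $e^{-\theta(m)|m|}$ for a decreasing radial $\theta$ with $\theta\to 0$ and $\sum_m \theta(m)/m=\infty$. Lemma \ref{ingatem} will then give $\psi\equiv0$, a contradiction.

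\textbf{Step 1: coefficient decay.} Since $\sum_n|\widehat{\psi}(n)|\,\|T^n\|<\infty$, the terms are bounded, say by $C$. Hence
\[
|\widehat{\psi}(m)|\le C e^{-|m|^{\alpha_{|m|}}}=C e^{-\theta_0(m)|m|},\qquad \theta_0(m):=|m|^{\alpha_{|m|}-1}.
\]
The lower bound on $\alpha_{|m|}$ gives, for $|m|\ge 2$,
\[
\theta_0(m)\ge |m|^{-\log\log(e+|m|)/\log|m|}=\frac{1}{\log(e+|m|)}.
\]

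\textbf{Step 2: choose $\theta$.} Set $\theta(y)=1/\log(e+|y|)$ for $|y|\ge 2$, and extend it as a constant on $|y|\le 2$ so that it stays radial and decreasing. Then $\theta\to 0$. Since $\theta\le\theta_0$ off a finite set, enlarging $C$ gives $|\widehat{\psi}(m)|\le C e^{-\theta(m)|m|}$ for all $m\in\mathbb Z$. We also have
\[
S=\sum_m \frac{1}{m\log(e+m)}=\infty
\]
by comparison with $\int dx/(x\log x)$. The same bound holds for $\widehat{\varphi}$.

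\textbf{Step 3: conclude.} Since $\psi\in C^\infty\subset L^1(\mathbb T)$ vanishes on the open arc $I$, Lemma \ref{ingatem} gives $\psi\equiv0$. This contradicts the assumption that $\psi$ is nonzero.

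\textbf{Main obstacle.} The delicate point is Step 2. One has to check that the lower bound on $\alpha_{|n|}$ is exactly what makes the majorant land on the borderline divergent rate $1/\log$. One also has to check that the monotonicity and radial hypotheses of Lemma \ref{ingatem} hold for the comparison function $\theta$, not for $\theta_0$, which need not be monotone. Replacing $\theta_0$ by the smaller function $\theta$ is legitimate because a smaller $\theta$ only weakens the decay assumption. Small indices $|m|\le 2$ are handled by enlarging the constant $C$.
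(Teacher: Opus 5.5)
Your proposal is correct and follows essentially the same route as the paper. Both arguments bound the Fourier coefficients of the factor that vanishes on an open arc by $Ce^{-|k|/\log(e+|k|)}$, using the summability hypothesis together with the lower bound on $\alpha_{|k|}$, and then apply Lemma \ref{ingatem} with $\theta(x)=1/\log(e+|x|)$, whose series $\sum_m \theta(m)/m$ diverges. The only differences are cosmetic: you apply the lemma to $\psi$ rather than $\varphi$, and you make $\theta$ constant on $|y|\le 2$. That flattening is harmless but unnecessary, since $1/\log(e+|y|)$ is already decreasing on all of $[0,\infty)$.
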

		
		\begin{proof}
		We argue by contradiction. Suppose that \(\varphi\psi\equiv 0\).
			
			From
			\[
			1>\alpha_{|k|}\ge 1-\frac{\log(\log(e+|k|))}{\log|k|},\ |k|\ge 2
			\]
			it is clear that $\alpha_{|k|}\to1$. Moreover,
			\[
			|k|^{\alpha_{|k|}}\ge \frac{|k|}{|k|^{\frac{\log(\log(e+|k|))}{\log|k|}}}=\dfrac{|k|}{\log(e+|k|)}.
			\]
			Let
			\[
			A=\sum_{k\in\mathbb Z}|a_k|\|T^k\|<+\infty,
			\]
			where \(a_k\) are the Fourier coefficients of \(\varphi\). 
			
			Without loss of generality, we may suppose that
			\[
			|a_0|\le A,\qquad |a_{\pm 1}|\le \frac{A}{e}.
			\]
				Then
			\[
			|a_k|\|T^k\|\le A
			\implies
			|a_k|\le \frac{A}{\|T^k\|}
			=A e^{-|k|^{\alpha_{|k|}}}
			\le A e^{-|k|\frac{1}{\log(e+|k|)}}.
			\]
			Let
			\[
			w(x)=\frac{1}{\log(e+|x|)},\ x\in \mathbb{R}.
			\]
			Then
			\[
			|\widehat{\varphi}(k)|= |a_k|\le A e^{-|k|w(k)},\quad k\in\mathbb Z.
			\]
			For \(w(x)=\frac{1}{\log(e+|x|)}\), obviously \(w\) is a radial function, 
			\[
			\lim_{|x|\to\infty}w(x)=0,
			\]and $w \text{ is radially strictly decreasing}.$
			Since for \(m\ge3\),
			\[
			\log(e+m)\le \log m^2=2\log m,
			\]
			we have
			\[
			\sum_{m\in\mathbb N}\frac{w(m)}{m}
			=\sum_{m=1}^{\infty}\frac{1}{m\log(e+|m|)}
			=\sum_{m=1}^{\infty}\frac{1}{m\log(e+m)}
			\ge \sum_{m=3}^{\infty}\frac{1}{2m\log m}
			=+\infty.
			\]

			Since \(\psi\not\equiv 0\) and \(\psi\) is continuous, there exists a nonempty open set \(U\subset\mathbb T\) on which \(\psi\) does not vanish. From \(\varphi\psi\equiv 0\) we obtain \(\varphi\equiv 0\) on \(U\). Hence we conclude that \(\varphi\equiv0\) by Lemma \ref{ingatem}, a contradiction.
		\end{proof}
		
		\begin{remark}
			By Proposition \ref{fangfafanli}, it is impossible to construct nonzero operators
			\[
			U=\sum_{k\in \mathbb{Z}}\widehat{\varphi}(k)T^{k},\quad
			V=\sum_{k\in \mathbb{Z}}\widehat{\psi}(k)T^{k}
			\]
			satisfying $UV=VU=0$, where \(\varphi\) and \(\psi\) are functions with disjoint supports, and
			\[
			\sum_{k\in \mathbb{Z}} \left( |\widehat{\varphi}(k)| + |\widehat{\psi}(k)| \right) < +\infty,\qquad \sum_{k\in \mathbb{Z}} \left( |\widehat{\varphi}(k)| + |\widehat{\psi}(k)| \right)\|T^{k}\| < +\infty.
			\]
		\end{remark}

 \section{Complex vector bundles and the third problem of Halmos}
 
 In this section, we shall characterize the scope of the class of operators for which Halmos's third problem has an affirmative answer.

Inspired by the work of Douglas \cite{cowen1983operators}, we employ invariants derived from the curvature of the holomorphic vector bundles $E_T$ (where $T \in B_1(\Omega)$) to study the third problem of Halmos.

 \begin{lemma}\label{4.1}
 	Let
 	\[
 	T=\begin{pmatrix}
 		A & T_{12}\\
 		0 & B
 	\end{pmatrix}\in B(\mathcal{H}_{1}\oplus \mathcal{H}_{2}),\quad
 	\tilde{T}=\begin{pmatrix}
 		\tilde{A} & \tilde{T}_{12}\\
 		0 & \tilde{B}
 	\end{pmatrix}\in B(\tilde{\mathcal{H}}_{1}\oplus \tilde{\mathcal{H}_{2}}),
 	\]
 	where $A^{*},B,\tilde{A}^{*},\tilde{B}\in B_{1}(O_{\delta})$. Let $S$ and $\tilde{S}$ be the induced operators of $T$ and $\tilde{T}$, respectively. If $S\sim_{X}\tilde{S}$, then
 	\[
 	X=\begin{pmatrix}
 		X_{1} & X_{12}\\
 		0 & X_{2}
 	\end{pmatrix},
 	\]
 	and $A\sim_{s}\tilde{A}$, $B\sim_{s}\tilde{B}$.
 \end{lemma}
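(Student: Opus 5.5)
The plan is to read off the block structure of $X$ directly from the intertwining relation. Write $S\sim_X\tilde S$ as $XS=\tilde S X$, with $X$ invertible from $\mathcal H_1\oplus\mathcal H_2$ onto $\tilde{\mathcal H}_1\oplus\tilde{\mathcal H}_2$. Here
\[
S=\begin{pmatrix} A^{*} & T_{12}\\ 0 & B^{*}\end{pmatrix},\qquad \tilde S=\begin{pmatrix} \tilde A^{*} & \tilde T_{12}\\ 0 & \tilde B^{*}\end{pmatrix}
\]
as in Definition \ref{def:2.8}, and we write $X=(X_{ij})_{i,j=1,2}$. If the paper's convention is instead $SX=X\tilde S$, the same argument applies with the roles of $S$ and $\tilde S$ exchanged.

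First we show $X_{21}=0$. Comparing the $(2,1)$ entries of $XS=\tilde S X$ gives $X_{21}A^{*}=\tilde B^{*}X_{21}$. For $w\in O_\delta$ pick a nonzero $k(w)\in\ker(A^{*}-w)$. Then
\[
(\tilde B^{*}-w)X_{21}k(w)=0 .
\]
Since $\tilde B\in B_1(O_\delta)$ and $\bar w\in O_\delta$, the operator $\tilde B-\bar w$ is surjective, so $\tilde B^{*}-w=(\tilde B-\bar w)^{*}$ is injective. Hence $X_{21}k(w)=0$ for all $w\in O_\delta$. Because $A^{*}\in B_1(O_\delta)$, the vectors $k(w)$ span a dense subspace of $\mathcal H_1$, so $X_{21}=0$. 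This gives the claimed upper triangular form, with $X_1=X_{11}$, $X_{12}$, and $X_2=X_{22}$.

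Next we compare the diagonal entries. The $(1,1)$ entry gives $X_1A^{*}=\tilde A^{*}X_1+\tilde T_{12}X_{21}=\tilde A^{*}X_1$. The $(2,2)$ entry gives $X_2B^{*}+X_{21}T_{12}=\tilde B^{*}X_2$, which reduces to $X_2B^{*}=\tilde B^{*}X_2$.

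The only step needing care is that the diagonal blocks are invertible, since an invertible upper triangular operator need not have invertible diagonal entries in general. To handle this, apply the first step to $Y=X^{-1}$, which satisfies $YS=\tilde S$'s reverse relation $Y\tilde S=SY$. The same argument, with the roles of $(A,B)$ and $(\tilde A,\tilde B)$ swapped, shows $Y$ is upper triangular as well. Reading off the diagonal blocks of $XY=I$ and $YX=I$ gives
\[
X_1Y_1=I,\quad Y_1X_1=I,\quad X_2Y_2=I,\quad Y_2X_2=I .
\]
So $X_1$ and $X_2$ are invertible, and therefore $A^{*}\sim_s\tilde A^{*}$ and $B^{*}\sim_s\tilde B^{*}$. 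Taking adjoints yields $A\sim_s\tilde A$ (via $(X_1^{*})^{-1}$) and $B\sim_s\tilde B$.
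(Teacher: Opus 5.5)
Your proof is correct and follows the paper's argument step for step. You show $X_{21}=0$ from the $(2,1)$ entry $X_{21}A^{*}=\tilde B^{*}X_{21}$ and the density of $\bigvee_{w}\ker(A^{*}-w)$, repeat this for $Y=X^{-1}$ to get invertible diagonal blocks, and then take adjoints. The paper does exactly this, except that it justifies the injectivity of $\tilde B^{*}-w$ by simply invoking $\sigma_p(\tilde B^{*})=\emptyset$, whereas you derive it directly from the surjectivity of $\tilde B-\bar w$.
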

 \begin{proof}
 	Note that
 	\[
 	S=\begin{pmatrix}
 		A^{*} & T_{12}\\
 		0 & B^{*}
 	\end{pmatrix},\quad
 	\tilde{S}=\begin{pmatrix}
 		\tilde{A}^{*} & \tilde{T}_{12}\\
 		0 & \tilde{B}^{*}
 	\end{pmatrix}.
 	\]
 	Since
 	\[
 	\begin{pmatrix}
 		X_{1} & X_{12}\\
 		X_{21} & X_{2}
 	\end{pmatrix}
 	\begin{pmatrix}
 		A^{*} & T_{12}\\
 		0 & B^{*}
 	\end{pmatrix}
 	=
 	\begin{pmatrix}
 		\tilde{A}^{*} & \tilde{T}_{12}\\
 		0 & \tilde{B}^{*}
 	\end{pmatrix}
 	\begin{pmatrix}
 		X_{1} & X_{12}\\
 		X_{21} & X_{2}
 	\end{pmatrix},
 	\]
 	we obtain $X_{21}A^{*}=\tilde{B}^{*}X_{21}$. Hence
 	$X_{21}(A^{*}-w)=(\tilde{B}^{*}-w)X_{21}$, so
 	\[
 	X_{21}\ker(A^{*}-w)\subset \ker(\tilde{B}^{*}-w).
 	\]
 	Note that $\sigma_{p}(\tilde{B}^{*})=\emptyset$.
 	Thus $X_{21}e_{A^{*}}(w)=0$, which implies $X_{21}=0$. Consequently,
 	\[
 	X=\begin{pmatrix}
 		X_{1} & X_{12}\\
 		0 & X_{2}
 	\end{pmatrix}.
 	\]
 		Let $Y=X^{-1}$. Then $SY=Y\tilde{S}$. A similar calculation shows that
 	 $Y_{21}=0$, so
 	\[
 	Y=\begin{pmatrix}
 		Y_{1} & Y_{12}\\
 		0 & Y_{2}
 	\end{pmatrix}.
 	\]
 	Since $XY=YX=I$, we have
 	\[
 	\begin{pmatrix}
 		X_{1} & X_{12}\\
 		0 & X_{2}
 	\end{pmatrix}
 	\begin{pmatrix}
 		Y_{1} & Y_{12}\\
 		0 & Y_{2}
 	\end{pmatrix}
 	=
 	\begin{pmatrix}
 		I & 0\\
 		0 & I
 	\end{pmatrix},
 	\]
 	\[
 	\begin{pmatrix}
 		Y_{1} & Y_{12}\\
 		0 & Y_{2}
 	\end{pmatrix}
 	\begin{pmatrix}
 		X_{1} & X_{12}\\
 		0 & X_{2}
 	\end{pmatrix}
 	=
 	\begin{pmatrix}
 		I & 0\\
 		0 & I
 	\end{pmatrix}.
 	\]
 	Therefore,
 	\[
 	\begin{cases}
 		X_{1}Y_{1}=Y_{1}X_{1}=I\\
 		X_{2}Y_{2}=Y_{2}X_{2}=I
 	\end{cases},
 	\]
 	so $X_{1},Y_{1},X_{2},Y_{2}$ are invertible. Since
 	\[
 	\begin{cases}
 		X_{1}A^{*}=\tilde{A}^{*}X_{1}\\
 		X_{2}B^{*}=\tilde{B}^{*}X_{2}
 	\end{cases},
 	\]
 	we obtain
 	\[
 	\begin{cases}
 		A X_{1}^{*}=X_{1}^{*}\tilde{A}\\
 		B X_{2}^{*}=X_{2}^{*}\tilde{B}
 	\end{cases}.
 	\]
 	Hence $A\sim_{s}\tilde{A}$, $B\sim_{s}\tilde{B}$.
 	\end{proof}

 \begin{theorem}\label{fujihe}
 	Let
 	\[
 	T=\begin{pmatrix}
 		A & T_{12}\\
 		0 & B
 	\end{pmatrix}\in B(\mathcal{H}_{1}\oplus \mathcal{H}_{2}),
 	\]
 	and suppose that $T^{-1}$ is intransitive. Let
 	\[
 	\tilde{T}=\begin{pmatrix}
 		\tilde{A} & \tilde{T}_{12}\\
 		0 & \tilde{B}
 	\end{pmatrix}\in B(\tilde{\mathcal{H}}_{1}\oplus \tilde{\mathcal{H}_{2}}),
 	\]
 	where $A^{*},\tilde{A}^{*},B,\tilde{B}\in B_{1}(O_{\delta})$.
 	Denote by $S_{T}$ and $S_{\tilde{T}}$ the induced operators of $T$ and $\tilde{T}$, respectively.
 	Let $\gamma(w),\tilde{\gamma}(w)$ and $\delta(w),\tilde{\delta}(w)$ be spanning sections for $\ker(S_{T}-w),\ker(S_{\tilde{T}}-w)$ and $\ker(S_{T}^{*}-w),\ker(S_{\tilde{T}}^{*}-w)$, respectively.
 	If the following conditions hold:
 	\begin{enumerate}
 		\item $\mathcal{K}_{A^{*}}(w)=\mathcal{K}_{\tilde{A}^{*}}(w)$.
 		
 		\item $\mathcal{K}_{B}(w)=\mathcal{K}_{\tilde{B}}(w)$.
 		
 		\item
 		$
 		\dfrac{\bigl|\langle\gamma(w_{1}),T_{12}\delta(\overline{w}_{2})\rangle\bigr|}
 		{\|\gamma(w_{1})\|\,\|\delta(\overline{w}_{2})\|}
 		=
 		\dfrac{\bigl|\langle\tilde{\gamma}(w_{1}),\tilde{T}_{12}\tilde{\delta}(\overline{w}_{2})\rangle\bigr|}
 		{\|\tilde{\gamma}(w_{1})\|\,\|\tilde{\delta}(\overline{w}_{2})\|},
 		$
 		for all $w,w_{1},w_{2}\in O_{\delta}$.
 	\end{enumerate}
 	then $\tilde{T}$ is invertible and $\tilde{T}^{-1}$ is intransitive.
 \end{theorem}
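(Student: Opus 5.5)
The strategy is to show that $\tilde{T}$ is unitarily equivalent to $T$. Both invertibility and intransitivity of the inverse then transfer at once: if $\tilde{T}=UTU^{*}$ with $U$ unitary, then $\tilde{T}^{-1}=UT^{-1}U^{*}$. Moreover, if $\mathcal{M}$ is a nontrivial invariant subspace of $T^{-1}$, then $U\mathcal{M}$ is a nontrivial invariant subspace of $\tilde{T}^{-1}$. We obtain this unitary equivalence through the induced operators $S_T$ and $S_{\tilde T}$ of Definition \ref{def:2.8}, using the Cowen--Douglas criterion recorded in Remark \ref{rem:2.7}.

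\textbf{Step 1 (diagonal blocks).} Since $A^{*},\tilde A^{*},B,\tilde B\in B_1(O_\delta)$, the Cowen--Douglas rigidity theorem for rank-one bundles \cite{article2} applies: curvature is a complete unitary invariant on $B_1(\Omega)$. Hypotheses (1) and (2) therefore give unitaries $U_1,U_2$ with
\[
U_1A^{*}U_1^{*}=\tilde A^{*},\qquad U_2BU_2^{*}=\tilde B .
\]
Consequently $U_2B^{*}U_2^{*}=\tilde B^{*}$. These are exactly the diagonal entries $S_1=A^{*}$, $S_2=B^{*}$ of $S_T$ and their tilde counterparts, so $S_1\sim_u\tilde S_1$ and $S_2\sim_u\tilde S_2$.

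\textbf{Step 2 (off-diagonal block).} By the remark after Definition \ref{def:2.8}, $S_T,S_{\tilde T}\in B_{1,1}(O_\delta)$. Together with hypothesis (3), this is precisely the angle condition in Remark \ref{rem:2.7}. That remark then yields a unitary $W$ with $WS_TW^{*}=S_{\tilde T}$.

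By Lemma \ref{4.1} (applied with $X=W$), $W$ is upper triangular, $W=\begin{pmatrix}W_1&W_{12}\\0&W_2\end{pmatrix}$. A unitary that is upper triangular is diagonal: $W^{*}W=I$ forces $W_1^{*}W_1=I$, and $WW^{*}=I$ forces $W_2W_2^{*}=I$. Combined with the invertibility of $W_1$ and $W_2$ from Lemma \ref{4.1}, both diagonal entries are unitary, and then $W_{12}=0$. Comparing entries in $WS_T=S_{\tilde T}W$ gives
\[
W_1A^{*}=\tilde A^{*}W_1,\qquad W_2B^{*}=\tilde B^{*}W_2,\qquad W_1T_{12}=\tilde T_{12}W_2 .
\]
Taking adjoints in the first two relations gives $W_1AW_1^{*}=\tilde A$ and $W_2BW_2^{*}=\tilde B$. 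With $U:=W_1\oplus W_2$ we conclude $UTU^{*}=\tilde T$.

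\textbf{Step 3 (conclusion).} By the intransitivity hypothesis on $T^{-1}$, $T$ is invertible, so $\tilde T=UTU^{*}$ is invertible. The first paragraph then shows $\tilde T^{-1}$ is intransitive.

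The main obstacle is Step 2, namely the correct use of Remark \ref{rem:2.7}. Two points need care. First, the sections $\gamma,\delta$ in hypothesis (3) must be those of $S_T$ and $S_T^{*}$, which is what the remark requires; the statement is set up accordingly. Second, the unitary equivalence from the remark must be compatible with the block structure, which Lemma \ref{4.1} supplies. Beyond that, we must read off that the off-diagonal relation for $S$, namely $W_1T_{12}=\tilde T_{12}W_2$, is exactly the off-diagonal relation needed for $T$. This works because $T_{12}$ occupies the same position in both $S_T$ and $T$ (the induced operator only replaces $B$ by $B^{*}$ and $A$ by $A^{*}$). That point should be checked explicitly.
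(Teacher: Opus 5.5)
Your proposal is correct and follows essentially the same route as the paper. Curvature rigidity handles the diagonal blocks, Remark \ref{rem:2.7} gives $S_T\sim_u S_{\tilde T}$, Lemma \ref{4.1} makes the intertwining unitary upper triangular, and reading off the entries yields $T\sim_u\tilde T$. The one small difference is in the diagonality step: you combine invertibility of $W_1$ (taken from the proof of Lemma \ref{4.1}) with $W^{*}W=WW^{*}=I$, whereas the paper observes that $U^{-1}=U^{*}$ is also upper triangular; these are equivalent, and that extra input is genuinely needed, since an upper-triangular unitary need not be diagonal in general.
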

 \begin{proof}
 From conditions (1) and (2), we obtain \(A^{*}\sim_u \tilde{A}^{*}\) and \(B\sim_u \tilde{B}\). Consequently, \(B^{*}\sim_u \tilde{B}^{*}\).
 Together with condition (3) and Remark \ref{rem:2.7}, we get $S_{T}\sim_{u}S_{\tilde{T}}$.
 	Since $S_{T},S_{\tilde{T}}\in B_{1,1}(O_{\delta})$, it follows from Lemma \ref{4.1} that there exists a unitary operator
 	\[
 	U=\begin{pmatrix}
 		U_{1} & U_{12}\\
 		0 & U_{2}
 	\end{pmatrix}
 	\]such that $US_{T}=S_{\tilde{T}}U$.
 	
 From the proof of Lemma \ref{4.1}, it follows that $U^{-1}$ is also an upper triangular matrix. Moreover, since $U$ is unitary, we have $U^* = U^{-1}$. Consequently, $U$ is a diagonal operator, i.e.\[
 	U=\begin{pmatrix}
 	U_{1} & 0\\
 	0 & U_{2}
 \end{pmatrix}.
 \]
 The identity $US_{T}=S_{\tilde{T}}U$ yields
 	\[
 	\begin{cases}
 		U_{1}A^{*}=\tilde{A}^{*}U_{1}\\[4pt]
 		U_{2}B^{*}=\tilde{B}^{*}U_{2}\\[4pt]
 		U_{1}T_{12}=\tilde{T}_{12}U_{2}
 	\end{cases}.
 	\]
 	Therefore, we have
 	\[
 	U_{1}A=\tilde{A}U_{1},\quad U_{2}B=\tilde{B}U_{2}.
 	\]
 	Consequently, 
 	\[
 	\begin{pmatrix}
 		U_{1} & 0\\
 		0 & U_{2}
 	\end{pmatrix}
 	\begin{pmatrix}
 		A & T_{12}\\
 		0 & B
 	\end{pmatrix}
 	=
 	\begin{pmatrix}
 		U_{1}A & U_{1}T_{12}\\
 		0 & U_{2}B
 	\end{pmatrix}
 	=
 	\begin{pmatrix}
 		\tilde{A} & \tilde{T}_{12}\\
 		0 & \tilde{B}
 	\end{pmatrix}
 	\begin{pmatrix}
 		U_{1} & 0\\
 		0 & U_{2}
 	\end{pmatrix}.
 	\]
 	Thus $T\sim_{u}\tilde{T}$, which implies that $\tilde{T}^{-1}$ is intransitive.
 	 \end{proof}

 \begin{remark}
 	The similarity of induced operators of conjugate bi-geometric operators does not imply the similarity of the original conjugate bi-geometric operators. A counterexample is as follows.
 	
 Let $e_n(z)=z^n$, $n\ge0$, be the standard orthonormal
 basis of $H^2$, and let $S=M_z$. Set  	\[
 	E_{ij}=e_i\otimes e_j,\  i,j\in \mathbb{N}_{0}, \qquad P_0=E_{00}.
 	\]
 	Take
 	\[
 	T_0=\begin{pmatrix}
 		S & P_0\\
 		0 & S^*
 	\end{pmatrix},\qquad
 	T_1=\begin{pmatrix}
 		S & P_0-2E_{01}\\
 		0 & S^*
 	\end{pmatrix}.
 	\]
 	Clearly, \(T_0\) and \(T_1\) are conjugate bi-geometric operators.
 	
 	The corresponding induced operators are
 	\[
 	S_0=\begin{pmatrix}
 		S^* & P_0\\
 		0 & S
 	\end{pmatrix},\qquad
 	S_1=\begin{pmatrix}
 		S^* & P_0-2E_{01}\\
 		0 & S
 	\end{pmatrix}.
 	\]
 	Let \(F=2(E_{11}+E_{20})\) and
 	\[
 	G=\begin{pmatrix}
 		I & F\\
 		0 & I
 	\end{pmatrix}.
 	\]
 	We have
 	\[
 	\begin{aligned}
 		G S_0 G^{-1}
 		&=\begin{pmatrix}
 			I & F\\
 			0 & I
 		\end{pmatrix}
 		\begin{pmatrix}
 			S^* & P_0\\
 			0 & S
 		\end{pmatrix}
 		\begin{pmatrix}
 			I & -F\\
 			0 & I
 		\end{pmatrix}\\[4pt]
 		&=\begin{pmatrix}
 			S^* & -S^*F+P_0+FS\\
 			0 & S
 		\end{pmatrix}\\[4pt]
 		&=S_1.
 	\end{aligned}
 	\]
 	Thus \(S_0\sim_s S_1\).
 	
 	Take \(\displaystyle h=\sum_{n=0}^{\infty}\frac{e_n}{2^n}\in H^2\). Then
 	$
 	S^*h=\frac12 h,\ (P_0-2E_{01})h=0.
 $
 	Therefore,
 	\[
 	T_1\begin{pmatrix}0\\ h\end{pmatrix}
 	=\frac12\begin{pmatrix}0\\ h\end{pmatrix},
 	\]
 	so \(\frac12\in\sigma_p(T_1)\). On the other hand, \(T_0\) is unitary, so
 	\(\sigma(T_0)\subset\mathbb T\), and therefore it is impossible that \(T_0\sim_s T_1\).

 \end{remark}

 \section{Appendix}
 
For a conjugate bi-geometric operator $T$, an alternative approach to determining whether $T^{-1}$ is intransitive is to decide whether $T^{-1}$ is similar to an upper-triangular operator matrix. Accordingly, we state the following open problem.
 
  \textbf{Problem.} Consider the operator
  \[
  T^{-1}=
  \begin{pmatrix}
  	A_L^{-1} & -A_L^{-1}T_{12}B_R^{-1}\\
  	f_0\otimes e_0 & B_R^{-1}
  \end{pmatrix},
  \]
where $A^*, B \in B_1(O_\delta)$, and the nonzero vectors
$e_0 \in \ker A^*$ and $f_0 \in \ker B$ are chosen so that
$\langle T_{12}f_0,e_0\rangle=1$.
Here $A_L^{-1}$ and $B_R^{-1}$ denote the left and right
inverses of $A$ and $B$ associated with $T$, respectively,
as specified in Lemma \ref{keni}.
  
   When is $T^{-1}$ similar to some upper‑triangular operator matrix
  $\begin{pmatrix}A_1 & B_1\\ 0 & C_1\end{pmatrix}$?
  
  Consider the conjugate bi-geometric operator
  \[
  T=
  \begin{pmatrix}
  	A & T_{12}\\
  	0 & B
  \end{pmatrix}\!\!\!\!\begin{matrix}
  M\\
  \ \ M^{\perp}
  \end{matrix}
  \]where $A^{*},B\in B_1(O_\delta)$. We obtain
  \[
  T^{-1}=
  \begin{pmatrix}
  	A_L^{-1} & -A_L^{-1}T_{12}B_R^{-1}\\
  	f_0\otimes e_0 & B_R^{-1}
  \end{pmatrix}.
  \]
  Suppose there exists an invertible upper‑triangular operator
  \[
  X=
  \begin{pmatrix}
  	X_1 & X_{12}\\
  	0 & X_2
  \end{pmatrix}
  \]
  such that
  \[
  XT^{-1}=
  \begin{pmatrix}
  	A_1 & B_1\\
  	0 & C_1
  \end{pmatrix}X.
  \]
Consequently, we have
\[
X_2 B_R^{-1} = C_1 X_2,\qquad X_2 f_0 = 0.
\]
Thus $X_2 B_R^{-1} f_0 = C_1 X_2 f_0 = 0$. By induction, it follows that
\[
X_2 (B_R^{-1})^n f_0 = 0 \quad \text{for all } n \in \mathbb N.
\]
Since $B\in B_1(O_\delta)$, we have \[\bigvee_{n=0}^{\infty}\{(B_R^{-1})^nf_0\}=M^\perp.\] Consequently, $X_2=0$, which contradicts the invertibility of $X$.

The preceding argument shows that $T^{-1}$ cannot be transformed
into an upper triangular operator matrix by an invertible operator
that is itself upper triangular with respect to the original
decomposition. This does not rule out triangularization by a
general similarity transformation.
And Theorems \ref{thm:3.7} and \ref{dingli00} provide sufficient
 conditions for the intransitivity of $T^{-1}$.

\printbibliography 
\end{document}